\newcounter{lemma}[section]
\newcounter{corollary}[section]
\newcounter{remark}[section]
\newcounter{theorem}[section]
\newcounter{proposition}[section]
\newcounter{example}
\numberwithin{equation}{section}
\begin{document}

\markboth{\centerline{E.~SEVOST'YANOV}} {\centerline{ON THE LOCAL
AND BOUNDARY BEHAVIOR ... }}

\def\cc{\setcounter{equation}{0}
\setcounter{figure}{0}\setcounter{table}{0}}

\overfullrule=0pt


\author{{EVGENY SEVOST'YANOV}\\}

\title{
{\bf ON THE LOCAL AND BOUNDARY BEHAVIOR OF MAPPINGS ON
FACTOR-SPACES}}

\date{\today}
\maketitle

\begin{abstract}
In this article, we study mappings acting between domains of two
factor spaces by certain groups of M\"{o}bius automorphisms of the
unit ball that act discontinuously and do not have fixed points. For
such mappings, we have established estimates for the distortion of
the modulus of families of paths, which are similar to the
well-known Poletsky and V\"{a}is\"{a}l\"{a} inequalities. As
applications, we have obtained several important results on the
local and boundary behavior of mappings.
\end{abstract}

\bigskip
{\bf 2010 Mathematics Subject Classification: Primary 30C65;
Secondary 32U20, 31B15}

\section{Introduction} This article is devoted to the study of mappings acting between the spaces of the orbits of elements.
It should be noted that the study of spaces of this kind has a
certain significance. First of all, we mean the Poincar\'{e} theorem
on uniformization, according to which each Riemannian surface is
conformally equivalent to a certain factor-space of a flat domain
with respect to the group of linear fractional mappings. If such a
surface has a hyperblochic type, then the role of such a domain is
played by the unit disk, and the corresponding group is a family of
fractional-linear conformal automorphisms that have no fixed points
and act discontinuously inside the disk. In order not to be limited
to only the two-dimensional case, we will consider in this article
similar mappings related to an arbitrary dimension $n\geqslant 2.$
In this case, we are dealing with the study of factor spaces with
respect to a certain group of M\"{o}bius transformations of the unit
ball onto itself.

We can also point out not so numerous, but, at the same time, highly
effective attempts to study this object. On this occasion, we
mention the classic results of Martio-Srebro, as well as the recent
works of Ryazanov-Volkov, see ~\cite{MS$_1$}-\cite{MS$_2$}
and~\cite{RV$_1$}-\cite{RV$_2$}. In the first two papers, modular
inequalities are a tool for studying the boundary behavior of
mappings defined in the whole space, while the boundary element of
space is defined in some special way. In another pair of papers, on
the contrary, the boundary extension of mappings is studied with
respect to a domain entirely lying in a given space.

\medskip
The main idea of this article is to establish modular inequalities
on orbit spaces, and with their help to study the local and boundary
behavior of maps. The main difference with respect to the
above-mentioned papers is that we study maps with branching of
arbitrary dimension, which are defined only in a certain domain and
can have an unbounded quasiconformality coefficient. In addition,
even in the two-dimensional case, the estimates of the distortion of
the modulus obtained in this article are more significant compared
with~\cite{RV$_1$}-\cite{RV$_2$}.

\medskip
We now give all the necessary definitions. Let $G$ be some group of
M\"{o}bius transformations of the unit ball ${\Bbb B}^n$ onto
itself. In what follows, the points $x$ and $y\in{\Bbb B}^n$ will be
called {\it $G$-equivalent} (or shorter, {\it equivalent}), if there
is $A\in G$ such such that $x=A(y).$ A set consisting of equivalence
classes of elements according to the indicated principle is denoted
by ${\Bbb B}^n/G.$ Denote by ${\cal{G M}}({\Bbb B}^n)$ the group of
all M\"{o}bius transformations of ${\Bbb B}^n$ onto itself.
According to~\cite[Section~3.4]{MS$_1$}, the {\it hyperbolic
measure} of the Lebesgue measurable set $A\subset{\Bbb B}^n$ is
determined by the relation
\begin{equation}\label{eq2B}
V(A)=\int\limits_A\frac{2^n\, dm(x)}{{(1-|x|^2)}^n}\,.
\end{equation}
We define the {\it hyperbolic distance} $h(x, y)$ between the points
$x, y\in{\Bbb B}^n$ by the relation
\begin{equation}\label{eq3}
h(x, y)=\log\,\frac{1+t}{1-t}\,,\quad
t=\frac{|x-y|}{\sqrt{|x-y|^2+(1-|x|^2)(1-|y|^2)}}\,,
\end{equation}
see, for example, \cite[relation~(2.18), Remark~2.12 and
Exercise~2.52]{Vu}. Note that $h(x, y)=h(g(x), g(y))$ for any $g\in
{\cal{G M}}({\Bbb B}^n),$ see~\cite[relation~(2.20)]{Vu}.
In what follows, we denote by $I$ the identity mapping in~${\Bbb
R}^n.$ According to~\cite[Section~3.4]{MS$_1$}, a set of the form
\begin{equation}\label{eq1}
P=\{p\in {\Bbb B}^n: h(p, p_0)<h(p, T(p_0))\}\,,\qquad T\in
G\setminus\{I\}\,,
\end{equation}
is called a {\it normal fundamental polyhedron} with center at the
point $p_0.$ Let $\pi:{\Bbb B}^n\rightarrow {\Bbb B}^n/G$ be the
natural projection of ${\Bbb B}^n$ onto the factor space ${\Bbb
B}^n/G,$ then the {\it hyperbolic measure} of the set $A\subset
{\Bbb B}^n/G$ is defined by the relation $V(P\cap\pi^{\,-1}(A)),$
where $P$ is a normal fundamental polyhedron of the
form~(\ref{eq1}). It is easy to verify by direct calculations that
the hyperbolic measure does not change under any map~$g\in{\cal{G
M}}({\Bbb B}^n).$

\medskip
For given elements $p_1, p_2\in {\Bbb B}^n/G,$ we set
\begin{equation}\label{eq2}
\widetilde{h}(p_1, p_2):=\inf\limits_{g_1, g_2\in G}h(g_1(z_1),
g_2(z_2))\,,
\end{equation}
where $p_i=G_{z_i}=\{\xi\in {\Bbb B}^n:\,\exists\, g\in G:
\xi=g(z_i)\},$ $i=1,2.$ In the latter case, the set $G_{x_i}$ is
called the {\it orbit} of the point $x_i,$ and $p_1$ and $p_2$ are
called the {\it orbits} of the points $z_1$ and $z_2,$ respectively.
The length of the path $\gamma:[a, b]\rightarrow {\Bbb B}^n/G$ on
the segment $[a, t],$ $a\leqslant t\leqslant b,$ is defined as
follows:
\begin{equation}\label{eq5D}
l_{\gamma}(t):=\sup\limits_{\pi}\sum\limits_{k=0}^m\widetilde{h}(\gamma(t_k),
\gamma(t_{k+1}))\,,
\end{equation}
where~$\sup$ is taken over all partitions $\pi=\{a=t_0\leqslant
t_1\leqslant t_2\leqslant\ldots\leqslant t_m=b\}.$
Let $G$ be a group of M\"{o}bius automorphisms of the unit ball. We
say that $G$ acts {\it discontinuously} on ${\Bbb B}^n,$ if each
point $x\in {\Bbb B}^n$ has a neighborhood $U$ such that $g(U)\cap
U=\varnothing$ for all $g\in G,$ except maybe a finite number of
elements $g.$ We say that $G$ does not have {\it fixed points} in $
{\Bbb B}^n,$ if for any $a\in{\Bbb B}^n$ the equality $g(a)=a$ is
possible only when $g=I.$

\medskip
Let $D$ and $D_{\,*}$ be domains on the factor spaces ${\Bbb B}^n/G$
and ${\Bbb B}^n/G_{\,*},$ respectively. Suppose that ${\Bbb B}^n/G$
and ${\Bbb B}^n/G_{\,*}$ are metric spaces with metrics
$\widetilde{h}$ and $\widetilde{h_*},$ respectively. Hereinafter,
$\widetilde{h}$ and $\widetilde{h_*}$ are determined solely by
relation~(\ref{eq2}). (It will be established below, under what
conditions on $G$ and $G_*$ the indicated functions $\widetilde{h}$
and $\widetilde{h_*}$ are really metrics). Let $ds_{\widetilde{h}}$
and $d\widetilde{h}$ be elements of length and volume on ${\Bbb
B}^n/G,$ and let $ds_{\widetilde{h_*}}$ and $d\widetilde{h_*}$ be
elements of length and volume on ${\Bbb B}^n/G_*,$ correspondingly.

\medskip
The mapping $f:D\rightarrow D_{\,*}$ will be called {\it discrete},
if the preimage $f^{-1}\left(y\right)$ of each point $y\in D_{\,*}$
consists of isolated points only. The mapping $f:D\rightarrow
D_{\,*}$ will be called {\it open}, if the image of any open set
$U\subset D$ is an open set in $D_{\,*}.$

\medskip
All further presentation of the text of the article is based on the
following fundamental fact.

\medskip
\begin{proposition}\label{pr1A}
{\sl Suppose that $G$ is a group of M\"{o}bius transformations of
the unit ball ${\Bbb B}^n,$ $n\geqslant 2,$ onto itself, acting
discontinuously and not having fixed points in ${\Bbb B}^n.$ Then
the factor space ${\Bbb B}^n/G$ is a conformal manifold, that is, a
topological manifold in which any two charts are interconnected by
means of conformal mappings. At the same time, the natural
projection $\pi,$ which maps ${\Bbb B}^n$ onto ${\Bbb B}^n/G,$ is a
local homeomorphism. Moreover, the corresponding pairs of the form
$(U, \pi^{\,-1}),$ where $U$ is some neighborhood of an arbitrary
point $p\in {\Bbb B}^n/G,$ in which the mapping $\pi^{\,-1}$ is
well-defined and continuous, can be considered as charts
corresponding to the specified manifold. }
\end{proposition}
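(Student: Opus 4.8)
The plan is to show that the two hypotheses together force the action of $G$ to be free and properly discontinuous, so that $\pi$ is a covering map, and then to transport the conformal structure of ${\Bbb B}^n$ downstairs using the fact that every element of $G$ is a Möbius, hence conformal, self-map of the ball. Throughout I would exploit the $G$-invariance of the hyperbolic metric, $h(g(x), g(y)) = h(x,y)$ for all $g\in{\cal{G M}}({\Bbb B}^n)$, so that each $g\in G$ is a hyperbolic isometry.

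The first and, I expect, the main step is to upgrade the ``all but finitely many'' discontinuity condition to genuine disjointness of translates near each point. Fix $x\in{\Bbb B}^n$ and write $B_h(x,r)=\{y\in{\Bbb B}^n: h(x,y)<r\}$. Discontinuity applied to a hyperbolic ball $U=B_h(x,r)$ shows that only finitely many $g\in G$ satisfy $g(U)\cap U\neq\varnothing$; since $g$ is an isometry, $g(U)\cap U\neq\varnothing$ forces $h(x,g(x))<2r$, so only finitely many $g$ have $h(x,g(x))<2r$. The no-fixed-points hypothesis gives $h(x,g(x))>0$ for each of these finitely many $g\neq I$, so $\rho:=\tfrac12\min_{g\neq I}h(x,g(x))>0$ is attained and positive. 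Then $B:=B_h(x,\rho)$ satisfies $g(B)=B_h(g(x),\rho)$, which is disjoint from $B$ whenever $g\neq I$ because $h(x,g(x))\geqslant 2\rho$. This is exactly where both hypotheses are genuinely used, and it is the crux of the argument; everything afterwards is essentially formal.

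Given such a ball $B$, the restriction $\pi|_B$ is injective: if $\pi(p)=\pi(q)$ with $p,q\in B$, then $q=g(p)\in g(B)\cap B$, whence $g=I$ and $p=q$. The projection $\pi$ is open, since for open $V\subset{\Bbb B}^n$ one has $\pi^{-1}(\pi(V))=\bigcup_{g\in G}g(V)$, a union of open sets. Consequently $\pi|_B\colon B\to\pi(B)$ is a continuous open bijection, i.e.\ a homeomorphism onto the open set $\pi(B)$, so $\pi$ is a local homeomorphism and the pairs $(\pi(B),(\pi|_B)^{-1})$ are well-defined charts of the required form $(U,\pi^{\,-1})$.

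Finally I would verify compatibility of charts and the manifold axioms. On the overlap of two such charts the transition map $\Phi=(\pi|_{B'})^{-1}\circ\pi|_B$ satisfies $\pi\circ\Phi=\pi$, so $\Phi(z)$ and $z$ lie in the same $G$-orbit; since orbits are discrete and closed (another consequence of discontinuity), the element $g\in G$ with $\Phi(z)=g(z)$ is locally constant, hence constant on each connected component, and there $\Phi$ coincides with a single Möbius map and is therefore conformal. Second countability of ${\Bbb B}^n/G$ is inherited from ${\Bbb B}^n$ through the open continuous surjection $\pi$. For the Hausdorff property, if $\pi(x)\neq\pi(y)$ then $x$ lies outside the closed discrete orbit $G_y$, so $\widetilde{h}(\pi(x),\pi(y))=\inf_{g\in G}h(x,g(y))=h(x,G_y)>0$, and sufficiently small balls about $x$ and $y$ have disjoint $\pi$-images. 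Combining these observations, ${\Bbb B}^n/G$ is a topological manifold whose charts $(\pi(B),(\pi|_B)^{-1})$ have conformal transition maps, i.e.\ a conformal manifold, with $\pi$ a local homeomorphism, as claimed.
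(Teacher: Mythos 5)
Your route is genuinely different from the paper's. The paper does not prove the topological assertions at all: it invokes \cite[Proposition~3.14]{Ap} for the facts that ${\Bbb B}^n/G$ is a topological manifold and that $\pi$ is a local homeomorphism, and spends its entire proof on the conformal compatibility of charts. There it argues as follows: if $\varphi=\pi_1^{-1}\circ\pi_2$ is a transition map and $g\in G$ is the element with $g(x_0)=\pi_1^{-1}(\pi(x_0)),$ then for $x$ near $x_0$ the point $g(x)$ lies in the domain of the first chart and satisfies $\pi(g(x))=\pi(x),$ so injectivity of $\pi$ on that domain forces $g(x)=\varphi(x)$; hence $\varphi$ is locally a M\"{o}bius map, hence conformal. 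You instead prove everything from scratch: disjointness of the translates of a small hyperbolic ball (your ball $B=B_h(x,\rho)$ with $\rho=\frac12\min_{g\ne I}h(x,g(x))$), openness of $\pi,$ local injectivity, Hausdorffness and second countability. Your approach is more self-contained --- it replaces the external citation by an explicit metric construction, and it correctly identifies the crux, namely that the two hypotheses together produce evenly covered neighborhoods --- at the cost of being longer and of having to verify the point-set axioms by hand.

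Two steps need repair. First, your finiteness deduction runs the implication the wrong way: from ``only finitely many $g$ satisfy $g(U)\cap U\ne\varnothing$'' together with ``$g(U)\cap U\ne\varnothing$ implies $h(x,g(x))<2r$'' one cannot conclude that only finitely many $g$ satisfy $h(x,g(x))<2r$; that gives an inclusion of the finite set \emph{into} the set you want to bound. You need the converse implication, which does hold: if $h(x,g(x))<2r,$ the hyperbolic midpoint of $x$ and $g(x)$ lies in $B_h(x,r)\cap g(B_h(x,r))$ (or, simpler, $g(x)$ itself lies in $B_h(x,2r)\cap g(B_h(x,2r)),$ so run the argument with the ball of radius $2r$ placed inside the neighborhood furnished by discontinuity). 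Second, the conformality step is under-justified: local constancy of $z\mapsto g_z,$ where $\Phi(z)=g_z(z),$ does not follow merely from ``orbits are discrete and closed,'' because the orbit --- and its separation constant --- varies with $z,$ so pointwise discreteness gives no uniform lower bound on $h(g(z),z)$ over $g\ne I$ and $z$ near $z_0.$ The clean fix is exactly the paper's device, which you already have in hand: for $z$ near $z_0,$ both $\Phi(z)$ and $g_{z_0}(z)$ lie in $B'$ (since $g_{z_0}(z_0)=\Phi(z_0)\in B'$ and $B'$ is open) and $\pi(\Phi(z))=\pi(z)=\pi(g_{z_0}(z)),$ so injectivity of $\pi|_{B'}$ gives $\Phi(z)=g_{z_0}(z).$ Alternatively, the uniform estimate $h(g(z),z)\geqslant h(g(z_0),z_0)-2h(z,z_0)\geqslant 2\rho-2h(z,z_0),$ valid for all $g\ne I$ by the definition of $\rho$ and the isometry property, does the job. (A similar remark applies to your Hausdorff argument, where closedness of orbits is asserted without proof; it is true, and is essentially what the paper later establishes in Lemma~\ref{lem5}, but in your self-contained setting it needs the same kind of convergence argument.) With these repairs your proof is complete.
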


\medskip
\begin{proof}
By virtue of~\cite[Proposition~3.14]{Ap}, the space ${\Bbb B}^n/G$
is a topological manifold, and the map $\pi$ is a local
homeomorphism of the unit ball onto  ${\Bbb B}^n/G.$

\medskip
Suppose that $U_1$ and $U_2$ are open neighborhoods in ${\Bbb
B}^n/G$ such that $U_1\cap U_2\ne\varnothing$ and, besides that,
$\pi(W_i)=U_i$ for some open sets $W_1, W_2\subset {\Bbb B}^n,$
while $\pi|_{W_i}$ is a homeomorphism, $i=1,2.$ Denote
$\pi^{\,-1}_i:=(\pi|_{W_i})^{\,-1},$ $i=1,2.$ Let
$A_i=\pi_i^{\,-1}(U_1\cap U_2),$ $i=1,2.$ To complete the proof, we
need to establish that the mapping $\varphi:=\pi_1^{\,-1}\circ\pi_2$
maps $A_2$ onto $A_1$ conformally.

\medskip
Obviously, the mapping $\varphi$ is a homeomorphism. Fix an
arbitrary point $x_0\in A_2.$ Let $\pi(x_0)=y_0\in U_1\cap U_2$ and
let $\pi^{\,-1}_1(y_0)=z_0,$ then $z_0\in A_1$ and $\pi(z_0)=y_0.$
Since $\pi(x_0)=y_0 =\pi(z_0),$ then the points $x_0\in {\Bbb B}^n$
and $z_0\in{\Bbb B}^n$ belong to the same orbit $G_{x_0}.$ Thus, the
points $x_0$ and $z_0$ are interconnected by some M\"{o}bius
transformation $g \in G,$ that is, $g(x_0)=z_0.$

\medskip
Since simultaneously $g(x_0)=z_0$ and
$\varphi(x_0)=(\pi_1^{\,-1}\circ\pi_2)(x_0)=z_0,$ then
$g(x_0)=\varphi(x_0).$ Note that the mapping $g$ coincides with
$\varphi$ not only at one point $x_0,$ but also in some of its
neighborhoods. Indeed, since $g$ is homeomorphic, it maps some open
neighborhood $B_2$ of point $x_0$ onto some open neighborhood $B_1$
of point $z_0,$ and we may assume that $B_1\subset A_1$ and
$B_2\subset A_2.$ Let $x\in B_2,$ then $g(x)=z\in B_1.$ On the other
hand, note that
$$\pi_1(z)=\pi(z)=\pi(x)=\pi_2(x)$$
by definition of the mapping $\pi,$ therefore
$z=(\pi_1^{\,-1}\circ\pi_2)(x)=\varphi(x).$ We again have that
simultaneously $g(x)=\varphi(x),$ and this holds for an arbitrary
$x\in B_2.$ See Figure~\ref{fig2} for this.
\begin{figure}[h]
\centerline{\includegraphics[scale=0.65]{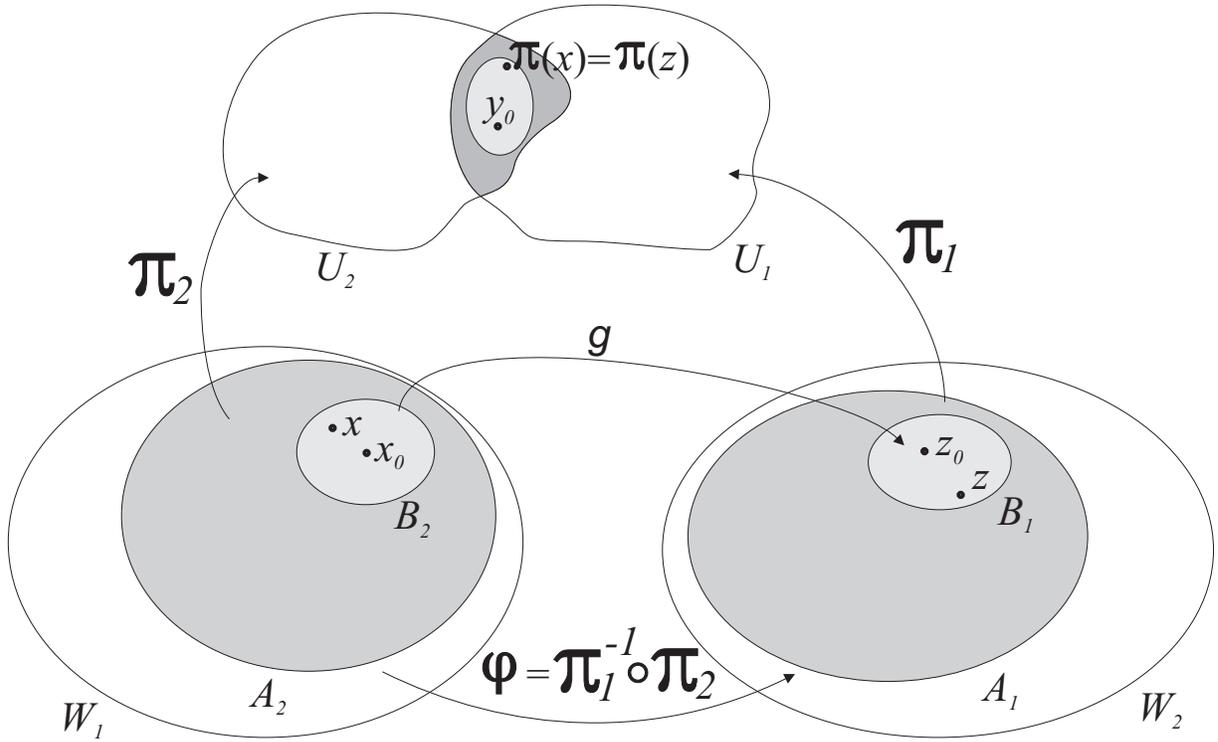}} \caption{To
the proof of Proposition~\ref{pr1A}}\label{fig2}
\end{figure}

\medskip
Finally, $\varphi$ is a homeomorphism of $A_2$ onto $A_1,$ which in
some neighborhood $B_2$ of an arbitrary point $x_0\in A_2$ coincides
with a conformal transformation. Therefore, $\varphi $ is a
conformal mapping, as required.~$\Box$
\end{proof}

\medskip
Suppose that $G$ is a group of M\"{o}bius transformations of the
unit ball ${\Bbb B}^n,$ $n\geqslant 2,$ onto itself, acting
discontinuously and not having fixed points in ${\Bbb B}^n.$

\medskip
Let $D$ and $D_{\,*}$ be domains on the factor spaces ${\Bbb B}^n/G$
and ${\Bbb B}^n/G_{\,*},$ respectively. We say that $f\in W_{\rm
loc}^{1,1}(D),$ if for each point $x_0\in D$ there are open
neighborhoods $U$ and $W,$ containing the points $x_0$ and $f(x_0),$
respectively, in which the natural projections
$\pi:(\pi^{-1}(U))\rightarrow U$ and
$\pi_*:(\pi_*^{-1}(V))\rightarrow V$ are one-to-one mappings, while
$F=\pi_*^{\,-1}\circ f\circ\pi\in W_{\rm loc}^{1,1}(\pi^{\,-1}(U)).$

\medskip
We write $f\in W_{\rm loc}^{1,p}(D),$ $p\geqslant 1,$ if $f\in
W_{\rm loc}^{1,1}(D)$ and, in addition, $\frac{\partial
f_i}{\partial x_j}\in L^p_{\rm loc}(D)$ in local coordinates. For a
given mapping $f:D\rightarrow{\Bbb R}^n,$ which is differentiable
almost everywhere in $D,$ we set
$$\Vert
f^{\,\prime}(x)\Vert=\max\limits_{|h|=1}{|f^{\,\prime}(x)h|}\,,\quad
l\left(f^{\,\prime}(x)\right)=\min\limits_{|h|=1}
{|f^{\,\prime}(x)h|}\,,\quad J(x, f)={\rm det}\,
f^{\,\prime}(x)\,.$$
The {\it inner dilatation} $K_I(x, f)$ of the mapping $f$ at the
point $x$ is defined by the relation
\begin{equation}\label{eq0.1.1}
K_I(x,f)\quad =\quad\left\{
\begin{array}{rr}
\frac{|J(x,f)|}{{l\left(f^{\,\prime}(x)\right)}^n}, & J(x,f)\ne 0,\\
1,  &  f^{\,\prime}(x)=0, \\
\infty, & \text{otherwise}
\end{array}
\right.\,.
\end{equation}
If we are talking about the mapping $f$ of domains $D$ and $D_*,$
belonging to the factor-spaces  ${\Bbb B}^n/G$ and ${\Bbb B}^n/G_*,$
respectively, then we set $K_I(p, f)=K_I(\varphi(p), F),$ where
$F=\psi\circ f\circ\varphi^{-\,1},$ $(U, \varphi)$ are local
coordinates of $x$ and $(V, \psi)$ are local coordinates of~$f(x).$
By virtue of Proposition~\ref{pr1A}, the mappings $\pi$ and $\pi_*$
can be considered as such local coordinates. By virtue of the same
statement, the definition of $K_I(p, f)$ does not depend on the
choice of local coordinates, because the inner dilatation of the
conformal mapping is equal to one.

\medskip
A {\it path} $\gamma$ on ${\Bbb B}^n/G$ is defined as a continuous
mapping $\gamma:I\rightarrow {\Bbb B}^n/G,$ where $I$ is a finite
segment, interval, or half-interval on the real axis. Let $\Gamma$
be a family of paths in ${\Bbb B}^n/G.$ A Borel function $\rho:{\Bbb
B}^n/G\rightarrow [0, \infty]$ is called an {\it admissible} for
$\Gamma,$ abbr. $\rho\in {\rm adm}\,\Gamma,$ if
$\int\limits_{\gamma}\rho(p)\,ds_{\widetilde{h}}(p)\geqslant 1$ for
each (locally rectifiable) path $\gamma\in\Gamma.$ The {\it modulus}
of $\Gamma$ is defined as follows:
$$M(\Gamma):=\inf\limits_{\rho\in {\rm adm}\,\Gamma}\int\limits_{{\Bbb
B}^n/G}\rho^n(p)\,d\widetilde{h}(p)\,.$$
Let $\Delta \subset \Bbb R$ be some open interval of the real axis,
and let $\gamma:  \Delta\rightarrow {\Bbb B}^n/G$ be a locally
rectifiable path. In this case, obviously, there is a unique
non-decreasing function of length $l_{\gamma}: \Delta\rightarrow
\Delta_{\gamma}\subset \Bbb{R}$ with the condition
$l_{\gamma}(t_0)=0,$ $t_0 \in \Delta,$ while $l_{\gamma}(t)$ is
equal to the length of  $\gamma\mid_{[t_0, t]}$ for $t>t_0,$ and to
$-\gamma\mid_ {[t,\,t_0]}$ for $t<t_0,$ $t\in \Delta.$ Let
$g:|\gamma|\rightarrow {\Bbb B}^n/G_*$ be a continuous mapping,
where $|\gamma| = \gamma(\Delta)\subset {\Bbb B}^n/G.$ Suppose that
the path $\widetilde{\gamma}=g\circ \gamma$ is also locally
rectifiable. Then, obviously, there is a unique non-decreasing
function $L_{\gamma,\,g}: \,\Delta_{\gamma} \rightarrow
\Delta_{\widetilde{\gamma}}$ such that
$L_{\gamma,\,g}(l_{\gamma}(t))\,=\,l_{\widetilde{\gamma}}(t)$  for
all $t\in\Delta.$ If $\gamma$ is defined on the segment $[a, b]$ or
the half-interval $[a, b),$ then we assume that $a=t_0.$ A path
$\gamma $ is called a {\it (total) lifting} of $\widetilde{\gamma}$
under the mapping $f:D\rightarrow {\Bbb B}^n/G_*,$ if
$\widetilde{\gamma}=f \circ \gamma.$

\medskip
Throughout, we say that a certain property $P$ holds for {\it almost
all paths,} if the modulus of the family of paths for which this
property can be violated is zero. The following definition can be
found in~\cite[определение~5.2]{Va$_3$}
or~\cite[разд.~8.4]{MRSY$_2$}. We say that the mapping $f:
D\rightarrow {\Bbb R}^n$ is {\it absolutely continuous on almost all
paths in $D$}, abbr. $f\in ACP, $ if, for almost all $\gamma$ in
$D,$ $\widetilde{\gamma}=f\circ\gamma$ is locally rectifiable and
the function $L_{\gamma,\,f}$ is absolutely continuous on all closed
segments lying in $\Delta_{\gamma}.$

\medskip
Let $f:D\rightarrow {\Bbb B}^n/G_*$ be a mapping such that no path
$\alpha$ in $D$ transforms into a point under $f.$ Note that in this
case, the inverse function is $L^{\,-1}_{\gamma,\,f}$ is
well-defined. Now, we say that $f$ is {\it absolutely continuous on
the paths in the inverse direction}, abbr. $f\in ACP^{\,-1},$ if
each lift $\gamma$ of $\widetilde{\gamma}=f\circ\gamma$ is locally
rectifiable and, in addition, $L^{-1}_{\gamma,\,f}$ is absolutely
continuous on all closed segments lying in
$\Delta_{\widetilde{\gamma}}$ for almost all paths
$\widetilde{\gamma}$ in $f(D).$

\medskip
The properties of the path to be absolutely continuous under the
mapping, as well as the property of the path to be absolutely
continuous in the preimage under $f,$ is crucial for establishing
estimates of the distortion of the modulus under $f$ see, for
example,~\cite[Lemma~5.1]{Ri} or~\cite[Lemma~6]{Pol}. If we are
talking about estimates of the distortion of the modulus from above,
then to establish them, we use, as a rule, absolute continuity of
the paths in the ''inverse'' direction, at the same time, for the
lower distortion estimates, we need to use $ACP$-property;  see, for
example, \cite[Theorems~8.5-8.6]{MRSY$_2$}. Note that if $f$ is a
homeomorphism such that $f^{\,-1}\in W_{\rm loc}^{1, n}(f(D)),$ then
$f\in ACP^{\,-1},$ see~\cite[Theorem~28.2]{Va$_3$}. We say that a
mapping $f$ has {\it $N$-Luzin property,} if
$\widetilde{h_*}(f(E))=0$ for any $E \subset D$ such that
$\widetilde{h}(E)=0.$ Similarly, we say that the mapping $f$ has
{\it $N^{\,-1}$-Luzin property,} if $\widetilde{h}(f^{\,-1}(E_*))=0$
for any $E_*\subset D_*$ such that $\widetilde{h_*}(E_*)=0.$ The
concept of a topological index on a smooth manifold, used below, can
be found, for example, in~\cite{Va$_2$}. We write $\alpha \subset
\beta$ for paths $\alpha: I\rightarrow {\Bbb B}^n/G$  and
$\beta:J\rightarrow {\Bbb B}^n/G$  if $I$ is a certain sub-interval
of the interval and $J.$ One of the main assertions of this paper is
the following theorem (see also~\cite[Lemma~3.1]{RV$_1$}).

\medskip
\begin{theorem}\label{th1}{\sl\, Suppose that $G$ and $G_*$
are two groups of M\"{o}bius transformations of the unit ball ${\Bbb
B}^n,$ $n\geqslant 2,$ onto itself, acting discontinuously on ${\Bbb
B}^n$ and not having fixed points in ${\Bbb B}^n.$ Let $D$ and
$D_{\, *}$ be domains belonging to ${\Bbb B}^n/G$ and ${\Bbb
B}^n/G_*,$ respectively, and having compact closures $\overline{D}$
and $\overline{D_*}.$ Let $I$ be open, half-open or closed finite
interval of the real axis, and let $f:D\rightarrow D_{\,*}$ be an
open discrete almost everywhere differentiable map, $f\in ACP^{\, -
1},$ having $N$ and $N^{\,-1}$-Luzin properties.

Suppose that $\Gamma $ is a family of paths in $D,$
$\Gamma^{\,\prime}$ is a family of paths in ${\Bbb B}^n/G_*$ and
$\widetilde{m}$ is some positive integer number such that the
following condition is satisfied. For each $\beta\in
\Gamma^{\,\prime},$ $\beta:I\rightarrow D_{\,*},$ there are paths
$\alpha_1,\ldots,\alpha_{\widetilde{m}}$ in $\Gamma$ such that
$f\circ \alpha_j\subset \beta$ for all $j=1,\ldots, \widetilde{m}$
and, moreover, for each fixed $p\in D$ and $t\in I,$ the equality
$\alpha_j(t)=p$ is possible no more than with $i(p, f)$ indices $j.$
Then
\begin{equation}\label{eq1A}
M(\Gamma^{\,\prime})\leqslant
\frac{1}{\widetilde{m}}\int\limits_DK_I(p,
f)\cdot\rho^n(p)\,d\widetilde{h}(p)\qquad \forall\,\,\rho\in{\rm
adm\,}\Gamma\,.
\end{equation}
}
\end{theorem}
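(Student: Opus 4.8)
The plan is to reduce this modular inequality on the orbit spaces to the corresponding Poletsky-Väisälä-type inequality on the unit ball, exploiting Proposition~\ref{pr1A} and the conformal invariance of the modulus. Because $\pi$ and $\pi_*$ are local homeomorphisms with conformal transition maps, every local quantity entering~(\ref{eq1A})---the modulus $M$, the admissible functions, the inner dilatation $K_I$, and the hyperbolic volume $d\widetilde{h}$---is defined so as to be invariant under the passage to charts. My strategy is therefore to lift the whole configuration: locally, $f$ is represented by $F=\pi_*^{\,-1}\circ f\circ\pi$, a mapping between domains of ${\Bbb B}^n$, and $K_I(p,f)=K_I(\varphi(p),F)$ by the very definition given after~(\ref{eq0.1.1}). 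The first step is to verify that the hypotheses on $f$ (open, discrete, a.e.\ differentiable, $ACP^{\,-1}$, and the two Luzin properties) transfer to the lift $F$, which is immediate since $\pi,\pi_*$ are local conformal homeomorphisms and these are all local properties preserved under composition with such maps.

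\medskip
Second, I would invoke the Kirszbraun/change-of-variables machinery underlying the classical Väisälä inequality (as in~\cite[Lemma~5.1]{Ri} or~\cite[Lemma~6]{Pol}), which in the Euclidean setting states precisely that under the multiplicity hypothesis relating $\Gamma$ and $\Gamma^{\,\prime}$ via the covering $f\circ\alpha_j\subset\beta$ with at most $i(p,f)$ coincidences, one has
\begin{equation}\label{eq:local}
M(\Gamma^{\,\prime})\leqslant\frac{1}{\widetilde{m}}\int\limits_D K_I(p,f)\,\rho^n(p)\,d\widetilde{h}(p)\,.
\end{equation}
The substance of the argument is the pointwise inequality for the modulus density: for almost every $\beta\in\Gamma^{\,\prime}$ and almost every $t$, summing the admissibility contributions of the $\widetilde{m}$ lifts $\alpha_j$ and applying the infinitesimal distortion estimate $\rho_*(\beta(t))\leqslant K_I^{1/n}(p,f)\,\rho(p)$ along each lift produces~(\ref{eq:local}) after integration. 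Here the $ACP^{\,-1}$-property guarantees the lifts are absolutely continuous so that the length integrals behave correctly, while the $N$ and $N^{\,-1}$-Luzin properties ensure the change of variables introduces no spurious sets of positive or zero measure; this is exactly where~\cite[Theorems~8.5--8.6]{MRSY$_2$} are used.

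\medskip
The main obstacle, and the only genuinely new point beyond the Euclidean theorem, is \emph{globalizing} the local inequality across the chart structure: the lifts $\alpha_j$ and the path $\beta$ are defined on the orbit spaces, but the distortion estimate is proved in charts, so one must patch together the local estimates without double-counting the multiplicity and without reference to a particular choice of fundamental polyhedron. I expect to handle this by a partition-of-unity or exhaustion argument on the compact closures $\overline{D}$, $\overline{D_*}$: cover $D$ by finitely many chart neighborhoods $U_k$ on which $\pi$ is a homeomorphism, observe that the integrand $K_I(p,f)\,\rho^n(p)\,d\widetilde{h}(p)$ is globally well-defined and independent of the chart by Proposition~\ref{pr1A} (since conformal maps have $K_I\equiv1$ and preserve hyperbolic volume), and then note that the modulus $M(\Gamma^{\,\prime})$ is an outer measure so that the local estimates assemble into the global bound. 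The multiplicity condition, phrased intrinsically through the topological index $i(p,f)$, is precisely what makes this assembly consistent with the Euclidean counting in each chart, so the passage to the quotient costs nothing beyond the invariance already recorded.
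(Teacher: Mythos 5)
Your overall plan --- lift to the ball via $\pi,\pi_*$, apply the Euclidean Poletsky--V\"ais\"al\"a machinery in charts, then globalize --- identifies the right ingredients, but the globalization step, which you yourself flag as the only genuinely new point, does not work in the form you propose, and this is a real gap rather than a technicality. The admissibility condition $\int_{\alpha_j}\rho\,ds_{\widetilde h}\geqslant 1$ is a \emph{global} condition along each lift $\alpha_j$, while any single chart sees only a piece of $\alpha_j$ on which the integral of $\rho$ may be arbitrarily small; hence the configuration $(\Gamma,\Gamma^{\,\prime},\rho)$ restricted to one chart does not satisfy the hypotheses of the Euclidean theorem, and there is no decomposition $\Gamma^{\,\prime}=\bigcup_k\Gamma^{\,\prime}_k$ into chart-supported subfamilies to which the subadditivity of the modulus could be applied. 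A partition of unity does not help either, since admissibility is not a linear condition that can be localized and summed. What is actually needed --- and what the paper does --- is to build one \emph{global} test function on $D_*$ from local data, namely $\widetilde{\rho}(p_*)=\frac{1}{\widetilde m}\sup_C\sum_{p\in C}\rho(p)/l(f_{m,i}^{\,\prime}(\varphi_i(p)))$ as in~(\ref{eq7.3.13}), and then to verify $\int_\beta\widetilde{\rho}\,ds_{\widetilde{h_*}}\geqslant 1$ by splitting the parameter interval $[0,l(\beta)]$ according to which chart $\beta$ occupies at each moment and recombining the pieces into the global inequality $\int_{\alpha_j}\rho\geqslant 1$; it is this recombination along each individual path, not outer-measure subadditivity over subfamilies, that makes the chart-by-chart estimates consistent.

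Two further points. First, your pointwise estimate $\rho_*(\beta(t))\leqslant K_I^{1/n}(p,f)\,\rho(p)$ is not the relation that drives the proof: the correct density is $\rho/l(f^{\,\prime})$, and the identity $K_I=|J|/l(f^{\,\prime})^n$ enters only through the change-of-variables formula when $\int\widetilde{\rho}^{\,n}$ is estimated over the injectivity pieces $B_k$ supplied by Proposition~\ref{pr1} (the reference to Kirszbraun is not relevant here). Second, the passage between hyperbolic and Euclidean length and volume elements is not free of charge: it produces factors $(1-r_0^2)^n(1-R_0^2)^n$ depending on the radii of the chart balls, which must be removed by a limiting argument at the end; and one must separately establish that almost every path avoids the exceptional set $B_0$ (Lemma~\ref{lem1}) and that the $f$-representations of the lifts are absolutely continuous (Lemma~\ref{lem7}, via the $ACP^{\,-1}$-property) before any length integral along a lift can be differentiated. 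None of these steps is supplied by invariance under conformal chart changes alone.
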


\medskip
By virtue of~\cite[Theorem~28.2]{Va$_3$},  \cite[Corollary~B]{MM}
and~\cite[Lemma~3]{Va$_1$}, we also have the following

\medskip
\begin{corollary}\label{cor1}
{\sl\, Suppose that $G$ and $G_*$ are two groups of M\"{o}bius
transformations of the unit ball ${\Bbb B}^n,$ $n\geqslant 2,$ onto
itself, acting discontinuously on ${\Bbb B}^n$ and not having fixed
points in ${\Bbb B}^n.$ Let $D$ and $D_{\, *}$ be domains belonging
to ${\Bbb B}^n/G$ and ${\Bbb B}^n/G_*,$ respectively, and having
compact closures $\overline{D}$ and $\overline{D_*}.$ Let $f$ be a
mapping of $D$ onto $D_*$ such that $f\in W_{\rm loc}^{1, n}(D)$ and
$f^{\,-1}\in W_{\rm loc}^{1, n}(f(D)).$ Then the
relation~(\ref{eq1A}) holds. }
\end{corollary}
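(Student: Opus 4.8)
The plan is to deduce the statement directly from Theorem~\ref{th1}: it suffices to check that, under the present hypotheses, the map $f$ fulfils all the requirements imposed on the map in Theorem~\ref{th1}, after which inequality~(\ref{eq1A}) follows with the same families $\Gamma,\Gamma^{\,\prime}$ and the same $\widetilde{m}$. Since $f^{\,-1}$ is well-defined, $f$ is a homeomorphism of $D$ onto $D_*$, and therefore it is automatically open and discrete (the preimage of each point being a single point, which is trivially a set of isolated points). Thus it remains to verify the three analytic conditions of Theorem~\ref{th1}: almost everywhere differentiability, the property $f\in ACP^{\,-1}$, and the $N$- and $N^{\,-1}$-Luzin properties.

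I would verify all of these by passing to local coordinates. By Proposition~\ref{pr1A} the projections $\pi$ and $\pi_*$ are local homeomorphisms whose transition maps are conformal, so they serve as conformal charts; in these charts $f$ is represented by $F=\pi_*^{\,-1}\circ f\circ\pi$, a homeomorphism between domains of ${\Bbb B}^n\subset{\Bbb R}^n$. By the very definition of $W_{\rm loc}^{1,n}$ on the factor space, the hypotheses $f\in W_{\rm loc}^{1,n}(D)$ and $f^{\,-1}\in W_{\rm loc}^{1,n}(f(D))$ mean precisely that $F\in W_{\rm loc}^{1,n}$ and $F^{\,-1}\in W_{\rm loc}^{1,n}$ in the corresponding Euclidean charts. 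Since $\overline{D}$ and $\overline{D_*}$ are compact, $D$ and $D_*$ can be covered by finitely many such charts.

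Now the three conditions follow from the quoted classical facts applied to $F$ and $F^{\,-1}$. Almost everywhere differentiability of the Sobolev homeomorphism $F$ is provided by~\cite[Lemma~3]{Va$_1$}; the inclusion $F^{\,-1}\in W_{\rm loc}^{1,n}$ yields $f\in ACP^{\,-1}$ by~\cite[Theorem~28.2]{Va$_3$}, as already noted in the text preceding the theorem; finally, \cite[Corollary~B]{MM} guarantees that a $W_{\rm loc}^{1,n}$ homeomorphism enjoys Luzin's condition $(N)$, which applied to $F$ gives the $N$-property of $f$ and applied to $F^{\,-1}$ gives the $N^{\,-1}$-property. With all hypotheses of Theorem~\ref{th1} verified, relation~(\ref{eq1A}) holds.

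The step I expect to be the real obstacle is the transfer of these properties from the Euclidean chart representative $F$ to the map $f$ on the factor space, where the underlying measure is the hyperbolic measure $\widetilde{h}$ and length is measured by $ds_{\widetilde{h}}$. One has to check that the conformal transition maps send sets of hyperbolic measure zero to sets of hyperbolic measure zero and carry locally rectifiable paths to locally rectifiable paths. This is legitimate because, on the compact pieces covering $D$ and $D_*$, the projections are smooth diffeomorphisms with non-vanishing Jacobian and the hyperbolic density $2^n(1-|x|^2)^{-n}$ from~(\ref{eq2B}) is smooth and bounded above and below; hence null sets and local rectifiability are preserved, and the chart-level conclusions lift to the manifold.
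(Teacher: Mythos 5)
Your proof is correct and takes essentially the same route as the paper: the paper deduces this corollary from Theorem~\ref{th1} precisely by invoking V\"{a}is\"{a}l\"{a}'s Theorem~28.2 (for the $ACP^{\,-1}$-property), Maly--Martio's Corollary~B (for the $N$- and $N^{\,-1}$-Luzin properties) and V\"{a}is\"{a}l\"{a}'s Lemma~3 (for almost everywhere differentiability), with openness and discreteness being automatic for a homeomorphism. The only difference is that you spell out the chart-transfer step (conformal transition maps, hyperbolic density bounded above and below on compacta) that the paper leaves implicit.
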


\medskip
Obviously, for $\widetilde{m}=1,$ Theorem~\ref{th1} also implies the
following

\medskip
\begin{corollary}\label{cor3}
{\sl\, Suppose that $G$ and $G_*$ are two groups of M\"{o}bius
transformations of the unit ball ${\Bbb B}^n,$ $n\geqslant 2,$ onto
itself, acting discontinuously on ${\Bbb B}^n$ and not having fixed
points in ${\Bbb B}^n.$ Let $D$ and $D_{\, *}$ be domains belonging
to ${\Bbb B}^n/G$ and ${\Bbb B}^n/G_*,$ respectively, and having
compact closures $\overline{D}$ and $\overline{D_*}.$ Let
$f:D\rightarrow D_{\,*}$ be an open discrete almost everywhere
differentiable map, $f\in ACP^{\,-1},$ having $N$ and
$N^{\,-1}$-Luzin properties. Suppose that $\Gamma$ is a family of
paths in $D.$ Then
\begin{equation}\label{eq1C}
M(f(\Gamma))\leqslant \int\limits_DK_I(p,
f)\cdot\rho^n(p)\,d\widetilde{h}(p)\qquad \forall\,\,\rho\in{\rm
adm\,}\Gamma\,.
\end{equation}
}
\end{corollary}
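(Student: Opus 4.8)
The plan is to deduce this directly from Theorem~\ref{th1} by taking $\widetilde{m}=1$ and specializing the auxiliary family $\Gamma^{\,\prime}$ to be the image family $f(\Gamma)$. First I would set $\Gamma^{\,\prime}:=f(\Gamma)=\{f\circ\gamma:\gamma\in\Gamma\}$, so that every element $\beta\in\Gamma^{\,\prime}$ is of the form $\beta=f\circ\gamma$ for some $\gamma\in\Gamma$, with $\beta:I\rightarrow D_{\,*}$. For such a $\beta$ the natural candidate lift is the single path $\alpha_1:=\gamma\in\Gamma$. Since $f\circ\alpha_1=f\circ\gamma=\beta$, and $f\circ\alpha_1$ is parametrized on the same interval as $\beta$, the relation $f\circ\alpha_1\subset\beta$ holds trivially (the sub-interval being all of $I$).

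Next I would verify the multiplicity condition required by Theorem~\ref{th1}. With $\widetilde{m}=1$ there is only one index $j=1$, so for any fixed $p\in D$ and $t\in I$ the equality $\alpha_j(t)=p$ can occur for at most one index $j$. Because $f$ is open and discrete, the topological index satisfies $i(p, f)\geqslant 1$ at every point $p\in D$, and hence the requirement ``no more than $i(p, f)$ indices $j$'' is automatically met. Consequently all the hypotheses of Theorem~\ref{th1} are satisfied for the pair $(\Gamma, \Gamma^{\,\prime})$ with $\widetilde{m}=1$.

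Finally, inserting $\widetilde{m}=1$ and $\Gamma^{\,\prime}=f(\Gamma)$ into the conclusion~(\ref{eq1A}) gives at once
$$M(f(\Gamma))\leqslant\int\limits_D K_I(p, f)\cdot\rho^n(p)\,d\widetilde{h}(p)$$
for every $\rho\in{\rm adm}\,\Gamma$, which is precisely the asserted inequality~(\ref{eq1C}). The only step calling for real attention is the inequality $i(p, f)\geqslant 1$; this is exactly where openness and discreteness of $f$ enter, ensuring that the single-lift choice is compatible with the index condition of Theorem~\ref{th1}. Everything else reduces to formal substitution into the already established modular estimate, which is why the reduction is essentially immediate.
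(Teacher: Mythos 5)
Your proposal is correct and takes essentially the same route as the paper: the paper deduces Corollary~\ref{cor3} from Theorem~\ref{th1} precisely by setting $\widetilde{m}=1$ and $\Gamma^{\,\prime}=f(\Gamma)$, a step it simply declares obvious. Your explicit verification of the lifting condition $f\circ\alpha_1\subset\beta$ and of the multiplicity bound via $i(p,f)\geqslant 1$ just fills in the routine details of that same substitution.
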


\section{Preliminaries}
Before turning to auxiliary assertions and proof of the main
results, we make some important remarks. For a point $y_0\in {\Bbb
B}^n$ and a number $r\geqslant 0$ we define the {\it hyperbolic
ball} $B_h(y_0, r)$ and the {\it hyperbolic sphere} $S_h(y_0, r)$ by
equalities
\begin{equation}\label{eq7}
B_h(y_0, r):=\{y\in {\Bbb B}^n: h(y_0, y)<r\}\,, S_h(y_0, r):=\{y\in
{\Bbb B}^n: h(y_0, y)=r\}\,. \end{equation}
We start with the following simple lemma that the exact lower bound
of a quantity in equality~(\ref{eq2}) is always achieved on a
certain pair of elements.

\medskip
\begin{lemma}\label{lem5}
{\sl Suppose that $G$ is a group of M\"{o}bius transformations of
the unit ball ${\Bbb B}^n,$ $n\geqslant 2,$ onto itself, acting
discontinuously and not having fixed points in ${\Bbb B}^n.$ Then
$\inf$ in~(\ref{eq2}) is attained on some pair of maps $f, g\in G,$
in other words, for each pair of points $p_1=G_{z_1}, p_2=G_{z_2}\in
{\Bbb B}^n/G$ and arbitrary~$z_1\in G_{z_1}, z_2\in G_{z_2}$ there
are mappings $f, g\in G$ such that
\begin{equation}\label{eq5B}\widetilde{h}(p_1, p_2)=\min\limits_{g_1, g_2\in G}h(g_1(z_1),
g_2(z_2))=h(f(z_1), g(z_2))\,.
\end{equation}
}
\end{lemma}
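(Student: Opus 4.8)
The plan is to reduce the double infimum in~(\ref{eq2}) to a single one and then to exploit the discreteness of an orbit together with the relative compactness of hyperbolic balls. First I would invoke the $G$-invariance of $h$ noted after~(\ref{eq3}): for any $g_1, g_2\in G$ one has $h(g_1(z_1), g_2(z_2))=h(z_1, (g_1^{\,-1}g_2)(z_2))$, and since $g_1^{\,-1}g_2$ runs through all of $G$ as $g_1, g_2$ do, the quantity in~(\ref{eq2}) equals $\inf_{g\in G} h(z_1, g(z_2))$. Call this value $\delta$. It then suffices to produce a single $g_0\in G$ with $h(z_1, g_0(z_2))=\delta$: the representation required in~(\ref{eq5B}) follows at once with $f=I$ and $g=g_0$, since $I\in G$ and $h(z_1,g_0(z_2))=h(I(z_1),g_0(z_2))$.

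Next I would choose a minimizing sequence $g_k\in G$ with $h(z_1, g_k(z_2))\to\delta$ and set $w_k:=g_k(z_2)$, so that $w_k\in \overline{B_h(z_1,\delta+1)}$ for all large $k$. Because the hyperbolic distance~(\ref{eq3}) tends to $\infty$ as its argument approaches $\partial{\Bbb B}^n$, this closed ball is a compact subset $K$ of ${\Bbb B}^n$. The crux is the claim that the orbit $G_{z_2}$ meets $K$ in only finitely many points. Granting the claim, the tail of $(w_k)$ takes values in the finite set $S:=G_{z_2}\cap K$; since $h(z_1, w_k)\to\delta$ while every point of $G_{z_2}$ lying outside $K$ is at distance $>\delta+1>\delta$ from $z_1$, we conclude $\delta=\min_{w\in S} h(z_1, w)$, which is attained. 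This yields the desired $g_0$.

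The step I expect to be the main obstacle is the local finiteness claim, which has to be extracted from the pointwise form of discontinuous action assumed in the paper. I would argue by contradiction: if $G_{z_2}\cap K$ were infinite, then compactness of $K$ gives distinct orbit points $w_{k_m}\to w_*\in K$; writing $w_{k_m}=g_{k_m}(z_2)$, the representing elements $g_{k_m}$ are necessarily distinct, for if $g_a(z_2)=g_b(z_2)$ then $g_b^{\,-1}g_a$ fixes $z_2$ and hence equals $I$ by the absence of fixed points, giving $g_a=g_b$. Applying the definition of discontinuous action at $w_*$ furnishes a neighborhood $U$ with $g(U)\cap U=\varnothing$ for all but finitely many $g\in G$. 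Yet infinitely many $w_{k_m}$ lie in $U$, and for any two of them the element $g_{k_b}g_{k_a}^{\,-1}$ sends $w_{k_a}\in U$ to $w_{k_b}\in U$, so $g_{k_b}g_{k_a}^{\,-1}(U)\cap U\ne\varnothing$; fixing $a$ and varying $b$ produces infinitely many distinct such elements, contradicting discontinuity. This establishes the claim and completes the argument.
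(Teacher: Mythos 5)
Your proof is correct and follows essentially the same route as the paper's: reduce the double infimum to a single one by the M\"{o}bius invariance of $h$, take a minimizing sequence, trap its tail in a compact subset of ${\Bbb B}^n$ (using that $h\to\infty$ at the boundary), and use discontinuity together with the absence of fixed points to force the infimum to be attained. If anything, your handling of the discontinuity step is slightly more careful than the paper's, since you exhibit infinitely many distinct elements $g\in G$ with $g(U)\cap U\ne\varnothing$, whereas the paper derives its contradiction from a single such element, which the stated definition of discontinuous action does not literally forbid.
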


\begin{proof}
Fix arbitrarily $p_1=G_{z_1}, p_2=G_{z_2}\in {\Bbb B}^n/G$ and
$z_1\in G_{z_1}, z_2\in G_{z_2}.$ By virtue of invariance of a
hyperbolic metric $h$ under M\"{o}bius transformations
\begin{equation}\label{eq6A}
M:=\widetilde{h}(p_1, p_2)=\inf\limits_{g\in G}h(g(z_1), z_2)\,.
\end{equation}
From~(\ref{eq6A}) it follows that there is a sequence $g_k\in G$
such that $h(g_k(z_1), z_2)\rightarrow M$ as $k\rightarrow\infty,$
where $M <\infty,$ because $M\leqslant h(z_1, z_2).$ Since
$\overline{{\Bbb B}^n}$ is a compactum in~${\Bbb R}^n,$ we may
assume that the sequence $g_k(z_1)$ converges to a certain point
$z_0\in \overline{{\Bbb B}^n},$ in this case, $z_0\in {\Bbb B}^n,$
because otherwise $h(g_k(z_1), z_2)\rightarrow\infty$ by the
definition of the function $h$ in~(\ref{eq3}). Choose an arbitrary
neighborhood $U$ of a point~$z_0,$ then $g_k(z_1)\in U$ for all
$k\geqslant k_0 $ and some $k_0=k_0(U)\in {\Bbb N}.$ Note that for
the indicated $k$ the elements $g_k(z_1)$ and $g_{k+1}(z_1)$ belong
to $U,$ and $g_{k+1}(z_1)=f_k(g_k(z_1))$ for some $f_k\in G,$ since
$g_k$ and $g_{k+1}\in G.$ Then~$g_{k+1}(z_1)\in U\cap f_k(U),$ which
contradicts the discontinuity of the group $G$ together with the
condition of the absence of fixed points, if only $f_k(z)\ne I.$
Then $g_k(z)=g(z)$ for $k\geqslant k_0$ and $g\in G.$
From~(\ref{eq6A}) it follows that~$M=h(g(z_1), z_2),$ $g\in G.$ The
lemma is proved.~$\Box$
\end{proof}

\medskip
In what follows
$$\widetilde{B}(p_0, r):=\{p\in {\Bbb B}^n/G: \widetilde{h}(p_0, p)<r\}\,,\quad \widetilde{S}(p_0, r):=\{p\in
{\Bbb B}^n/G: \widetilde{h}(p_0, p)=r\}\,.$$

\medskip
The most important element of further research is the metrizability
of the space ${\Bbb B}^n/G.$ Of course, in the most general case,
factor-spaces are not metrizable by an appropriate way, besides,
they are also not, generally speaking, manifolds
see~\cite[section~3.3]{MS$_1$}. If we are talking about a discrete
group $G,$ then such circumstances may be associated with the
presence of elliptic and parabolic elements in it, that is, with a
situation when there are elements with fixed points in the group. As
for the groups of maps acting discontinuously and not having fixed
points, we have the following statement.

\medskip
\begin{lemma}\label{lem4}
{\sl Suppose that $G$ is a group of M\"{o}bius transformations of
the unit ball ${\Bbb B}^n,$ $n\geqslant 2,$ onto itself, acting
discontinuously and not having fixed points in ${\Bbb B}^n.$ Then
the space~${\Bbb B}^n/G$ is metrizable, and the corresponding metric
can be determined by relation~(\ref{eq2}).}
\end{lemma}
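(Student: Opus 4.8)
The plan is to show that $\widetilde{h}$ defined by~(\ref{eq2}) is a genuine metric on ${\Bbb B}^n/G$, since a metric space is automatically metrizable. There are three properties to verify: symmetry, the triangle inequality, and the separation axiom $\widetilde{h}(p_1,p_2)=0\iff p_1=p_2$. Symmetry is immediate from the symmetry of $h$ and the form of the infimum in~(\ref{eq2}). For the triangle inequality, I would fix three orbits $p_1=G_{z_1}$, $p_2=G_{z_2}$, $p_3=G_{z_3}$ and use Lemma~\ref{lem5} to realize $\widetilde{h}(p_1,p_2)=h(f_1(z_1),g_1(z_2))$ and $\widetilde{h}(p_2,p_3)=h(f_2(z_2),g_2(z_3))$ on actual group elements. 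Using the invariance $h(x,y)=h(A(x),A(y))$ for $A\in G$, I can normalize both realizations so that the representative of the middle orbit $p_2$ is one and the same point $w\in G_{z_2}$; concretely, apply $g_1^{-1}$ to the first pair and $f_2^{-1}$ to the second, reducing to $\widetilde{h}(p_1,p_2)=h(g_1^{-1}f_1(z_1),z_2)$ and $\widetilde{h}(p_2,p_3)=h(z_2,f_2^{-1}g_2(z_3))$. Then the ordinary triangle inequality for $h$ on ${\Bbb B}^n$ gives $h(g_1^{-1}f_1(z_1),f_2^{-1}g_2(z_3))\leqslant\widetilde{h}(p_1,p_2)+\widetilde{h}(p_2,p_3)$, and the left side is an admissible value of $h$ over representatives of $p_1$ and $p_3$, hence bounds $\widetilde{h}(p_1,p_3)$ from above by the definition of the infimum.

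The genuinely hard part is the separation axiom: I must rule out the degenerate case where $\widetilde{h}(p_1,p_2)=0$ while $p_1\neq p_2$. Clearly $\widetilde{h}(p,p)=0$ since we may take $g_1=g_2$. Conversely, suppose $\widetilde{h}(p_1,p_2)=0$ with $p_1=G_{z_1}$, $p_2=G_{z_2}$. By Lemma~\ref{lem5} the infimum is attained, so there exist $f,g\in G$ with $h(f(z_1),g(z_2))=0$; since $h$ is a bona fide metric on ${\Bbb B}^n$, this forces $f(z_1)=g(z_2)$, whence $z_1=(f^{-1}g)(z_2)$ with $f^{-1}g\in G$, so $z_1$ and $z_2$ lie in the same orbit and $p_1=p_2$. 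This is where the hypotheses really enter: without the attainment guaranteed by Lemma~\ref{lem5} (which in turn uses discontinuity and the absence of fixed points), one could only conclude that $z_1$ is a limit of points $g_k(z_2)$ in the same orbit, and distinct orbits could accumulate on one another, destroying the separation property.

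Once the three axioms are verified, $({\Bbb B}^n/G,\widetilde{h})$ is a metric space and therefore metrizable, with the metric given precisely by~(\ref{eq2}). I should also confirm that the topology induced by $\widetilde{h}$ coincides with the quotient topology on ${\Bbb B}^n/G$ (the one for which $\pi$ is continuous), so that ``metrizable'' is meant in the honest topological sense rather than merely asserting the existence of some metric on the underlying set. For this I would invoke Proposition~\ref{pr1A}: $\pi$ is a local homeomorphism, so small $\widetilde{h}$-balls around $p_0=G_{z_0}$ pull back under a local inverse of $\pi$ to honest hyperbolic balls $B_h(z_0,r)$ for $r$ below the injectivity radius, matching the quotient-topology neighborhood basis. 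The main obstacle throughout is the interplay between the infimum in~(\ref{eq2}) and the group action, and the key tool that resolves it is the attainment of that infimum supplied by Lemma~\ref{lem5}.
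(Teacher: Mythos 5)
Your proposal is correct and follows essentially the same route as the paper's own proof: verify the metric axioms for $\widetilde{h}$ using the attainment of the infimum from Lemma~\ref{lem5} (including the same normalization of the middle orbit's representative for the triangle inequality and the same argument for separation), and then identify the $\widetilde{h}$-topology with the quotient topology via Proposition~\ref{pr1A} by matching small $\widetilde{h}$-balls with hyperbolic balls under a local inverse of $\pi$. No substantive differences to report.
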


\medskip
\begin{proof}
We show that $\widetilde{h}$ in~(\ref{eq2}) is a metric on~${\Bbb
B}^n/G.$ First of all,  $\widetilde{h}(p_1, p_2)\geqslant 0$ for all
$p_1, p_2\in {\Bbb B}^n/G.$ Suppose now that $\widetilde{h}(p_1,
p_2)=0.$ Then, by Lemma~\ref{lem5} there are $z_1,z_2\in {\Bbb B}^n$
and $f, g\in G$ such that
$$\widetilde{h}(p_1,
p_2)=h(f(z_1), g(z_2))=0\,.$$ Then $z_1=(f^{\,-1}\circ g)(z_2),$
i.e., the points $z_1$ and $z_1$ belong to the same orbit, whence
$p_1=p_2.$

The symmetry of the function $\widetilde{h}$ in~(\ref{eq2}) is
obvious. We show the triangle inequality.  Let $p_1, p_2, p_3\in
{\Bbb B}^n/G.$ Then, by Lemma~\ref{lem5}, there are $g_1, g_2\in G,$
such that
$\widetilde{h}(p_1, p_2)=h(g_1(z_1), z_2)$ and $\widetilde{h}(p_2,
p_3)=h(z_2, g_2(z_3)),$ where points $z_1,$ $z_2$ and $z_3$ belong
to the orbits $p_1,$ $p_2$ and $p_3,$ respectively. Then, by the
triangle inequality for the metric $h,$ we have:
$$\widetilde{h}(p_1, p_3)=\inf\limits_{f_1, f_2\in G}h(f_1(z_1), f_2(z_3))\leqslant$$
$$\leqslant h(g_1(z_1), g_2(z_3))\leqslant h(g_1(z_1), z_2)+h(z_2, g_2(z_3))=$$
$$=\widetilde{h}(p_1, p_2)+\widetilde{h}(p_2, p_3)\,.$$

\medskip
It remains to show that the metric $\widetilde{h}$ generates the
same topology on~${\Bbb B}^n/G$ as the original topological
space~${\Bbb B}^n/G.$ By Proposition~\ref{pr1A}, it suffices to show
that the metric space~$({\Bbb B}^n/G, \widetilde{h})$ is locally
homeomorphic to~${\Bbb R}^n.$ By Proposition~\ref{pr1A}, every point
$p_0\in {\Bbb B}^n/G$ has a neighborhood $U$ and a homeomorphism
$\varphi$ such that $\varphi(U)=W$ and $W$ is open in~${\Bbb R}^n.$
By the same proposition, we may take~$\varphi:=(\pi^{\,-1})|_{U},$
$\pi^{\,-1}:U\rightarrow W.$ Let $\varphi(p_0)=x_0\in W.$ Choose
$r_0>0$ so that $B_h(x_0, r_0)\subset U.$ Then
$$\varphi(\widetilde{B}(p_0, r))=B_h(x_0, r)\,,$$
whence it follows that the ball~$\varphi(\widetilde{B}(p_0, r))$ is
an open set in ${\Bbb B}^n/G$ in the sense of topology~${\Bbb
B}^n/G.$ The lemma is proved. ~$\Box$
\end{proof}

Given $z_1, z_2\in {\Bbb B}^n,$ we set
\begin{equation}\label{eq5}
d(z_1, z_2):=\widetilde{h}(\pi(z_1), \pi(z_2))\,,
\end{equation}
where $\widetilde{h}$ is defined in~(\ref{eq2}). Note that, by the
definition, $d(z_1, z_2)\leqslant h(z_1, z_2).$ The following
statement is true.

\medskip
\begin{lemma}\label{lem6}
{\sl Suppose that $G$ is a group of M\"{o}bius transformations of
the unit ball ${\Bbb B}^n,$ $n\geqslant 2,$ onto itself, acting
discontinuously and not having fixed points in ${\Bbb B}^n.$ Then
for any compact set $A\subset {\Bbb B}^n$ there is
$\delta=\delta(A)>0$ such that
\begin{equation}\label{eq34}
d(z_1, z_2)=h(z_1, z_2), \quad \forall\,\, z_1, z_2\in A: h(z_1,
z_2)<\delta\,.
\end{equation}
}
\end{lemma}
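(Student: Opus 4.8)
The plan is to reduce the assertion to a single uniform estimate: there is a constant $\varepsilon_0=\varepsilon_0(A)>0$ such that
$$h(g(z), z)\geqslant \varepsilon_0\qquad \forall\,z\in A,\ \ \forall\,g\in G\setminus\{I\}\,. \eqno{(*)}$$
Granting $(*)$, I would set $\delta:=\varepsilon_0/2$ and argue as follows. Fix $z_1, z_2\in A$ with $h(z_1, z_2)<\delta.$ By the invariance of $h$ under M\"obius transformations, $d(z_1, z_2)=\inf_{g\in G}h(g(z_1), z_2),$ and by Lemma~\ref{lem5} this infimum is attained at some $g\in G.$ For $g=I$ the value is exactly $h(z_1, z_2).$ For any $g\in G\setminus\{I\},$ the triangle inequality together with $(*)$ gives
$$h(g(z_1), z_2)\geqslant h(g(z_1), z_1)-h(z_1, z_2)> \varepsilon_0-\delta=\varepsilon_0/2=\delta> h(z_1, z_2)\,.$$
Hence the minimizer is $g=I,$ so $d(z_1, z_2)=h(z_1, z_2),$ which is~(\ref{eq34}).

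It remains to prove $(*)$, and this is the main point. I would argue by contradiction. If $(*)$ fails for every $\varepsilon_0>0,$ then there exist sequences $z_k\in A$ and $g_k\in G\setminus\{I\}$ with $h(g_k(z_k), z_k)\to 0$ as $k\to\infty.$ Since $A$ is compact in ${\Bbb B}^n,$ passing to a subsequence we may assume $z_k\to z_0\in A.$ Because $A$ lies in a compact subset of ${\Bbb B}^n$ (so it is bounded away from $\partial{\Bbb B}^n$), on a fixed hyperbolic neighborhood of $A$ the metric $h$ from~(\ref{eq3}) and the Euclidean one induce the same topology; hence $h(g_k(z_k), z_k)\to 0$ forces $|g_k(z_k)-z_k|\to 0,$ and therefore $g_k(z_k)\to z_0$ as well.

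Now I would invoke the discontinuity of $G$ at the point $z_0$: there is a neighborhood $U$ of $z_0$ with $g(U)\cap U=\varnothing$ for all $g\in G$ outside a finite subset. For all sufficiently large $k$ both $z_k\in U$ and $g_k(z_k)\in U;$ since $g_k(z_k)\in g_k(U),$ we obtain $g_k(z_k)\in g_k(U)\cap U\ne\varnothing,$ so each such $g_k$ belongs to that finite exceptional set. Consequently the $g_k$ take only finitely many values for large $k,$ and after passing to a further subsequence we may assume $g_k\equiv g$ for a fixed $g\in G\setminus\{I\}.$ Passing to the limit in $h(g(z_k), z_k)\to 0$ and using the continuity of $g$ and of $h$ yields $h(g(z_0), z_0)=0,$ i.e. $g(z_0)=z_0.$ This contradicts the absence of fixed points, since $g\ne I.$ Thus $(*)$ holds, and the lemma follows.

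The decisive difficulty is precisely the uniformity in $(*)$: the discontinuity hypothesis provides, for each individual point, only a neighborhood avoiding all but finitely many group elements, and both the exceptional elements and the size of the neighborhood may vary from point to point. The compactness of $A$ together with the extraction argument above is exactly what converts this pointwise information into the required uniform lower bound, after which the conclusion is immediate from the triangle inequality and Lemma~\ref{lem5}.
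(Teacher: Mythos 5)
Your proof is correct and follows essentially the same route as the paper's: a contradiction argument in which compactness of $A$ yields convergent sequences, the discontinuity of $G$ forces the group elements involved into a finite exceptional set, and passing to a constant subsequence produces a fixed point of some $g\ne I$, contradicting the hypothesis. The only difference is organizational: you first isolate the uniform displacement bound $(*)$ and then deduce the lemma via the triangle inequality and Lemma~\ref{lem5}, whereas the paper negates~(\ref{eq34}) directly and works with pairs $x_k, z_k\in A$ and elements $g_k\ne I$ satisfying $h(z_k, g_k(x_k))<h(z_k,x_k)<1/k$; your writeup also spells out the finite-exceptional-set and subsequence step that the paper compresses into the single closing sentence of its proof.
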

\begin{proof}
Suppose the contrary. Then for an arbitrary $k\in {\Bbb N}$ there
are $x_k, z_k\in A$ such that $h(z_k, x_k)<1/k$ and, moreover,
$d(z_k, x_k)<h(z_k, x_k).$ Thus, by the definition of the metric $d$
and the invariance of the metric $h$ under M\"{o}bius
transformations of the unit ball onto itself, there exists $g_k\in
G,$ $g_k\ne I,$ such that
\begin{equation}\label{eq37}
d(z_k, x_k)\leqslant h(z_k, g_k(x_k))<h(z_k, x_k)<1/k,\quad g_k\in
G, \quad k=1,2,\ldots \,.
\end{equation}
Since $A$ is a compact in~${\Bbb B}^n,$ we may assume that $x_k,
z_k\rightarrow x_0\in {\Bbb B}^n$ as $k\rightarrow\infty.$ In this
case, from~(\ref{eq37}) by the triangle inequality we obtain that
$h(g_k(x_k), x_0)\leqslant h(g_k(x_k), z_k)+h(z_k, x_k)\rightarrow
0$ as $k\rightarrow\infty,$ and, therefore, $h(x_k,
g_k^{\,-1}(x_0))\rightarrow 0$ as $k\rightarrow\infty,$ since the
metric $h$ is invariant under M\"{o}bius transformations. But then
also, by the triangle inequality, $h(g_k^{\,-1}(x_0), x_0)\leqslant
h(g_k^{\,-1}(x_0), x_k)+h(x_k, x_0)\rightarrow 0,$
$k\rightarrow\infty.$ The latter contradicts the discontinuity of
the group $G$ in ${\Bbb B}^n$ together with the absence of fixed
points, which proves~(\ref{eq34}). ~$\Box$
\end{proof}

\medskip
In many cases, it is important to have estimates of the hyperbolic
distance through the Euclidean distance. In some cases, such
estimates are more or less obvious, in some they require significant
maintenance. To achieve clarity in this matter, we establish the
following assertion.

\medskip
\begin{lemma}\label{lem3}
{\sl Let $0<2r_0<1,$ then there is $C_1=C_1(r_0)>0$ such that
\begin{equation}\label{eq9B}
C_1\cdot h(z_1, z_2)\leqslant |z_1-z_2|\leqslant  h(z_1,
z_2)\quad\forall\,\, z_1, z_2\in B(0, r_0)\,. \end{equation}
Moreover, the right-hand inequality in~(\ref{eq9B}) holds for all
$z_1, z_2\in {\Bbb B}^n.$ }
\end{lemma}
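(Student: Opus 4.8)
The plan is to reduce everything to the one-variable function $t\mapsto\log\frac{1+t}{1-t}$ and to control the quantity
$t=t(z_1,z_2)=\frac{|z_1-z_2|}{\sqrt{|z_1-z_2|^2+(1-|z_1|^2)(1-|z_2|^2)}}$ appearing in~(\ref{eq3}). Throughout I write $s=|z_1-z_2|$ and $D=(1-|z_1|^2)(1-|z_2|^2)\in[0,1]$, so that $t=s/\sqrt{s^2+D}$ and $h(z_1,z_2)=\log\frac{1+t}{1-t}=2\operatorname{artanh}t$. The two inequalities are then estimates for $\operatorname{artanh}t$ from below and from above, together with the comparison of $t$ and $s$.

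\textbf{Right-hand inequality (on all of ${\Bbb B}^n$).} First I would recall that $h$ is exactly the geodesic distance of the density $ds_h=\frac{2\,|dx|}{1-|x|^2}$ on ${\Bbb B}^n$ (the normalization behind~(\ref{eq3}); cf.~\cite[relation~(2.18)]{Vu}), which one verifies on a radius since $h(0,x)=\int_0^{|x|}\frac{2\,dr}{1-r^2}=\log\frac{1+|x|}{1-|x|}$. As $\frac{2}{1-|x|^2}\geqslant 2$ for every $x\in{\Bbb B}^n$, any rectifiable path $\gamma$ joining $z_1$ to $z_2$ satisfies $\int_\gamma\frac{2\,|dx|}{1-|x|^2}\geqslant 2\int_\gamma|dx|\geqslant 2|z_1-z_2|$, and taking the infimum over $\gamma$ gives $h(z_1,z_2)\geqslant 2|z_1-z_2|\geqslant|z_1-z_2|$. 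One may also avoid the metric language: since $D\leqslant 1$ we have $t\geqslant s/\sqrt{s^2+1}$, whence by monotonicity $h\geqslant 2\operatorname{arcsinh}s$, and the elementary inequality $2\operatorname{arcsinh}s\geqslant s$ on the admissible range $s\in[0,2)$ finishes the bound.

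\textbf{Left-hand inequality (on $B(0,r_0)$).} Here I must bound $h$ from above by a multiple of $s$, and the only danger is that $t$ approaches $1$. For $z_1,z_2\in B(0,r_0)$ one has $1-|z_i|^2\geqslant 1-r_0^2$, hence $D\geqslant(1-r_0^2)^2$ and $s\leqslant 2r_0$. Since $t=s/\sqrt{s^2+D}$ increases in $s$ and decreases in $D$, the key computation is
\[
t\leqslant\frac{2r_0}{\sqrt{4r_0^2+(1-r_0^2)^2}}=\frac{2r_0}{\sqrt{(1+r_0^2)^2}}=\frac{2r_0}{1+r_0^2}=:t_0<1,
\]
the identity $4r_0^2+(1-r_0^2)^2=(1+r_0^2)^2$ being exactly what keeps $t_0$ strictly below $1$. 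Now $\operatorname{artanh}$ is convex on $[0,1)$ and vanishes at $0$, so $\operatorname{artanh}t\leqslant\frac{\operatorname{artanh}t_0}{t_0}\,t$ for $t\in[0,t_0]$; combining this with $t\leqslant s/\sqrt{D}\leqslant s/(1-r_0^2)$ gives
\[
h(z_1,z_2)=2\operatorname{artanh}t\leqslant\frac{2\operatorname{artanh}t_0}{t_0\,(1-r_0^2)}\,|z_1-z_2|.
\]
Thus the left-hand inequality holds with $C_1:=\frac{t_0(1-r_0^2)}{2\operatorname{artanh}t_0}>0$, a constant depending only on $r_0$.

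\textbf{Main obstacle.} The only genuinely delicate point is securing $t_0<1$, i.e.\ that for points confined to $B(0,r_0)$ the parameter $t$ stays bounded away from $1$; this is where the hypothesis $2r_0<1$ (keeping $z_1,z_2$ well inside the ball) is used. Once this is in hand, the passage from $t$ to $\operatorname{artanh}t$ and then to $s$ is routine monotonicity and convexity. If I opted for the purely algebraic route in the right-hand (lower) estimate, I would additionally check $2\operatorname{arcsinh}s\geqslant s$ on $[0,2)$, which is a one-variable verification; the metric argument avoids even that.
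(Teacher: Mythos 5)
Your proof is correct, and on the harder half of the lemma it is actually tighter than the paper's. Both arguments reduce everything to the quantity $t=s/\sqrt{s^2+D}$, $s=|z_1-z_2|$, $D=(1-|z_1|^2)(1-|z_2|^2)$, but the executions differ. For the right-hand inequality the paper bounds $s^2+D\leqslant 4$, so that $t\geqslant s/2$ and $h\geqslant\log\frac{1+s/2}{1-s/2}$, and then checks $\log\frac{1+s/2}{1-s/2}-s\geqslant 0$ by differentiation; your algebraic variant ($D\leqslant 1$, hence $h\geqslant 2\operatorname{arcsinh}s\geqslant s$, the latter needing the endpoint check you acknowledge, since $2\operatorname{arcsinh}s-s$ is not monotone up to $s=2$) is parallel to this, while your metric argument (hyperbolic density $\geqslant 2$, hence $h\geqslant 2|z_1-z_2|$) is genuinely different: it imports from \cite{Vu} the identification of~(\ref{eq3}) with the Poincar\'e geodesic distance, and in exchange gives the stronger bound $h\geqslant 2s$ with no computation. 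For the left-hand inequality your route is cleaner than the paper's. The paper estimates $\sqrt{s^2+D}\geqslant 1-r_0^2$, writes $h\leqslant\log\frac{1-r_0^2+s}{1-r_0^2-s}$, and extracts linearity in $s$ from the asymptotics as $s\to 0$ together with boundedness on a compact $s$-interval; that argument needs $s<1-r_0^2$ for the logarithm to make sense, which is automatic only for $r_0\leqslant\sqrt{2}-1$, so for $r_0\in(\sqrt{2}-1,1/2)$ the paper's bound is vacuous for $s$ near $2r_0$ (moreover its intermediate Cauchy--Bunyakovsky step, which as written asserts $(z_1,z_2)\leqslant|z_1|^2|z_2|^2$, fails e.g. at $z_1=z_2\ne 0$, although the conclusion $\sqrt{s^2+D}\geqslant 1-r_0^2$ is correct via $(1-|z_1||z_2|)^2\geqslant(1-r_0^2)^2$). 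You sidestep all of this: the identity $4r_0^2+(1-r_0^2)^2=(1+r_0^2)^2$ pins down $t\leqslant t_0=2r_0/(1+r_0^2)<1$ exactly, and the chord bound for the convex function $\operatorname{artanh}$ then yields the explicit constant $C_1=t_0(1-r_0^2)/(2\operatorname{artanh}t_0)$. One small correction: you say the hypothesis $2r_0<1$ is what secures $t_0<1$, but $t_0<1$ is equivalent to $(1-r_0)^2>0$, so your argument in fact works for every $r_0<1$; the restriction $2r_0<1$ is simply not needed at that point, which only strengthens your version of the lemma.
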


\medskip
\begin{proof}
Note that by the triangle inequality $0<|z_1-z_2|<r_0.$ Therefore,
$r:=|z_1-z_2|$ varies from $0$ to $2r_0<1.$ Recall that
$$h(z_1,
z_2)=\log\frac{1+\frac{|z_1-z_2|}{\sqrt{|z_1-z_2|^2+(1-|z_1|^2)(1-|z_2|^2)}}}{1-
\frac{|z_1-z_2|}{\sqrt{|z_1-z_2|^2+(1-|z_1|^2)(1-|z_2|^2)}}}\,.$$
Through direct computation, we make sure that
$|z_1-z_2|^2+(1-|z_1|^2)(1-|z_2|^2)\leqslant 4$ for all $z_1, z_2\in
{\Bbb B}^n$ and, consequently, $h(z_1, z_2)\geqslant
\log\frac{1+r/2}{1-r/2},$ where $r=|z_1-z_2|.$ Note that
\begin{equation}\label{eq8C}
h(z_1, z_2)\geqslant \log\frac{1+r/2}{1-r/2}\geqslant r\,,\quad r\in
(0, 1)\,.
\end{equation}
By taking a derivative, it can be shown that the function
$\varphi(r)=\log\frac{1+r/2}{1-r/2}-r$ increases by $r\in [0, 1].$
Hence, its minimum is reached at $r=0,$ that is,
$\varphi(r)\geqslant 0$ for all $r\in (0, 1).$ Thus,
inequality~(\ref{eq8C}) is established.

Now we establish the left inequality in~(\ref{eq9B}). Considering
Cauchy-Bunyakovsky inequality, note that
$$|z_1-z_2|^2+(1-|z_1|^2)(1-|z_2|^2)=|z_1|^2|z_2|^2-2(z_1,
z_2)+1\geqslant$$
$$
\geqslant 1-|z_1|^2|z_2|^2\geqslant 1-r_0^4\geqslant 1-r_0^2$$
for $z_1, z_2 \in B(0, r_0).$ It follows that
$$\sqrt{|z_1-z_2|^2+(1-|z_1|^2)(1-|z_2|^2)}\geqslant 1-r_0^2\,.$$
Hence $h(z_1, z_2)\leqslant \log\frac{1-r^2_0+r}{1-r^2_0-r}$
for~$z_1, z_2\in B(0, r_0),$ moreover,
$\log\frac{1-r^2_0+r}{1-r^2_0-r}\sim \frac{2}{1-r^2_0}\cdot r$ as
$r\rightarrow 0.$ Then
$$h(z_1, z_2)\leqslant
\log\frac{1-r^2_0+r}{1-r^2_0-r}\leqslant M\cdot r\,,\quad r\in (0,
r_1)$$
for some $0<r_1<r_0$ and $M=M(r_0).$ The function $1-r^2_0-r$ is
strictly positive for $r\in [0, 1].$ Therefore, the function
$\frac1r\cdot\log\frac{1-r^2_0+r}{1-r^2_0-r}$ is continuous by $r\in
[r_1, 1]$ and, therefore, is bounded for the same $r$ with some
constant $\widetilde{C}.$ Putting $C^{\,-1}_1:=\max\{M,
\widetilde{C}\},$ we obtain that
\begin{equation}\label{eq8A}
h(z_1, z_2)\leqslant \log\frac{1-r^2_0+r}{1-r^2_0+r}\leqslant
C^{\,-1}_1\cdot r=C^{\,-1}_1\cdot |z_1-z_2|\qquad\forall\,\, z_1,
z_2\in B(0, r_0)\,.
\end{equation}
Lemma is proved.~$\Box$
\end{proof}

\medskip
In what follows, we set
\begin{equation}\label{eq5C}
s_h(\gamma):=\sup\limits_{\pi}\sum\limits_{k=0}^mh(\gamma(t_k),
\gamma(t_{k+1}))\,,
\end{equation}
where $\sup$ is taken over all partitions $\pi=\{a=t_0\leqslant
t_1\leqslant t_2\leqslant\ldots\leqslant t_m=b\}.$
Let us prove the following important statement,
generalizing~\cite[теорема~1.3(5)]{Va$_3$}.

\medskip
\begin{lemma}\label{lem2}
{\sl\, Let $\alpha:[a, b]\rightarrow {\Bbb B}^n$ be a path that is
locally rectifiable with respect to length $s_h$ in~(\ref{eq1}), and
let $s_h=s_h(t)$ be the hyperbolic length of $\alpha$ on $[a, t],$
where $a\leqslant t\leqslant b.$ Then $\alpha^{\,\prime}(t)$ and
$s_h^{\,\prime}(t)$ exist for almost all $t\in [a, b],$ wherein
\begin{equation}\label{eq8B}
\frac{2|\alpha^{\,\prime}(t)|}{1-|\alpha(t)|^2}=s_h^{\,\prime}(t)
\end{equation}
for almost all $t\in [a, b].$
 }
\end{lemma}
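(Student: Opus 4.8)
The plan is to reduce the hyperbolic statement to the corresponding Euclidean one via the comparability of the two metrics contained in Lemma~\ref{lem3}, and then to recover the exact conformal factor $\frac{2}{1-|\alpha(t)|^2}$ from the explicit formula~(\ref{eq3}).

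First I would observe that the image $\alpha([a,b])$ is a compact subset of ${\Bbb B}^n$. By the right-hand inequality in~(\ref{eq9B}), which holds throughout ${\Bbb B}^n$, every Euclidean partition sum $\sum_k|\alpha(t_k)-\alpha(t_{k+1})|$ is dominated by the corresponding hyperbolic sum $\sum_k h(\alpha(t_k),\alpha(t_{k+1}))$; hence $\alpha$ is locally rectifiable with respect to ordinary Euclidean length as well, and its Euclidean length function $s=s(t)$ satisfies $s(t)\leqslant s_h(t)$. By~\cite[Theorem~1.3(5)]{Va$_3$}, applied to the Euclidean length, the derivatives $\alpha^{\,\prime}(t)$ and $s^{\,\prime}(t)$ exist for almost all $t\in[a,b]$ and satisfy $|\alpha^{\,\prime}(t)|=s^{\,\prime}(t)$ there. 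It therefore remains to prove that $s_h^{\,\prime}(t)$ exists almost everywhere and equals $\frac{2}{1-|\alpha(t)|^2}\,s^{\,\prime}(t)$.

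Next I would establish the precise infinitesimal comparison of the two metrics. Directly from~(\ref{eq3}), writing $t=\frac{|z_1-z_2|}{\sqrt{|z_1-z_2|^2+(1-|z_1|^2)(1-|z_2|^2)}}$ and using $\log\frac{1+t}{1-t}=2t+o(t)$ as $t\to 0$ together with $\sqrt{|z_1-z_2|^2+(1-|z_1|^2)(1-|z_2|^2)}=(1-|x_0|^2)(1+o(1))$ as $z_1,z_2\to x_0$, one checks that for every $x_0\in{\Bbb B}^n$ and every $\varepsilon>0$ there is $\delta>0$ with
$$\Bigl|\frac{h(z_1,z_2)}{|z_1-z_2|}-\frac{2}{1-|x_0|^2}\Bigr|<\varepsilon\qquad\text{whenever }z_1,z_2\in B(x_0,\delta),\ z_1\neq z_2\,.$$
Fix a point $t_0$ at which $\alpha^{\,\prime}(t_0)$ and $s^{\,\prime}(t_0)$ exist, and let $\varepsilon>0$. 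By continuity of $\alpha$ choose $\Delta_0>0$ so small that $\alpha([t_0,t_0+\Delta])\subset B(\alpha(t_0),\delta)$ for $0<\Delta\leqslant\Delta_0$. For any partition $t_0=\tau_0\leqslant\cdots\leqslant\tau_k=t_0+\Delta$ the comparison above bounds each term $h(\alpha(\tau_i),\alpha(\tau_{i+1}))$ between $\bigl(\frac{2}{1-|\alpha(t_0)|^2}\mp\varepsilon\bigr)|\alpha(\tau_i)-\alpha(\tau_{i+1})|$; summing and passing to the supremum over partitions (which turns the Euclidean partition sums into $s(t_0+\Delta)-s(t_0)$ and the hyperbolic ones into $s_h(t_0+\Delta)-s_h(t_0)$) yields
$$\Bigl(\tfrac{2}{1-|\alpha(t_0)|^2}-\varepsilon\Bigr)\bigl(s(t_0+\Delta)-s(t_0)\bigr)\leqslant s_h(t_0+\Delta)-s_h(t_0)\leqslant\Bigl(\tfrac{2}{1-|\alpha(t_0)|^2}+\varepsilon\Bigr)\bigl(s(t_0+\Delta)-s(t_0)\bigr)\,.$$
Dividing by $\Delta$, letting $\Delta\to 0^{+}$ and, symmetrically, $\Delta\to 0^{-}$, using that $s^{\,\prime}(t_0)$ exists, and finally letting $\varepsilon\to 0$, gives $s_h^{\,\prime}(t_0)=\frac{2}{1-|\alpha(t_0)|^2}\,s^{\,\prime}(t_0)=\frac{2|\alpha^{\,\prime}(t_0)|}{1-|\alpha(t_0)|^2}$, which is~(\ref{eq8B}).

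The main obstacle I expect is the third paragraph: one must obtain the exact conformal factor $\frac{2}{1-|x_0|^2}$ rather than merely the non-sharp two-sided bound of Lemma~\ref{lem3}, and the estimate must be \emph{uniform} for all pairs $z_1,z_2$ in a genuine neighborhood of $x_0$ (not only along sequences tending to $x_0$), so that it applies simultaneously to every term of an arbitrary partition. Once that uniform local estimate is secured, the passage to the supremum over partitions and the differentiation at $t_0$ are routine.
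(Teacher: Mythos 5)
Your proof is correct, but it takes a genuinely different route from the paper's. The paper generalizes V\"{a}is\"{a}l\"{a}'s argument intrinsically to the hyperbolic length: the inequality $\frac{2|\alpha^{\,\prime}(t)|}{1-|\alpha(t)|^2}\leqslant s_h^{\,\prime}(t)$ follows from $h(\alpha(t),\alpha(t_0))\leqslant|s_h(t)-s_h(t_0)|$ together with the pointwise asymptotics $h(\alpha(t),\alpha(t_0))/|\alpha(t)-\alpha(t_0)|\rightarrow 2/(1-|\alpha(t_0)|^2)$, and the reverse inequality is then obtained by repeating V\"{a}is\"{a}l\"{a}'s covering argument: the exceptional set is written as $A=\bigcup_k A_k$, where $A_k$ consists of points at which the difference quotient of $s_h$ exceeds that of $h$ along $\alpha$ by $1/k$ at all scales below $1/k$, and $m_1(A_k)=0$ is deduced by comparing a fine partition sum with the total length. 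You instead use the Euclidean case of the theorem as a black box and transfer it by a two-sided, locally uniform comparison of the two metrics with the exact conformal factor. Your key estimate is correct: on $B(x_0,\delta)$ the quantity $\sqrt{|z_1-z_2|^2+(1-|z_1|^2)(1-|z_2|^2)}$ tends to $1-|x_0|^2$ uniformly and $\log\frac{1+t}{1-t}=2t(1+O(t^2))$, so $h(z_1,z_2)/|z_1-z_2|\rightarrow 2/(1-|x_0|^2)$ uniformly over distinct pairs in $B(x_0,\delta)$; the sandwich over partitions is then legitimate (the same partitions serve both sums, and both length functions are additive over concatenation), and it yields the two inequalities at once. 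Your route buys economy, since the measure-theoretic covering argument is not repeated but is exactly the content of the cited Euclidean theorem; it also gives the slightly stronger pointwise conclusion that $s_h^{\,\prime}$ exists with the stated value wherever the Euclidean $s^{\,\prime}$ exists, and it works verbatim for any conformal metric with continuous positive density. The paper's route buys self-containedness and needs only the sequential, not uniform, form of the metric comparison, which is precisely the delicate point you correctly identified and handled in your final paragraph.
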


\medskip
\begin{proof}
The function $s_h=s_h(t)$ is monotone and, therefore, is almost
everywhere differentiable. In addition, since $\alpha$ is
rectifiable, there exists $0<r_0<1$ such that $\alpha(t)\in B(0,
r_0)$ for all $t\in [a, b].$ In this case, by Lemma~\ref{lem3}
$\alpha$ is also rectifiable in the Euclidean sense. Thus, $\alpha$
has bounded variation and, therefore, is also differentiable almost
everywhere. To establish the equality~(\ref{eq8B}), we will follow
the logic of the reasoning used in the proof
of~\cite[Theorem~1.3(5)]{Va$_3$}. First of all, based on definition
of the hyperbolic length in~~(\ref{eq5C}), we can write that
\begin{equation}\label{eq9}
\frac{h(\alpha(t), \alpha(t_0))}{|t-t_0|}\leqslant
\frac{|s_h(t)-s_h(t_0)|}{|t-t_0|}\,.
\end{equation}
Multiplying the numerator and denominator of the ratio~(\ref{eq9})
by $|\alpha(t)-\alpha(t_0)|,$ we obtain that
\begin{equation}\label{eq9A}\frac{|\alpha(t)-\alpha(t_0)|}{|\alpha(t)-\alpha(t_0)|}
\cdot \frac{h(\alpha(t), \alpha(t_0))}{|t-t_0|}\leqslant
\frac{|s_h(t)-s_h(t_0)|}{|t-t_0|}\,.
\end{equation}
Find out the behavior of the function $\varphi(t)=\frac{h(\alpha(t),
\alpha(t_0))}{|\alpha(t)-\alpha(t_0)|}$ as $t\rightarrow t_0.$ Since
$\log\frac{1+x}{1-x}\sim 2x$ as $x\rightarrow 0,$ then
$$\varphi(t)=\log\left(\frac{1+\frac{|\alpha(t)-\alpha(t_0)|}{{\sqrt{|\alpha(t)-\alpha(t_0)|^2+(1-|\alpha(t)|^2)(1-|\alpha(t_0)|^2)}}}}
{1-\frac{|\alpha(t)-\alpha(t_0)|}{{\sqrt{|\alpha(t)-\alpha(t_0)|^2+(1-|\alpha(t)|^2)(1-|\alpha(t_0)|^2)}}
}}\right)\cdot\frac{1}{|\alpha(t)-\alpha(t_0)|}\sim
$$
$$\sim \frac{2}{{\sqrt{|\alpha(t)-\alpha(t_0)|^2+(1-|\alpha(t)|^2)(1-|\alpha(t_0)|^2)}}}$$
as $t\rightarrow t_0.$ Then $\varphi(t)\rightarrow
\frac{2}{1-|\alpha(t_0)|^2}$ as $t\rightarrow t_0.$ In this case,
letting in~(\ref{eq9A}) to the limit as $t\rightarrow t_0,$ we
obtain that
\begin{equation}\label{eq10}
\frac{2|\alpha^{\,\prime}(t)|}{1-|\alpha(t_0)|^2}\leqslant
s_h^{\,\prime}(t)
\end{equation}
for almost all $t\in [a, b].$

To complete the proof, it remains to establish the opposite
inequality to~(\ref{eq10}). Denote by $A$ the set of all points of
$[a, b],$ for which $\alpha^{\,\prime}(t)$ и $s_h^{\,\prime}(t)$
exist and, wherein,
$$\frac{2|\alpha^{\,\prime}(t)|}{1-|\alpha(t_0)|^2}<
s_h^{\,\prime}(t)\,.$$ Let $A_k$ be the set of all points $t\in A,$
for which
$$\frac{s_h(q)-s_h(p)}{q-p}\geqslant \frac{h(\alpha(q),\alpha(p))}{q-p}+1/k\,,$$
where $a\leqslant p\leqslant t\leqslant q \leqslant b$ and
$0<q-p<1/k.$ Clearly, to complete the proof, it suffices to
establish that $m_1(A_k)=0$ for any $k=1,2,\ldots,$ where $m_1$ is
Lebesgue measure in ${\Bbb R}^1.$

Let $l(\alpha)$ denote the hyperbolic length of $\alpha.$ Given
$\varepsilon>0,$ consider the partitioning of the segment $[a, b]$
by points $a=t_1\leqslant t_2\leqslant\ldots\leqslant t_m=b,$ so
that $l(\alpha)\leqslant \sum\limits_{k=1}^m h(\alpha(t_k),
\alpha(t_{k-1}))+\varepsilon/k$ и $t_{j}-t_{j-1}<1/k$ for every
$j=1,2,\ldots, m.$ If $[t_{j-1}, t_j]\cap A_k\ne \varnothing,$ then
by definition of the set $A_k,$ $s_h(t_j)-s_h(t_{j-1})\geqslant
h(\alpha(t_j), \alpha(t_{j-1}))+(t_{j}-t_{j-1})/k.$ Therefore,
denoting $\Delta_j:=[t_{j-1}, t_j],$ we obtain that
$$m_1(A_k)\leqslant\sum\limits_{\Delta_j\cap A_k\ne \varnothing}m_1(\Delta_j)\leqslant
k\sum\limits_{j=1}^m (s_h(t_j)-s_h(t_{j-1})- h(\alpha(t_j),
\alpha(t_{j-1}))) \leqslant$$
$$\leqslant k\left(l(\alpha)-
\sum\limits_{j=1}^m h(\alpha(t_j), \alpha(t_{j-1}))\right)\leqslant
\varepsilon\,.$$
The last relation proves the equality $m_1(A_k)=0$ and, since
$A=\bigcup\limits_{k=1}^{\infty} A_k,$ then $m_1(A)=0,$ as required.
~$\Box$
\end{proof}

\medskip
Let $ I $ be an open, closed, or semi-closed finite interval.
According to~\cite[Section~7.1]{He} and~\cite[Theorem~2.4]{Va$_3$},
each rectifiable path $\gamma:I\rightarrow {\Bbb R}^n$
(respectively, $\gamma:I\rightarrow {\Bbb B}^n/G$) admits a
parametrization $\gamma(t)=(\gamma^0\circ l_\gamma)(t),$ where
$l_\gamma$ denotes the length of the path $\gamma$ on $[a, t].$
Depending on the context, this length can be understood both in the
Euclidean and hyperbolic sense, as well as in the sense of the
factor space ${\Bbb B}^n/G.$ In this case, the path $\gamma^0:[0,
l(\gamma)]\rightarrow {\Bbb R}^n$ (respectively, $\gamma^0:[0,
l(\gamma)]\rightarrow {\Bbb B}^n/G$) is unique and is called {\it
the normal representation} of~$\gamma.$ Let $\gamma:[a,
b]\rightarrow {\Bbb B}^n$ be a locally rectifiable path. Then we set
$$\int\limits_{\alpha}\rho(x)\,ds_h(x)=\int\limits_0^{l(\gamma)}\rho(\alpha^0(s))\,ds\,.$$
Based on the above, by Lemma~\ref{lem2} we obtain the following
assertion.

\medskip
\begin{corollary}\label{cor2}
{\sl\,Let $\alpha:[a, b]\rightarrow {\Bbb B}^n$ be an absolutely
continuous path and let $\rho:{\Bbb B}^n\rightarrow {\Bbb R}$ be a
nonnegative Borel function. Then
\begin{equation}\label{eq14}
\int\limits_{\alpha}\rho(x)\,ds_h(x)=\int\limits_a^b\frac{2\rho(\alpha(t))
|\alpha^{\,\prime}(t)|}{1-|\alpha(t)|^2}\,dt\,, \end{equation}
in particular,
}
\end{corollary}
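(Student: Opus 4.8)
The plan is to reduce the claim to a change of variables in a one-dimensional integral. By the very definition of the hyperbolic line integral adopted just above the statement,
\[
\int\limits_{\alpha}\rho(x)\,ds_h(x)=\int\limits_0^{l(\alpha)}\rho(\alpha^0(s))\,ds\,,
\]
where $\alpha^0:[0,l(\alpha)]\to{\Bbb B}^n$ is the normal representation of $\alpha$, characterized by $\alpha(t)=\alpha^0(s_h(t))$ for all $t\in[a,b]$. My aim is therefore to pass from the variable $s$ to the variable $t$ via the substitution $s=s_h(t)$, which will produce the factor $s_h^{\,\prime}(t)$, and then to replace this factor by $2|\alpha^{\,\prime}(t)|/(1-|\alpha(t)|^2)$ using Lemma~\ref{lem2}.

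First I would record that $\alpha$ is rectifiable and that $s_h$ is absolutely continuous. Since $[a,b]$ is compact and $\alpha$ is continuous, $\alpha([a,b])$ is a compact subset of ${\Bbb B}^n$, so $\alpha(t)\in B(0,r_0)$ for some $0<r_0<1$; by Lemma~\ref{lem3} the hyperbolic and Euclidean distances are comparable on $B(0,r_0)$, whence for $a\leqslant\tau\leqslant t\leqslant b$ the increment $s_h(t)-s_h(\tau)$ is bounded above by $C_1^{\,-1}$ times the Euclidean length of $\alpha|_{[\tau,t]}$. Because $\alpha$ is absolutely continuous, its Euclidean length function is absolutely continuous; the displayed domination then shows that the monotone function $s_h$ is absolutely continuous as well.

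With $s_h$ monotone and absolutely continuous (and $s_h(a)=0$, $s_h(b)=l(\alpha)$), the standard change of variables for such functions gives, for every nonnegative Borel $g$,
\[
\int\limits_0^{l(\alpha)}g(s)\,ds=\int\limits_a^b g(s_h(t))\,s_h^{\,\prime}(t)\,dt\,.
\]
I would apply this with $g(s)=\rho(\alpha^0(s))$, which is Borel since $\rho$ is Borel and $\alpha^0$ is continuous. The normal-representation identity $\alpha^0(s_h(t))=\alpha(t)$ turns $g(s_h(t))$ into $\rho(\alpha(t))$, and Lemma~\ref{lem2} replaces $s_h^{\,\prime}(t)$ by $2|\alpha^{\,\prime}(t)|/(1-|\alpha(t)|^2)$ for almost all $t$. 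Combining the three displays yields exactly~(\ref{eq14}).

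The only genuinely substantive step is the absolute continuity of $s_h$, which is what legitimizes the change of variables; everything hinges on the metric comparison of Lemma~\ref{lem3} to transfer absolute continuity from the Euclidean length to the hyperbolic one. The Borel measurability of $g$ and the normal-representation identity are routine, and Lemma~\ref{lem2} supplies the final substitution of the derivative.
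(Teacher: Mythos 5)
Your argument is correct and follows exactly the route the paper intends: the paper states this corollary as an immediate consequence of the definition of $\int_{\alpha}\rho\,ds_h$ via the normal representation together with Lemma~\ref{lem2}, which is precisely your reduction. You merely make explicit the details the paper leaves tacit --- the absolute continuity of $s_h$ (obtained from Lemma~\ref{lem3} and the absolute continuity of $\alpha$) and the resulting change of variables $s=s_h(t)$ --- so there is nothing substantively different to compare.
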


\medskip
Let $p_0\in {\Bbb B}^n/G$ and $z_0\in {\Bbb B}^n$ be such that
$\pi(z_0)=p_0,$ where $\pi$ is the natural projection of ${\Bbb
B}^n$ onto ${\Bbb B}^n/G.$ By Proposition~\ref{pr1A},
Lemma~\ref{lem4} and Lemma~\ref{lem6} we may choose
$\varepsilon_0>0$ such that $\pi$ maps $B_h(z_0,\varepsilon_0)$
onto~$\widetilde{B}(p_0, \varepsilon_0)$ homeomorphically. Without
loss of generality, we may assume that $z_0=0.$ Indeed, otherwise we
consider the auxiliary map
$T_{z_0}=p_{z_0}\circ\sigma_{z_0},$ where $\sigma_{z_0}$ be an
inversion in the sphere $S(z^*_0, r)$ orthogonal to ${\Bbb S}^{n-1}$
and $p_{z_0}$ denote the reflection in the $(n—1)$-dimensional plane
$P(z_0, 0)$ through the origin and orthogonal to $z_0.$ In other
words,
$\sigma_{z_0}(z)=z_0^*+r^2(x-z_0^*)^*,$ $r^2=|z_0^*|^{\,-2}-1,$
$z_0^*=\frac{z_0}{|z_0|^2},$ $p_{z_0}(x)=x-2(z_0,
x)\frac{z_0}{|z_0|^2}.$ It can be shown that the mapping $T_{z_0}$
transforms the unit ball onto itself, $T_{z_0}(z_0)=0,$ and,
moreover, $T$ has no fixed points in ${\Bbb B}^n,$
see~\cite[Section~1.34, Lemma~1.37]{Vu}. In the above notation, a
neighborhood $U$ of a point $p_0,$ lying together with its closure
in $\widetilde{B}(p_0, \varepsilon_0)$, we will call {\it a normal
neighborhood of a point} $p_0.$

\medskip
\begin{remark}\label{rem1}
Note that~~${\Bbb B}^n/G$ admits a countable covering consisting of
normal neighborhoods~$U_k.$ Indeed, ${\Bbb
B}^n/G=\bigcup\limits_{x\in{\Bbb B}^n/G}\widetilde{B}(x,
\varepsilon(x)),$ where $\varepsilon(x)>0$ is such that
$\widetilde{B}(x, \varepsilon(x))$ is a normal neighborhood of the
point~$x.$ Note that
$$\pi^{\,-1}({\Bbb
B}^n/G)=\bigcup\limits_{x\in{\Bbb B}^n/G}\pi^{\,-1}(\widetilde{B}(x,
\varepsilon(x)))={\Bbb B}^n\,,$$
and~$\pi^{\,-1}(\widetilde{B}(x, \varepsilon(x)))$ is open in~${\Bbb
B}^n$ by the continuity of $\pi.$ By the Lindel\"{o}f theorem,
see~\cite[Theorem~1.5.XI]{Ku}, we may choose the sequence $x_k\in
{\Bbb B}^n/G,$ $k=1,2,\ldots$ such that
$${\Bbb
B}^n=\bigcup\limits_{k=1}^{\infty}\pi^{\,-1}(\widetilde{B}(x_k,
\varepsilon(x_k)))\,.$$
Since $\pi(\pi^{\,-1}(\widetilde{B}(x_k,
\varepsilon(x_k))))=\widetilde{B}(x_k, \varepsilon(x_k))$ and, in
addition, $\pi({\Bbb B}^n)={\Bbb B}^n/G,$ it follows that
$$\pi({\Bbb
B}^n)=\bigcup\limits_{k=1}^{\infty}\pi(\pi^{\,-1}(\widetilde{B}(x_k,
\varepsilon(x_k))))=\bigcup\limits_{k=1}^{\infty}\widetilde{B}(x_k,
\varepsilon(x_k))={\Bbb B}^n/G\,,$$
as required.
\end{remark}

\medskip
Taking into account~\cite[Lemmas 8.2 and 8.3]{MRSY$_2$}, passing to
the covering of~${\Bbb B}^n/G$ with a finite or countable number of
normal neighborhoods, which exists in view of Remark~\ref{rem1}, and
using the countable semi-additivity of the measure~$\widetilde{h},$
we obtain the following statement.

\medskip
\begin{proposition}\label{pr1}
{\sl\, Let $f:D\rightarrow {\Bbb B}^n/G_*$ be a mapping that is
almost everywhere differentiable in local coordinates and, moreover,
has $N$ and $N^{\,-1}$-Luzin properties. Then there exists an at
most countable sequence of compact sets $C_k^{\,*}\subset D,$ such
that $\widetilde{h}(B)=0,$ $B=D\setminus
\bigcup\limits_{k=1}^{\infty} C_k^{\,*},$ and $f|_{C_k^{\,*}}$ is
one-to-one and bilipschitz in local coordinates for each
$k=1,2,\ldots .$ Moreover, $f$ is differentiable on $C_k^{\,*},$
wherein $J(x, f)\ne 0$ for $x\in C_k^{\,*}.$}
\end{proposition}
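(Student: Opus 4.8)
The plan is to reduce the statement on the factor spaces to the corresponding known decomposition for mappings between Euclidean domains, using the local-coordinate charts supplied by Proposition~\ref{pr1A}. The cited results \cite[Lemmas~8.2 and 8.3]{MRSY$_2$} provide exactly such a decomposition for a mapping $F:\Omega\rightarrow{\Bbb R}^n$ between Euclidean domains that is differentiable almost everywhere and enjoys the $N$ and $N^{\,-1}$ properties: there is a countable family of compacta on which $F$ is bilipschitz, one-to-one, and differentiable with nonvanishing Jacobian, off a set of measure zero. The difficulty is not in that Euclidean statement but in transferring it correctly through the projections $\pi$ and $\pi_*$ and in patching the local pieces into a single countable family with the stated measure-zero exceptional set.

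First I would invoke Remark~\ref{rem1} to fix a countable covering of $D$ (more precisely of ${\Bbb B}^n/G$ restricted to $D$) by normal neighborhoods $U_k$, on each of which $\pi$ is a homeomorphism onto its $\pi$-preimage and $\pi_*$ is a homeomorphism onto the image side, so that the local representation $F_k=\pi_*^{\,-1}\circ f\circ\pi$ is a well-defined map between Euclidean domains $\pi^{\,-1}(U_k)\subset{\Bbb B}^n$ and the corresponding target. By hypothesis $f$ is almost everywhere differentiable in local coordinates and has the $N$ and $N^{\,-1}$-Luzin properties; since $\pi$ and $\pi_*$ are conformal (hence locally bilipschitz on compacta by Lemma~\ref{lem3}, which pins hyperbolic against Euclidean distance), each $F_k$ inherits almost-everywhere differentiability together with the Euclidean $N$ and $N^{\,-1}$ properties on its chart. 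The Luzin-property transfer is where one must be careful: the hypotheses are phrased with respect to the measure $\widetilde{h}$, so I would use that $\pi$ sends hyperbolic measure $V$ to $\widetilde{h}$ and that, on a compact piece, $\widetilde{h}$, the hyperbolic measure, and Lebesgue measure $m$ are mutually absolutely continuous (indeed comparable), again via Lemma~\ref{lem3}.

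Next, on each chart I apply \cite[Lemmas~8.2 and 8.3]{MRSY$_2$} to $F_k$ to obtain compacta $\{C_{k,j}\}_{j}$ in $\pi^{\,-1}(U_k)$ on which $F_k$ is one-to-one, bilipschitz, differentiable with $J(x,F_k)\ne 0$, and such that the complement of their union within $\pi^{\,-1}(U_k)$ is Lebesgue-null. Pushing these forward by the chart homeomorphism $\pi$ produces compact sets in $D$; relabeling the doubly-indexed family $\{\pi(C_{k,j})\}$ as a single sequence $C_m^{\,*}$ gives the required at-most-countable collection, and $f|_{C_m^{\,*}}$ is one-to-one and bilipschitz in local coordinates with nonvanishing Jacobian because $\pi$ and $\pi_*$ are conformal and locally bilipschitz.

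The main obstacle, and the step I would dwell on, is controlling the exceptional set globally: I must show $\widetilde{h}(B)=0$ for $B=D\setminus\bigcup_m C_m^{\,*}$. On each chart the Euclidean complement is $m$-null, hence $V$-null, hence $\widetilde{h}$-null after applying $\pi$ and using the $N$-property-type comparison above; the passage to the full domain then uses the countable semi-additivity of $\widetilde{h}$ over the countable covering, exactly as the paragraph preceding the statement indicates. I would take care that the union of the chart-complements still covers $B$, so that $\widetilde{h}(B)\leqslant\sum_k\widetilde{h}\bigl(\text{(null complement in }U_k)\bigr)=0$, which closes the argument.~$\Box$
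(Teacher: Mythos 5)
Your argument is exactly the one the paper intends: it justifies the proposition in a single sentence before the statement by citing \cite[Lemmas~8.2 and 8.3]{MRSY$_2$}, the countable covering by normal neighborhoods from Remark~\ref{rem1}, and the countable semi-additivity of $\widetilde{h}$. Your proposal simply spells out the chart-by-chart transfer and the measure comparison via Lemma~\ref{lem3} in more detail than the paper does, so it is correct and takes essentially the same route.
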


\medskip
Let $\gamma:[a, b]\rightarrow {\Bbb B}^n/G$ be a locally rectifiable
path on ${\Bbb B}^n/G.$ Define the function $l_{\gamma}(t)$ as the
length of the path $\gamma|_{[a, t]},$ $a\leqslant t\leqslant b,$
where ''length'' is understood in the sense of ${\Bbb B}^n/G.$ Given
$B\subset {\Bbb B}^n/G,$ we set
\begin{equation}\label{eq36}
l_{\gamma}(B)={\rm mes}_1\,\{s\in [0, l(\gamma)]: \gamma(s)\in
B\}\,,
\end{equation}
where, as usual, ${\rm mes}_1$ denotes the Lebesgue linear measure
in ${\Bbb R},$ and $l(\gamma)$ is the length of~$\gamma.$ Similarly,
we may define $l_{\gamma}(B)$ for the dashed line $\gamma,$ that is,
when $\gamma:~\bigcup\limits_{i=1}^{\infty}(a_i, b_i)\rightarrow
{\Bbb B}^n/G,$ where $a_i<b_i$ for all $i\in {\Bbb N}$ and $(a_i,
b_i)\cap (a_j, b_j)=\varnothing$ for $i\ne j.$ Hereinafter, the set
$E\subset {\Bbb B}^n/G$ will be called {\it measurable,} if $E$ may
be covered with a countable number of open sets $U_k,$ $k=1,2,\ldots
,$ homeomorphic to the unit ball by some mapping $\varphi_k: U_k
\rightarrow D $ so that $\varphi_k(U_k\cap E)$ is measurable
relative to the Lebesgue measure in ${\Bbb R}^n.$ Similarly, one can
define {\it Borel sets} $E\subset{\Bbb B}^n/G.$ We now prove the
following statement; see also~\cite[Theorem~33.1]{Va$_3$}.

\medskip
\begin{lemma}\label{lem1}
{\sl\, Suppose that the set $B_0\subset {\Bbb B}^n/G$ has zero
$\widetilde{h}$-measure. Then
\begin{equation}\label{eq18}
l_{\gamma}(B_0)=0\,.
\end{equation}
for almost all paths $\gamma$ in ${\Bbb B}^n/G.$
}
\end{lemma}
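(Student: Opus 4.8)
The plan is to follow the classical Fuglede argument, adapted to the factor space. Let $\Gamma=\Gamma(B_0)$ denote the family of all locally rectifiable paths $\gamma$ in ${\Bbb B}^n/G$ for which $l_{\gamma}(B_0)>0$; proving (\ref{eq18}) for almost all paths is then equivalent to showing $M(\Gamma)=0.$

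First I would establish the outer regularity of the measure $\widetilde{h}$: since $\widetilde{h}(B_0)=0,$ I want a sequence of open sets $U_j\supset B_0$ with $\widetilde{h}(U_j)\leqslant 2^{-jn}.$ To obtain these, cover ${\Bbb B}^n/G$ by the countable collection of normal neighborhoods furnished by Remark~\ref{rem1}, on each of which $\pi$ is a homeomorphism carrying the hyperbolic measure onto $\widetilde{h}.$ On such a chart, by the positivity and local boundedness of the density in~(\ref{eq2B}), zero $\widetilde{h}$-measure corresponds to zero Lebesgue measure in ${\Bbb B}^n.$ Outer regularity of Lebesgue measure then yields open supersets of arbitrarily small measure inside each chart; pushing these forward by the open projection $\pi$ and taking the union over the countable covering, with the smallness distributed among the charts, produces the required sets $U_j.$

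Next I would set $\rho:=\sum_{j=1}^{\infty}\chi_{U_j},$ which is Borel. For any $\gamma\in\Gamma,$ passing to the normal representation and using~(\ref{eq36}) gives
$$\int\limits_{\gamma}\chi_{U_j}(p)\,ds_{\widetilde{h}}(p)=l_{\gamma}(U_j)\geqslant l_{\gamma}(B_0)=:c>0$$
for every $j,$ since $U_j\supset B_0;$ summing over $j$ yields $\int_{\gamma}\rho\,ds_{\widetilde{h}}=\infty.$ Hence $\varepsilon\rho\in{\rm adm}\,\Gamma$ for every $\varepsilon>0.$ By the Minkowski inequality in $L^n({\Bbb B}^n/G,\widetilde{h}),$
$$\left(\int\limits_{{\Bbb B}^n/G}\rho^n\,d\widetilde{h}\right)^{1/n}\leqslant\sum\limits_{j=1}^{\infty}\widetilde{h}(U_j)^{1/n}\leqslant\sum\limits_{j=1}^{\infty}2^{-j}=1,$$
so that $M(\Gamma)\leqslant\varepsilon^n\int_{{\Bbb B}^n/G}\rho^n\,d\widetilde{h}\leqslant\varepsilon^n$ for all $\varepsilon>0,$ whence $M(\Gamma)=0.$

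The routine portions—the Minkowski bound and the divergence of the path integral—are immediate. The main obstacle is the first step: making precise that the global measure $\widetilde{h}$ is outer regular, i.e. that a $\widetilde{h}$-null set can be enclosed in open sets of arbitrarily small measure. This is where the manifold structure (Proposition~\ref{pr1A}), the measure-preserving property of $\pi$ on normal neighborhoods, and the countable covering of Remark~\ref{rem1} must be combined with care, since $\widetilde{h}$ is defined through a fundamental polyhedron rather than intrinsically.
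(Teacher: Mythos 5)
Your proof is correct, but it takes a genuinely different route from the paper's. The paper follows V\"{a}is\"{a}l\"{a}'s Theorem~33.1 almost verbatim: it takes a Borel hull $B\supset B_0$ with $\widetilde{h}(B)=0$, uses the admissible function $\rho=\infty$ on $B$ and $\rho=0$ elsewhere, localizes to the countable family of normal neighborhoods from Remark~\ref{rem1}, invokes Federer's Theorem~3.2.5 (case $m=1$) to identify the path integral in the chart $\varphi_i$ with ${\mathcal H}^{\,1}(\varphi_i(B\cap|\gamma|))$, concludes that $\rho$ is admissible for each subfamily $\Gamma_i$ of paths meeting $B$ in a set of positive length inside $U_i$ while $\int\rho^n\,d\widetilde{h}=0$, and finishes by the minorization $\Gamma>\bigcup_i\Gamma_i$ together with countable subadditivity of the modulus. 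You instead run the Fuglede-type argument: open supersets $U_j\supset B_0$ of geometrically small measure, the finite-valued admissible functions $\varepsilon\sum_j\chi_{U_j}$, and Minkowski's inequality in $L^n$. What each buys: the paper's route needs only a Borel hull of measure zero (a weaker regularity statement than yours) but leans on the convention $\infty\cdot 0=0$ in the modulus integral and on Federer's theorem; your route is more elementary on the analytic side --- only the identity $\int_{\gamma}\chi_{U}\,ds_{\widetilde{h}}=l_{\gamma}(U)$ and the triangle inequality in $L^n$ --- but shifts the burden onto outer regularity of $\widetilde{h}$. You flag that step as the crux, and rightly so; note that it reduces to exactly the ingredient the paper itself uses silently, namely that on a normal neighborhood $\widetilde{h}$ coincides with $V\circ\varphi_i$ (this follows from the $G$-invariance of $V$ and the fact that the translates of the fundamental polyhedron tile ${\Bbb B}^n$ up to a null set, since $G$ acts freely and discontinuously); after that, Lebesgue outer regularity in each chart, the openness of $\pi$, and distributing the bound $2^{-jn}$ over the countable covering give your sets $U_j$ exactly as you describe. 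So the two proofs carry the same measure-theoretic overhead, and yours trades Federer's theorem for Minkowski's inequality.
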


\begin{proof}
Since the Lebesgue measure is regular, there exists a Borel set
$B\subset {\Bbb B}^n/G$ such that $B_0\subset B$ and
$\widetilde{h}(B_0)=\widetilde{B}=0,$ where $\widetilde{h}$ is a
measure on ${\Bbb B}^n/G,$ defined by~(\ref{eq2B}). Based on the
comments given before Proposition~\ref{pr1}, there is a sequence of
points $x_i,$ $i=1,2,\ldots ,,$ and the corresponding radii of the
balls $r_i=r_i(x_i)>0,$ $i=1,2,\ldots ,,$ such that
$${\Bbb B}^n/G=\bigcup\limits_{i=1}^{\infty} \widetilde{B}(x_i, r_i)\,,\quad
\overline{\widetilde{B}}(x_i, r_i)\subset U_i\,.$$
Denote by $\varphi_i=\pi_i^{\,-1}$ the map corresponding to the
definition of the normal neighborhood $U_i$ (see the comments made
before Proposition~\ref{pr1}). Let $g_i$ be the characteristic
function of the set $\varphi_i(B\cap U_i).$ By~\cite[Theorem~3.2.5
for $m=1$]{Fe}, we obtain that
\begin{equation}\label{eq19}
\int\limits_{\varphi_i(\gamma)}g_i(z)\,|dz|={\mathcal
H}^{\,1}(\varphi_i(B\cap |\gamma|))\,,
\end{equation}
where $\gamma:[a, b]\rightarrow {\Bbb B}^n/G$ -- is an arbitrary
locally rectifiable path, $|\gamma|$ is the locus of $\gamma$ in
${\Bbb B}^n,$ and $|dz|$ is an element of Euclidean length. Arguing
as in the proof of~\cite[Theorem~33.1]{Va$_3$}, we set
$$\rho(p)= \left \{\begin{array}{rr}\infty, & p\in B,
\\ 0 ,  &  p \notin B\ .
\end{array} \right.$$
Note that $\rho$ is a Borel function. Let $\Gamma_i$ be the
subfamily of all paths from $\Gamma,$ for which ${\mathcal
H}^{\,1}(\varphi_i(B\cap |\gamma|))>0.$ By~(\ref{eq19}), for
$\gamma\in\Gamma_i,$ we obtain that
$$\int\limits_{\gamma\cap \widetilde{B}(x_i, r_i)}\rho(p)\,ds_{\widetilde{h}}(p)=\int\limits_{\varphi_i(\gamma)}
\rho(\pi_i(y))\,ds_h(y)=2\int\limits_{\varphi_i(\gamma)}\frac{\rho(\pi_i(y))}{(1-|y|^2)^n}|dy|=$$
$$=2\int\limits_{\varphi_i(\gamma)}\frac{g(y)\rho(\pi_i(y))}{(1-|y|^2)^n}|dy|=\infty\,,$$
where $\gamma\cap \widetilde{B}=\gamma|_{S_i}$ is a dished line,
$S_i=\{s\in [0, l(\gamma)]: \gamma(s)\in \widetilde{B}(x_i, r_i)\}.$
Thus, $\rho\in{\rm adm}\,\Gamma_i$ and
\begin{equation}\label{eq5A}
M(\Gamma_i)\leqslant \int\limits_{{\Bbb
B}^n/G}\rho^n(p)\,d\widetilde{h}(p)=0\,.
\end{equation}
Note that
$\Gamma>\bigcup\limits_{i=1}^{\infty}\Gamma_i,$ so taking into
account the relation~(\ref{eq5A}), we obtain that
$M(\Gamma)\leqslant\sum\limits_{i=1}^{\infty}M(\Gamma_i)=0.$ Lemma
is proved.~$\Box$
\end{proof}

\medskip
Let $f:D\rightarrow {\Bbb R}^n$ (or $f:D\rightarrow {\Bbb B}^n/G$)
is a map for which the image of any curve in $D$ does not degenerate
into a point. Let $I_0$ be a segment and $\beta: I_0\rightarrow
{\Bbb R}^n$ (or $\beta: I_0\rightarrow {\Bbb B}^n/G$) be a
rectifiable path. Let also $\alpha: I\rightarrow D$ be some path
such that $f\circ \alpha\subset \beta.$  If a function of length
$l_{\beta}: I_0\rightarrow [0, l(\beta)]$ is constant on some
interval $J\subset I,$ then $\beta$ is constant on $J$ and, due to
the assumption on $f,$ the path $\alpha$ is also constant on $J.$
This implies that there is only path $\alpha^{\,*}:
l_\beta(I)\rightarrow D,$ such that $\alpha=\alpha^{\,*}\circ
(l_\beta|_I).$ We say that $\alpha^{\,*}$ is {\it $f$-representation
of $\alpha$ with respect to~$\beta.$}

\medskip
The following lemma holds.

\medskip
\begin{lemma}\label{lem7}
{\sl\, Suppose that $G$ and $G_*$ are two groups of M\"{o}bius
transformations of the unit ball ${\Bbb B}^n,$ $n\geqslant 2,$ onto
itself, acting discontinuously on ${\Bbb B}^n$ and not having fixed
points in ${\Bbb B}^n.$ Let $D$ be a domain in ${\Bbb B}^n/G$ and
$f: D\rightarrow{\Bbb B}^n/G_*$ is a discrete map with
$ACP^{\,-1}$-property. Denote by $\gamma^{\,*}$ the
$f$-representation of $\gamma$ with respect to
$\widetilde{\gamma}=f\circ\gamma.$ Then $\gamma^{\,*}$ is absolutely
continuous in local coordinates for almost all closed paths
$\widetilde{\gamma}$ in ${\Bbb B}^n/G_*.$ }
\end{lemma}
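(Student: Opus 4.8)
The plan is to reduce everything to the single composition identity
$$\gamma^{\,*}=\gamma^{\,0}\circ L^{\,-1}_{\gamma,\,f}\,,$$
where $\gamma^{\,0}$ is the normal representation of $\gamma$ (so that $\gamma=\gamma^{\,0}\circ l_{\gamma}$) and $L^{\,-1}_{\gamma,\,f}$ is the inverse of the length-transfer function. Indeed, from $L_{\gamma,\,f}(l_{\gamma}(t))=l_{\widetilde{\gamma}}(t)$ one reads off $L^{\,-1}_{\gamma,\,f}=l_{\gamma}\circ l_{\widetilde{\gamma}}^{\,-1}$, while by definition $\gamma=\gamma^{\,*}\circ l_{\widetilde{\gamma}}$; substituting gives the identity at the level of normal representations. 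The $f$-representation $\gamma^{\,*}$ is well-defined here precisely because $f$ is discrete: if $\widetilde{\gamma}=f\circ\gamma$ is constant on a subinterval $J$, then $\gamma(J)$ is a connected subset of the discrete fibre $f^{\,-1}(\widetilde{\gamma}(J))$, hence a single point, so no path in $D$ degenerates under $f$ and the construction preceding the lemma applies.

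Since absolute continuity in local coordinates is a local property, I would fix an arbitrary parameter value $s_0\in\Delta_{\widetilde{\gamma}}$ and, using the covering by normal neighborhoods from Remark~\ref{rem1} together with continuity of $\gamma^{\,*}$, choose a normal neighborhood $U\ni\gamma^{\,*}(s_0)$ with chart $\varphi:=(\pi^{\,-1})|_{U}$ and a closed subsegment about $s_0$ on which $\gamma^{\,*}$ takes values in $U$. It then suffices to prove that $\varphi\circ\gamma^{\,*}$ is absolutely continuous on this subsegment; as $\Delta_{\widetilde{\gamma}}=[0, l(\widetilde{\gamma})]$ is compact for a closed rectifiable $\widetilde{\gamma}$, finitely many such subsegments cover it and the conclusion follows everywhere.

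On such a subsegment I would combine two facts. First, the normal representation is $1$-Lipschitz with respect to $\widetilde{h}$, since arc length dominates distance: $\widetilde{h}(\gamma^{\,0}(s),\gamma^{\,0}(s^{\,\prime}))\leqslant|s-s^{\,\prime}|$. Writing $w=\varphi(\gamma^{\,0}(s))$, inside the normal neighborhood Lemma~\ref{lem6} identifies the metric $d$ of~(\ref{eq5}) with the hyperbolic metric $h$, and the right-hand inequality of Lemma~\ref{lem3} bounds Euclidean distance by $h$; hence
$$|w-w^{\,\prime}|\leqslant h(w,w^{\,\prime})=d(w,w^{\,\prime})=\widetilde{h}(\gamma^{\,0}(s),\gamma^{\,0}(s^{\,\prime}))\leqslant|s-s^{\,\prime}|\,,$$
so $\varphi\circ\gamma^{\,0}$ is Euclidean-Lipschitz, in particular absolutely continuous in local coordinates. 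Second, by the $ACP^{\,-1}$-hypothesis, for almost all closed paths $\widetilde{\gamma}$ in $f(D)$ the (non-decreasing) function $L^{\,-1}_{\gamma,\,f}$ is absolutely continuous on every closed segment of $\Delta_{\widetilde{\gamma}}$; discarding the exceptional family of modulus zero, the identity of the first paragraph exhibits $\varphi\circ\gamma^{\,*}=(\varphi\circ\gamma^{\,0})\circ L^{\,-1}_{\gamma,\,f}$ as the composition of a Lipschitz map with an absolutely continuous function, which is absolutely continuous.

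The main obstacle I anticipate is the passage from the intrinsic factor-space metric $\widetilde{h}$ to local Euclidean coordinates, that is, the verification that $1$-Lipschitzness of $\gamma^{\,0}$ with respect to $\widetilde{h}$ really yields Euclidean Lipschitzness inside a chart. This rests delicately on the local coincidence $d=h$ furnished by Lemma~\ref{lem6}, the comparison of $h$ with the Euclidean metric from Lemma~\ref{lem3}, and the fact (Remark~\ref{rem1}) that a normal neighborhood can be chosen so small that these local identifications hold throughout it. By contrast, the real-variable ingredient---that the composition of a Lipschitz function with an absolutely continuous one is absolutely continuous---is entirely standard and presents no difficulty.
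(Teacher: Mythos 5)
Your proposal is correct and follows essentially the same route as the paper: the identity $\gamma^{\,*}=\gamma^{\,0}\circ L^{\,-1}_{\gamma,\,f}$, the $1$-Lipschitz bound $\widetilde{h}(\gamma^{\,0}(s_1),\gamma^{\,0}(s_2))\leqslant|s_1-s_2|$ transferred to Euclidean local coordinates via Lemmas~\ref{lem6} and~\ref{lem3}, and the standard fact that a Lipschitz map composed with the absolutely continuous $L^{\,-1}_{\gamma,\,f}$ is absolutely continuous. Your added justifications (well-definedness of $\gamma^{\,*}$ from discreteness, and the explicit compactness/localization step) only make explicit what the paper leaves implicit.
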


\medskip
\begin{proof}
Indeed, $\gamma^{\,*}$ is rectifiable for almost all
$\widetilde{\gamma}$ by $f\in ACP^{\,-1}$-property. Let $L_{\gamma,
f}^{\,-1}$ be a function that participates in the definition of
$ACP^{\,-1}$-property, where the corresponding functions of length
are understood in the sense of the factor spaces under
consideration. Then
$$\gamma^{\,*}\circ l_{\widetilde{\gamma}}(t)=\gamma(t)=\gamma^{\,0}\circ
l_{\gamma}(t)=\gamma^{\,0}\circ L_{\gamma,
f}^{\,-1}\left(l_{\widetilde{\gamma}}(t)\right)$$
Denoting $s:=l_{\widetilde{\gamma}}(t),$ we obtain that
\begin{equation}\label{eq15}\gamma^{\,*}(s)=\gamma^{\,0}\circ L_{\gamma, f}^{\,-1}(s)\,.
\end{equation}
Then $\gamma^{\,*}$ is absolutely continuous in local coordinates,
since by the condition of the lemma $L_{\gamma, f}^{\,-1}(s)$ is
absolutely continuous and
$$\widetilde{h}(\gamma^{\,0}(s_1), \gamma^{\,0}(s_2))\leqslant |s_1-s_2|$$
for all $s_1, s_2\in [0, l(\gamma)].$ Here we also take into account
that locally $\widetilde{h}(\gamma^{\,0}(s_1), \gamma^{\,0}(s_2))$
coincides with $h(\varphi(\gamma^{\,0}(s_1)),
\varphi(\gamma^{\,0}(s_2)))$ in the corresponding local coordinates
$(U, \varphi).$ In addition, by Lemma~\ref{lem3},
$|\varphi(\gamma^{\,0}(s_1))- \varphi(\gamma^{\,0}(s_2))|\leqslant
h(\varphi(\gamma^{\,0}(s_1)), \varphi(\gamma^{\,0}(s_2))).$
\end{proof}~$\Box$

\medskip
\section{Proof of Theorem~\ref{th1}} Let $B_f$ be a branch set of $f.$ Note
that $\widetilde{h}(B_f)=0$ because $f$ is open and discrete,
see~\cite[Proposition~8.4]{MRSY$_2$}. Let $B$ and $C_k^*,$
$k=1,2,\ldots ,$ be sets corresponding to the notation of
Proposition~\ref{pr1}. Setting $B_1=C_1^*,$ $B_2=C_2^*\setminus
B_1,\ldots ,$
\begin{equation} \label{eq7.3.7y} B_k=C_k^*\setminus
\bigcup\limits_{l=1}\limits^{k-1}B_l\,,
\end{equation}
we obtain a countable cover of $D$ by pairwise disjoint sets $B_k,
k=0,1,2,\ldots $ such that $\widetilde{h}(B_0)=0,$ $B_0=B\cup
B_f=\left(D\setminus \bigcup\limits_{k=1}^{\infty} B_k\right)\bigcup
B_f.$ Since, by assumption, the map $f$ has $N$-property in $D,$
then $\widetilde{h_*}(f(B_0))=0.$

Since $\overline{D}$ and $\overline{D_*}$ are compact sets, there
are finite coverings $U_i,$ $1\leqslant i\leqslant I_0$ and $V_m,$
$1\leqslant m\leqslant N_0,$ such that
$$\overline{D}\subset\bigcup\limits_{i=1}^{I_0}U_i\,,\quad \overline{D_*}\subset\bigcup
\limits_{m=1}^{N_0}V_m\,,$$
where $U_i$ and $V_m$ are normal neighborhoods of some points
$x_i\in {\Bbb B}^n/G$ and $y_m\in {\Bbb B}^n/G_*.$ We may choose
these covers so that $\widetilde{h}(\partial
U_i)=\widetilde{h_*}(\partial V_m)=0$ for every $1\leqslant
i\leqslant I_0$ and $V_m,$ $1\leqslant m\leqslant N_0.$ In
particular, there are mappings $\varphi_i: U_i\rightarrow B(0,
r_i),$ $0<r_i<1,$ and $\psi_m: V_m\rightarrow B(0, R_m),$ $0<R_m<1,$
such that the volume and length in $U_i$ and $V_m$ are calculated
using the maps $\varphi_i$ and $\psi_m$ according to
formulas~(\ref{eq2B}) and~(\ref{eq5D}). Set
\begin{equation}\label{eq13}
R_0:=\max\limits_{1\leqslant m\leqslant N_0}R_m\,,\qquad
r_0:=\max\limits_{1\leqslant i\leqslant I_0}r_i\,.
\end{equation}
We now set $U_1^{\,\prime}=U_1,$ $U^{\,\prime}_2=U_2\setminus
\overline{U_1},$ $U^{\,\prime}_3=U_3\setminus (\overline{U_1}\cup
\overline{U_2}),$ $\ldots, U^{\,\prime}_{I_0}=U_{I_0}\setminus
(\overline{U_1}\cup \overline{U_2}\ldots \overline{U_{I_0-1}}).$
Note that by definition $U^{\,\prime}_i\subset U_i$ for $1\leqslant
i\leqslant I_0$ and $U^{\,\prime}_i\cap U^{\,\prime}_j=\varnothing$
for $i\ne j.$ In addition,
$D=\left(\bigcup\limits_{i=1}^{I_0}U^{\,\prime}_i\right)\bigcup
B_0^{\,*},$ where $U^{\,\prime}_i$ are open, and
$\widetilde{h}(B_0^{\,*})=0.$

\medskip
Similarly, we set $V_1^{\,\prime}=V_1,$ $V^{\,\prime}_2=V_2\setminus
\overline{V_1},$ $V^{\,\prime}_3=V_3\setminus (\overline{V_1}\cup
\overline{V_2}),$ $\ldots, V^{\,\prime}_{N_0}=V_{N_0}\setminus
(\overline{V_1}\cup \overline{V_2}\ldots \overline{V_{N_0-1}}).$ By
definition, $V^{\,\prime}_m\subset V_m$ for $1\leqslant m\leqslant
N_0$ and $V^{\,\prime}_m\cap V^{\,\prime}_j=\varnothing$ for $m\ne
j.$ Again, we have that
$D_*=\left(\bigcup\limits_{m=1}^{N_0}V^{\,\prime}_m\right)\bigcup
B_0^{\,**},$ while $V^{\,\prime}_m$ are open, and
$\widetilde{h_*}(B_0^{\,**})=0.$

\medskip
Next we set $U_{m, i}=f^{\,-1}(V^{\,\prime}_m)\cap U^{\,\prime}_i.$
Note that, by construction and continuity of $f,$ the sets $U_{m,
i}$ are open, in addition, $\widetilde{h}(f^{\,-1}(B_0^{\,**}))=0$
by $N^{\,-1}$-property of $f.$ Thus,
\begin{equation}\label{eq7A}
D\subset \left(\bigcup\limits_{1\leqslant i\leqslant I_0\atop
1\leqslant m\leqslant N_0}U_{m, i}\right)\bigcup
f^{\,-1}(B_0^{\,**})\bigcup B_0^{\,*}\,,
\end{equation}
see Figure~\ref{fig1} for an illustration.
\begin{figure}[h]
\centerline{\includegraphics[scale=0.5]{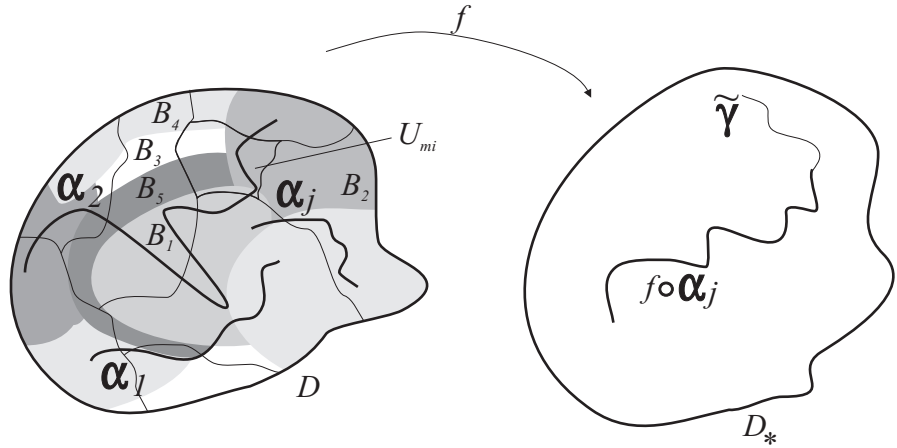}} \caption{To
the proof of Theorem~\ref{th1}}\label{fig1}
\end{figure}
Note that the equality $U_{m_1i_1}=U_{m_2i_2}$ is possible only with
$m_1=m_2$ and $i_1=i_2.$ Indeed, let $x\in U_{m_1i_1}\cap
U_{m_2i_2}.$ Then, in particular, $x\in U^{\,\prime}_{i_1}\cap
U^{\,\prime}_{i_2},$ which is possible only for $i_1=i_2,$ because
$U^{\,\prime}_i\cap U^{\,\prime}_j=\varnothing$ for $i\ne j.$
Further, the condition  $x\in U_{m_1i_1}\cap U_{m_2i_2}$ implies
that $f(x)\in V^{\,\prime}_{m_1}\cap V^{\,\prime}_{m_2},$ which is
also impossible with $m_1\ne m_2,$ because $V^{\,\prime}_i\cap
V^{\,\prime}_j=\varnothing$ for $i\ne j.$ Thus, $i_1=i_2$ and
$m_1=m_2,$ as required.

Set
$$f_{m, i}(x):=(\psi_m\circ f\circ\varphi_i^{\,-1})(x)\,,\quad x=\varphi_i(p)\,, \quad p\in U_{m, i}\,.$$
Let $\rho\in{\rm adm}\,\Gamma $ and
\begin{equation}\label{eq7.3.13}
\widetilde{\rho}(p_*)\quad=\quad\frac{1}{\widetilde{m}}
\chi_{f\left(D\setminus B_0
\right)}(p_*)\sup\limits_{C}\,\sum\limits_{p\,\in\,C}\rho^{\,*}(p)\,,
\end{equation}
where $C$ runs over all subsets of the form $f^{\,-1}(p_*)$ in
$D\setminus B_0,$ whose number of elements is not more than
$\widetilde{m},$ besides,
%
$$\rho^{\,*}(p)= \left \{\begin{array}{rr}\rho
(p)/l(f_{m, i}^{\,\prime}(\varphi_i(p))),
&  \text{при }\quad p\in U_{m, i}\setminus B_0, \\
0, & \text{otherwise.}\end{array} \right.$$
%
%

Note that
$$\widetilde{\rho}(p_*)=\frac{1}{\widetilde{m}}\sup\sum\limits_{l=1}^s\rho_{k_l,
i_l, m_l}(p_*)\,,$$ where
$k_l\in{\Bbb N},$ $1\leqslant i_l\leqslant I_0,$ $1\leqslant
m_l\leqslant N_0,$ $k_l\ne k_s,$ $i_l\ne i_s$ and $m_l\ne m_s$ for
$l\ne s,$

$$\rho_{k, i, m}(p_*)= \left \{\begin{array}{rr}
\rho^*(f^{-1}_{k, i, m}(p_*)), &  {\rm при }\,\,\,  p_*\in f(B_k\cap U_{m, i}), \\
0, & \text{otherwise,}\end{array} \right.
$$
moreover, the mapping $f_{k, i, m}=f|_{B_k\cap U_{m, i}},$
$k=1,2,\ldots,$ is injective. It follows that the function
$\widetilde{\rho}$ is Borel, see~\cite[section~2.3.2]{Fe}.

\medskip
We first consider the case when $\Gamma^{\,\prime}$ consists of
closed paths. Without loss of generality, we may assume that all
paths of the family $\Gamma^{\,\prime}$ a rectifiable. Let
$\widetilde{\gamma}\in\Gamma^{\,\prime}$ and
$\widetilde{\gamma}^0\colon [0, l(\widetilde{\gamma})]\rightarrow
{\Bbb B}^n/G_*$ be a normal representation of a path
$\widetilde{\gamma},$
$\widetilde{\gamma}(t)=\widetilde{\gamma}^0\circ
l_{\widetilde{\gamma}}(t).$ Let $I_j$ be a definition interval for
$\alpha_j^{\,*},$ which is the corresponding $f$-representation of
$\alpha_j$ with respect to $\widetilde{\gamma},$ i.e.
$\alpha_j(t)=\alpha^*_j\circ l_{\widetilde{\gamma}}(t)$ and $f\circ
\alpha^*_j\subset \widetilde{\gamma}^0.$ Paths $\alpha_j\in \Gamma$
with the specified property exist by the condition of the theorem,
and the number of these paths is~$\widetilde{m}.$ We denote
$$
h_j(s)=\rho^{\,*}\left(\alpha^*_j(s)\right)\chi_{I_j}(s)\,,\quad
s\in [0, l(\widetilde{\gamma})]\,,\quad J_s:=\{j\,|\,s\in I_j\}\,.
 $$
Note that the set
$$S_{m, i}=\{s\in [0, l(\widetilde{\gamma})]:
\widetilde{\gamma}^0(s)\in f(U_{m, i})\}$$
is open in $[0, l(\widetilde{\gamma})]$ as a preimage of an open set
$U_{m_i}$ under continuous mapping~$\gamma^*.$ Thus,
$\widetilde{\gamma}|_{S_{m, i}}$ is no more than a countable number
of open arcs, the length of each of which is calculated in
coordinates $(V^{\,\prime}_m, \psi_m)$ with the help of a hyperbolic
metrics (see notes made in the introduction). Denote
$\widetilde{\gamma}_{m, i}:=\widetilde{\gamma}|_{S_{m, i}}.$
According to the above, $\widetilde{\gamma}_{m,
i}=\bigcup\limits_{l=1}^{\infty}\widetilde{\gamma}^l_{m, i},$ where
$\widetilde{\gamma}^l_{m, i}$ is some open arc. (Since the path
$\widetilde{\gamma}$ was chosen to be closed, exactly two of the
indicated arcs may turn out to be half-open, however, we interpret
intervals of the form $[a, c)$ and $(c, b]$ as open sets with
respect to the segment $[a, b ]$). Since $f$ has $N$-property,
$\widetilde{h_*}(B_0^{\,**}\cup f(B_0^{\,*}))=0.$ Then, by
Lemma~\ref{lem1}, $[0,
l(\widetilde{\gamma})]=\bigcup\limits_{1\leqslant i\leqslant
I_0\atop 1\leqslant m\leqslant N_0}S_{m, i}\cup B_*,$ where $B_*$ is
some set of zero linear measure. In this case,
\begin{equation}\label{eq6}
\int\limits_{\widetilde{\gamma}}
\widetilde{\rho}(p_*)\,ds_{\widetilde{h_*}}(p_*)=\sum\limits_{1\leqslant
i\leqslant I_0\atop 1\leqslant m\leqslant N_0}\int\limits_{S_{m, i}}
\widetilde{\rho}(\widetilde{\gamma}^0(s))\,ds
\end{equation}
for almost all $\widetilde{\gamma}\in \Gamma^{\,\prime}.$ Since
$\widetilde{h_*}(f(B_0))=0,$ by Lemma~\ref{lem1}
$\widetilde{\gamma}^0(s)\not\in f(B_0)$ for almost all $s\in [0,
l(\widetilde{\gamma})]$ and almost all $\widetilde{\gamma}\in
\Gamma^{\,\prime}.$ Then for the same $s\in [0,
l(\widetilde{\gamma})]$ points $\alpha^{\,*}_j(s)\in
f^{\,-1}(\widetilde{\gamma}^0(s))$ corresponding to different $j\in
J_s$ are different, because $\alpha_j(s)=p$ is no more than for
$i(p, f)$ indices $j.$ Then, by the definition of the function
$\widetilde{\rho}$ in~(\ref{eq7.3.13})
\begin{equation}\label{eq6.1}
\widetilde{\rho}(\widetilde{\gamma}^0(s))\geqslant
\frac{1}{\widetilde{m}}\sum\limits_{j=1}^{\widetilde{m}} h_j(s)
\end{equation}
for almost all $s\in [0, l(\widetilde{\gamma})].$

By~(\ref{eq6.1}), we obtain that
$$
\int\limits_{S_{m, i}} \widetilde{\rho}(\widetilde{\gamma}^0(s))\,ds
\geqslant \frac{1}{\widetilde{m}}\sum\limits_{j=1}^{\widetilde{m}}
\int\limits_{S_{m, i}}h_j(s)\, ds=
$$
\begin{equation}\label{eq2*}
=
\frac{1}{\widetilde{m}}\sum\limits_{j=1}^{\widetilde{m}}\int\limits_{S_{m,
i,
j}}\rho^{\,*}\left(\alpha^*_j(s)\right)ds=\frac{1}{\widetilde{m}}\sum\limits_{j=1}^{\widetilde{m}}\int\limits_{S_{m,
i, j}} \frac{\rho(\alpha_j^{\,*}(s))}{l(f_{m,
i}^{\,\prime}(\varphi_i(\alpha_j^{\,*}(s))))}\,ds\,,
\end{equation}
for almost all $\widetilde{\gamma}$ and all the corresponding
$\alpha_j,$ $j=1,2,\ldots, \widetilde{m},$
where
$$\alpha_j:I_j\rightarrow {\Bbb B}^n/G,\qquad S_{m,
i, j}=I_j\cap S_{m, i}\,.$$
Since ${\widetilde{\gamma}}$ is rectifiable,
${\widetilde{\gamma}}^{\,0}$ is also rectifiable, in particular,
${\widetilde{\gamma}}^{\,0}(s)$ is almost everywhere differentiable
(see Lemma~\ref{lem2}). By Lemma~\ref{lem7}, each  $\alpha_j^{\,*},$
$j=1,2,\ldots, \widetilde{m},$ is absolutely continuous for almost
all~$\widetilde{\gamma}\in\Gamma^{\,\prime}.$

Since $\widetilde{\gamma}^0(s)\not\in f(B_0)$ for almost all $s\in
[0, l(\widetilde{\gamma})]$ and almost all paths
$\widetilde{\gamma},$ then $\alpha_j^{\,*}(s)\not\in B_0$ for almost
all $s\in S_{m, i, j}.$ Therefore, $\left(f_{m,
i}\left(\varphi_i(\alpha_j^{\,*}(s))\right)\right)^{\,\prime}$ and
$\left(\varphi_i(\alpha_j^{\,*}(s))\right)^{\,\prime}$ exist for
almost all $s\in S_{m, i, j}$ and every $1\leqslant i\leqslant I_0,$
$1\leqslant m\leqslant N_0$ and $1\leqslant
j\leqslant\widetilde{m}.$  Recall that $\widetilde{\gamma}_{m,
i}=\bigcup\limits_{l=1}^{\infty}\widetilde{\gamma}^l_{m, i},$ where
$\widetilde{\gamma}^l_{m, i}:=\widetilde{\gamma}|_{\Delta^l_{m, i}}$
and either $\Delta^l_{m, i}=(\alpha^l_{m, i}, \beta^l_{m, i}),$ or
$\Delta^l_{m, i}=[\alpha^l_{m, i}, \beta^l_{m, i}),$ or
$\Delta^l_{m, i}=(\alpha^l_{m, i}, \beta^l_{m, i}].$ Note that
\begin{equation}\label{eq11}
l_{\widetilde{\gamma}}(s)=\alpha^l_{m, i}+s_h(s)\qquad
\forall\,\,s\in \Delta^l_{m, i}\,, l=1,2,\ldots\,,
\end{equation}
where $s_h(s)$ denotes the hyperbolic length of the path
$\psi_m(\widetilde{\gamma}_{\Delta^l_{m, i}})$ on the segment
$[\alpha^l_{m, i}, s].$ By~(\ref{eq11}) and Lemma~\ref{lem2}, we
obtain that
\begin{equation}\label{eq12}
\left|\frac{d}{ds}\left(f_{m,
i}\left(\varphi_i(\alpha_j^{\,*}(s))\right)\right)\right|=\frac{1-|f_{m,
i}\left(\varphi_i(\alpha_j^{\,*}(s))\right)|^2}{2}\leqslant
\frac{1}{2}
\end{equation}
for almost all $s\in \Delta^l_{m, i}.$ On the other hand, according
to the rule of the derivative of the superposition of functions,
$$\left|\frac{d}{ds}\left(f_{m,
i}\left(\varphi_i(\alpha_j^{\,*}(s))\right)\right)\right|=$$
\begin{equation}\label{eq2A}
=|f_{m, i}^{\,\prime}(\varphi_i(\alpha_j^{\,*}(s)))\cdot
(\varphi_i(\alpha_j^{\,*}(s)))^{\,\prime}| =\left|f_{m,
i}^{\,\prime}(\varphi_i(\alpha_j^{\,*}(s)))\cdot\frac{(\varphi_i(\alpha_j^{\,*}(s)))^{\,\prime}}
{|(\varphi_i(\alpha_j^{\,*}(s)))^{\,\prime}|}\right|\cdot
|(\varphi_i(\alpha_j^{\,*}(s)))^{\,\prime}|\geqslant
\end{equation}
$$\geqslant l(f_{m,
i}^{\,\prime}(\varphi_i(\alpha_j^{\,*}(s))))\cdot|(\varphi_i(\alpha_j^{\,*}(s)))^{\,\prime}|
$$
for almost all ~$s\in \Delta^l_{m, i}\cap I_j.$
Combining~(\ref{eq12}) and (\ref{eq2A}), we obtain that
\begin{equation}\label{eq3A}
\frac{\rho(\alpha_j^{\,*}(s))}{l(f_{m,
i}^{\,\prime}(\varphi_i(\alpha_j^{\,*}(s))))}\geqslant
2\rho(\alpha_j^{\,*}(s))\cdot
|(\varphi_i(\alpha^{\,*}_j(s)))^{\,\prime}|\,.
\end{equation}
for almost all~$s\in S_{m, i}.$

Let $\alpha_j^0$ be a normal representation of $\alpha_j.$ Then,
since $f\in ACP^{\,-1},$ $\alpha_j^0(s_0)\not\in
f^{\,-1}(B_0^{\,**})\bigcup B_0^{\,*}$ for almost all $s_0\in [0,
l(\alpha_j)]$ and almost all $\widetilde{\gamma}=f\circ \alpha_j$
(see~\cite[Theorem~2.10.13]{Fe}). Denote
$$Q_{m, i, j}=\{s_0\in [0, l(\alpha_j)]:
\alpha_j(s_0)\in U_{m, i}\}\,.$$

By absolute continuity of $\alpha_j^{\,*}(s),$ $j=1,2,\ldots,
\widetilde{m},$ and also due to~(\ref{eq14}), (\ref{eq15}) and
(\ref{eq7A}), we obtain, that
$$1\leqslant
\int\limits_{\alpha_j}\rho(p)\,ds_{\widetilde{h}}(p)=\sum\limits_{1\leqslant
i\leqslant I_0\atop 1\leqslant m\leqslant N_0}\int\limits_{Q_{m, i,
j}}\rho(\alpha_j^0(s_0))\,ds_0=
$$
\begin{equation}\label{eq4A}
=\sum\limits_{1\leqslant i\leqslant I_0\atop 1\leqslant m\leqslant
N_0}\int\limits_{S_{m,
i}}\frac{2\rho(\alpha_j^{\,*}(s))|(\varphi_i(\alpha^{\,*}_j(s)))^{\,\prime}|}{1-|\varphi_i(\alpha_j^{\,*}(s))|^2}\,ds\leqslant\frac{2}{1-r_0^2}
\sum\limits_{1\leqslant i\leqslant I_0\atop 1\leqslant m\leqslant
N_0}\int\limits_{S_{m, i,
j}}\rho(\alpha_j^{\,*}(s))|(\varphi_i(\alpha^{\,*}_j(s)))^{\,\prime}|\,ds\,.
\end{equation}
Combining~(\ref{eq6}), (\ref{eq2*}), (\ref{eq3A}) and (\ref{eq4A}),
we conclude that
$\int\limits_{\widetilde{\gamma}}\widetilde{\rho}(p_*)\,ds_{\widetilde{h_*}}(p_*)\geqslant
1$ for almost all closed paths $\widetilde{\gamma}\in
\Gamma^{\,\prime}.$ The case of arbitrarily (not necessarily closed
$\widetilde{\gamma}$ may be obtained by taking $\sup$ in the
expression
$\int\limits_{\widetilde{\gamma}^{\,\prime}}\widetilde{\rho}(p)\,ds_{\widetilde{h_*}}(p_*)\geqslant
1$ over all closed sub-paths of $\widetilde{\gamma}^{\,\prime}.$
Therefore, $\frac{1}{1-r_0^2}\cdot\widetilde{\rho}\in{\rm
adm}\,\Gamma^{\,\prime}.$ So,
\begin{equation}\label{eq3*}
M\left(f\left(\Gamma\right)\right)\leqslant
\frac{1}{(1-r_0^2)^n}\int\limits_{D_*}
{\widetilde{\rho}}^{\,n}(p_*)\, dh_*(p_*)\,.
\end{equation}

\medskip
By~\cite[Theorem~3.2.5, $m=n$]{Fe}, we obtain that
$$\int\limits_{U_{m, i}\cap
B_k} K_I(p,
f)\cdot\rho^n(p)\,d\widetilde{h}(p)=$$$$=2^n\int\limits_{\varphi_i(U_{m,
i}\cap B_k)} \frac{\Vert(\psi_m\circ f\circ
\varphi^{\,-1}_i)^{\,\prime}(x)\Vert^n}{\det\{(\psi_m\circ f\circ
\varphi^{\,-1}_i)^{\,\prime}(x)\}(1-|x|^2)^n}\cdot\rho^n(\varphi^{\,-1}_i(x))\,dm(x)\geqslant$$
$$\geqslant 2^n\int\limits_{\varphi_i(U_{m,
i}\cap B_k)} \frac{\Vert(\psi_m\circ f\circ
\varphi^{\,-1}_i)^{\,\prime}(x)\Vert^n}{\det\{(\psi_m\circ f\circ
\varphi^{\,-1}_i)^{\,\prime}(x)\}}\cdot\rho^n(\varphi^{\,-1}_i(x))\,dm(x)=$$
\begin{equation} \label{eq7.3.14}
=2^n\int\limits_{\psi(f((U_{m, i}\cap
B_k)))}\frac{\rho^{\,n}\left((f_k^{\,-1}\circ\psi_m^{\,-1})(y)\right)}
{\left\{l\left(f^{\,\prime}\left((\varphi_i\circ
f^{\,-1}_k\circ\psi^{\,-1}_m)(y)\right)\right)\right\}^n}\,dm(y)\geqslant
\end{equation}
$$\geqslant (1-R_0^2)^n\int\limits_{f(D)}\rho^{n}_{k, i, m}(p_*)\,d\widetilde{h_*}(p_*)\,.$$
Finally, by Lebesgue's theorem, see~\cite[Theorem~I.12.3]{Sa}, and
taking into account~(\ref{eq3*}) and~(\ref{eq7.3.14}), we obtain
that
$$\frac{1}{\widetilde{m}}\cdot\int\limits_D K_I(p, f)\cdot\rho^n(p)\,d\widetilde{h}(p)=
\frac{1}{\widetilde{m}}\cdot\sum\limits_{{1\leqslant i\leqslant
I_0,}\,\,{1\leqslant m\leqslant N_0}\atop {1\leqslant k<
\infty}}\int\limits_{U_{m, i}\cap B_k} K_I(p,
f)\cdot\rho^n(p)\,d\widetilde{h}(p)\geqslant$$
$$\geqslant \frac{(1-R_0^2)^n}{\widetilde{m}}
\cdot\int\limits_{f(D)}\sum\limits_{{1\leqslant i\leqslant
I_0,}\,\,{1\leqslant m\leqslant N_0}\atop {1\leqslant k<
\infty}}\rho_{k, i, m}^n(p_*)\,d\widetilde{h_*}(p_*)\geqslant$$
$$\geqslant (1-R_0^2)^n\cdot
\int\limits_{f(D)}\sup \limits_{k_l\in{\Bbb N}, 1\leqslant
i_l\leqslant I_0, 1\leqslant m_l\leqslant N_0 \atop k_l\ne k_s,
i_l\ne i_s, m_l\ne m_s (l\ne s, s\leqslant
m)}\frac{1}{\widetilde{m}}\cdot\sum\limits_{l=1}^s\rho^n_{k_l, i_l,
m_l}(y)\,d\widetilde{h_*}(p_*)\geqslant$$
$$\geqslant (1-R_0^2)^n\cdot
\int\limits_{f(D)}\sup \limits_{k_l\in{\Bbb N}, 1\leqslant
i_l\leqslant I_0, 1\leqslant m_l\leqslant N_0 \atop k_l\ne k_s,
i_l\ne i_s, m_l\ne m_s (l\ne s, s\leqslant
m)}\left(\frac{1}{\widetilde{m}}\cdot\sum\limits_{l=1}^s\rho_{k_l,
i_l, m_l}(y)\right)^n\,d\widetilde{h_*}(p_*)=$$
$$=(1-R_0^2)^n\cdot\int\limits_{f(D)}{\widetilde{\rho}}^{\,n}(p_*)\,d\widetilde{h_*}(p_*)
\geqslant (1-R_0^2)^n(1-r_0^2)^n\cdot M(\Gamma^{\,\prime})\,.$$
In this case, the relation
\begin{equation}\label{eq1B}
M(\Gamma^{\,\prime})\leqslant \frac{c}{\widetilde{m}}\cdot
\int\limits_{D}K_I(p, f)\cdot\rho^n(p)\,d\widetilde{h}(p)
\end{equation}
holds for each $\rho\in {\rm adm\,}\Gamma,$ where
$c:=\frac{1}{(1-R_0^2)^n(1-r_0^2)^n}.$ Passing to the limit
in~(\ref{eq1B}) when $r_0$ and $R_0$ tend to zero, we obtain the
desired relation~(\ref{eq1A}). Theorem~\ref{th1} is proved.~$\Box$

\section{On estimates of the modulus of families of paths in the preimage under the mapping}

As in the case of the space~${\Bbb R}^n,$ in the factor spaces the
lower estimates of the distortion of the module are valid.
Their use is important for the study of mappings, see, for example,
~\cite[Theorem~3.2]{MRV}, 
\cite[Theorem~6.1]{MRSY$_1$}, \cite[Theorems~1.1, 2.1]{SalSev}
and~\cite[Theorem~6.2]{IS}.

\medskip
Let's start with preliminary remarks. For points $x, y\in {\Bbb
B}^n,$ we denote
\begin{equation}\label{eq7B}
L(x, f)=\limsup\limits_{y\rightarrow x}\frac{|f(x)-f(y)|}{|x-y|}\,.
\end{equation}
The following statement was obtained by V\"{a}is\"{a}l\"{a} in the
case of a Euclidean space, see~\cite[Theorem~5.3]{Va$_3$}.

\medskip
\begin{lemma}\label{lem8}
{\sl\,Let $D\subset {\Bbb B}^n,$ let $y_0\in {\Bbb B}^n,$ and let
$\Delta$ be an interval in ${\Bbb R}.$ Let $R_0>0$ be such that
$\overline{B(y_0, R_0)}\subset {\Bbb B}^n.$ Suppose that
$\alpha:\Delta\rightarrow {\Bbb B}^n$ is a locally rectifiable path,
such that the mapping $f:D\rightarrow B(y_0, R_0)$ is absolutely
continuous on any closed subpath of $\alpha.$ Then $f\circ\alpha$ is
locally rectifiable and, if $\rho: |f\circ \alpha|\rightarrow
\overline{\Bbb R}$ is a nonnegative Borel function, then
$$
\int\limits_{f\circ \alpha}\rho(y)\,ds_{h}(y)\leqslant
\frac{1}{1-R_0^2}\cdot\int\limits_{\alpha}\rho(f(x))L(x,
f)\,ds_h(x)\,,
 $$
where $L(x, f)$ is defined in~(\ref{eq7B}). }
\end{lemma}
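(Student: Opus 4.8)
The plan is to pass to the hyperbolic normal (arc length) representation of $\alpha$ and to reduce the whole statement to Corollary~\ref{cor2}. Since both the local rectifiability and the inequality concern closed subpaths, I would first fix a closed subpath and then recover the general (open or half open) case by letting the closed subinterval exhaust $\Delta$ and invoking the monotonicity of both integrals in the subpath. For the fixed closed subpath let $\alpha^{\,0}\colon[0,l(\alpha)]\to{\Bbb B}^n$ be its hyperbolic normal representation, so that $s_h(s)=s$ and, by Lemma~\ref{lem2}, $|(\alpha^{\,0})^{\,\prime}(s)|=\frac{1}{2}(1-|\alpha^{\,0}(s)|^2)$ for almost all $s$. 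The decisive feature, absent in the Euclidean arc length parametrization, is that $|\alpha^{\,0}(s)|\leqslant r_0<1$ forces $|(\alpha^{\,0})^{\,\prime}(s)|\geqslant\frac{1}{2}(1-r_0^2)>0$ almost everywhere, so the set $\{(\alpha^{\,0})^{\,\prime}=0\}$ is null and will cause no trouble below.

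For local rectifiability I would put $\psi:=f\circ\alpha^{\,0}$. The hypothesis that $f$ is absolutely continuous on the subpath means precisely that $\psi$ is absolutely continuous on $[0,l(\alpha)]$; hence $\psi$ has bounded variation and $f\circ\alpha$ has finite Euclidean length. As its image is a compact subset of ${\Bbb B}^n$, the density $2/(1-|y|^2)$ is bounded there, so the hyperbolic length is finite as well (cf. Lemma~\ref{lem3}). This establishes that $f\circ\alpha$ is locally rectifiable.

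For the estimate, since $\psi=f\circ\alpha^{\,0}$ is a reparametrization of $f\circ\alpha$, the line integral is unchanged, and Corollary~\ref{cor2} applied to the absolutely continuous path $\psi$ gives
$$\int\limits_{f\circ\alpha}\rho(y)\,ds_h(y)=\int\limits_0^{l(\alpha)}\frac{2\rho(\psi(s))\,|\psi^{\,\prime}(s)|}{1-|\psi(s)|^2}\,ds\,.$$
The central step is the almost everywhere inequality $|\psi^{\,\prime}(s)|\leqslant L(\alpha^{\,0}(s),f)\,|(\alpha^{\,0})^{\,\prime}(s)|$; at a point where $\psi^{\,\prime}(s)$ exists and $(\alpha^{\,0})^{\,\prime}(s)\neq 0$, I would factor the difference quotient of $\psi$ through $|\alpha^{\,0}(s+t)-\alpha^{\,0}(s)|$ and pass to the $\limsup$ as $t\to 0$, the first factor tending to at most $L(\alpha^{\,0}(s),f)$ by the definition~(\ref{eq7B}) and the second to $|(\alpha^{\,0})^{\,\prime}(s)|$. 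This is the main obstacle, and it is exactly here that $|(\alpha^{\,0})^{\,\prime}(s)|>0$ a.e. is needed so that no spurious factor appears on the right.

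Finally I would combine the three almost everywhere facts $|\psi^{\,\prime}(s)|\leqslant L(\alpha^{\,0}(s),f)|(\alpha^{\,0})^{\,\prime}(s)|$, $|(\alpha^{\,0})^{\,\prime}(s)|=\frac{1}{2}(1-|\alpha^{\,0}(s)|^2)$, and $1-|\psi(s)|^2=1-|f(\alpha^{\,0}(s))|^2\geqslant 1-R_0^2$ (the image lying in $B(0,R_0)$) to dominate the integrand by
$$\frac{2\rho(\psi(s))\,|\psi^{\,\prime}(s)|}{1-|\psi(s)|^2}\leqslant\frac{\rho(f(\alpha^{\,0}(s)))\,L(\alpha^{\,0}(s),f)\,(1-|\alpha^{\,0}(s)|^2)}{1-R_0^2}\leqslant\frac{\rho(f(\alpha^{\,0}(s)))\,L(\alpha^{\,0}(s),f)}{1-R_0^2}\,,$$
where the last step uses $1-|\alpha^{\,0}(s)|^2\leqslant 1$. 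Integrating over $[0,l(\alpha)]$ and recalling that $\int_{\alpha}g(x)\,ds_h(x)=\int_0^{l(\alpha)}g(\alpha^{\,0}(s))\,ds$ for the Borel integrand $g(x)=\rho(f(x))L(x,f)$ yields the claimed inequality with constant $1/(1-R_0^2)$; the exhaustion of $\Delta$ by closed subintervals then finishes the proof.
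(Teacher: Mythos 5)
Your proof is correct and reaches the stated constant, but it takes a genuinely different route from the paper's. The paper's argument is a three-line reduction to the Euclidean case: it rewrites $\int_{f\circ \alpha}\rho\,ds_h$ as $\int_{f\circ \alpha}\frac{2\rho(y)}{1-|y|^2}\,|dy|$, bounds the density by $\frac{2}{1-R_0^2}$ on the image ball, invokes \cite[Theorem~5.3]{Va$_3$} verbatim for the Euclidean line integrals, and then uses $|dx|\leqslant ds_h(x)$ to return to the hyperbolic length element on the source side. You instead inline the proof of V\"ais\"al\"a's theorem directly in the hyperbolic arc-length parametrization: the chain-rule-type bound $|\psi^{\,\prime}(s)|\leqslant L(\alpha^{\,0}(s),f)\,|(\alpha^{\,0})^{\,\prime}(s)|$ is exactly the heart of that theorem, and your observation that Lemma~\ref{lem2} forces $|(\alpha^{\,0})^{\,\prime}(s)|=\frac{1}{2}(1-|\alpha^{\,0}(s)|^2)\geqslant\frac{1}{2}(1-r_0^2)>0$ almost everywhere is a clean way to dispose of the $0\cdot\infty$ degeneracy that the Euclidean normal representation handles via $|(\gamma^0)^{\,\prime}|=1$ a.e. Your version is longer but self-contained, and it also makes the local rectifiability of $f\circ\alpha$ explicit, which the paper leaves buried in the citation. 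One caveat applies equally to both arguments: the bound $1-|\psi(s)|^2\geqslant 1-R_0^2$ (the paper's $\frac{1}{1-|y|^2}\leqslant\frac{1}{1-R_0^2}$) presupposes $|y|\leqslant R_0$, i.e. that the target ball is centered at the origin; for general $y_0$ the constant should involve $|y_0|+R_0$. Since the lemma is only ever applied to balls $B(0,R_m)$, this is harmless, but note that you inherited the paper's imprecision here rather than introducing a new one.
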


\medskip
\begin{proof}
By Corollary~\ref{cor2} and~\cite[Theorem~5.3]{Va$_3$}
$$\int\limits_{f\circ \alpha}\rho(y)\,ds_{h}(y)=\int\limits_{f\circ
\alpha}\frac{2\rho(y)}{1-|y|^2}\,|dy|\leqslant
\frac{1}{(1-R_0^2)}\cdot \int\limits_{f\circ
\alpha}2\rho(y)\,|dy|\leqslant$$
\begin{equation}\label{eq20}
\leqslant\frac{1}{(1-R_0^2)}\cdot\int\limits_{\alpha}\rho(f(x))L(x,
f)\,|dx|\leqslant
\frac{1}{1-R_0^2}\cdot\int\limits_{\alpha}\rho(f(x))L(x,
f)\,ds_h(x)\,,
\end{equation}
as required.~$\Box$
\end{proof}

\medskip
In what follows, the {\it inner dilation of the mapping
$f:D\rightarrow{\Bbb R}^n$ at the point $x\in D$} is defined by the
relation
\begin{equation}\label{eq21}
K_O(x,f)\quad =\quad\left\{
\begin{array}{rr}
\frac{\Vert f^{\,\prime}(x)\Vert^n}{|J(x, f)|}, & J(x,f)\ne 0,\\
1,  &  f^{\,\prime}(x)=0, \\
\infty, & \text{otherwise}
\end{array}
\right.\,.
\end{equation}
In the formula above, as usual, it is assumed that the corresponding
values are well-defined for almost all $x\in D$; for example, this
holds if the mapping $f$ is almost everywhere differentiable.

Suppose that $G$ is a group of M\"{o}bius transformations of the
unit ball ${\Bbb B}^n,$ $n\geqslant 2,$ onto itself, acting
discontinuously and not having fixed points in ${\Bbb B}^n.$ If we
are talking about the mapping $f$ of domains $D$ and $D_*,$
belonging to the factor spaces ${\Bbb B}^n/G$ и ${\Bbb B}^n/G_*,$
then we set $K_O(p,f)=K_O(\varphi(x), F),$ where $F=\psi\circ
f\circ\varphi^{-\,1},$ $(U, \varphi)$ are local coordinates of $x$
and $(V, \psi)$ are local coordinates of~$f(x).$ It is easy to show
that, just like $K_I(x, f),$ outher dilatation $K_O(p ,f)$ depends
only on the point $p\in {\Bbb B}^n/G$ and does not depend on local
coordinates. The following assertion holds.

\medskip
\begin{theorem}\label{th2}
{\sl\, Suppose that $G$ and $G_*$ are two groups of M\"{o}bius
transformations of the unit ball ${\Bbb B}^n,$ $n\geqslant 2,$ onto
itself, acting discontinuously on ${\Bbb B}^n$ and not having fixed
points in ${\Bbb B}^n.$ Let $D$ and $D_{\, *}$ be domains belonging
to ${\Bbb B}^n/G$ and ${\Bbb B}^n/G_*,$ respectively, and having
compact closures $\overline{D}$ and $\overline{D_*}.$ Let
$f:D\rightarrow D_{\,*}$ be an open discrete almost everywhere
differentiable map, $f\in ACP,$ having $N$ and $N^{\,-1}$-Luzin
properties. Suppose that $\Gamma$ is a family of paths in $D.$ Then
\begin{equation}\label{eq1D}
M(\Gamma)\leqslant \int\limits_{f(D)}K_I(p_*,
f^{\,-1})\cdot\rho_*^n(p_*)\,d\widetilde{h_*}(p_*)\qquad
\forall\,\,\rho_*\in{\rm adm\,}f(\Gamma)\,,
\end{equation}
where~$K_I(p_*, f^{\,-1}):=\sum\limits_{p\in D\cap f^{\,-1}(p_*)}K_O
(p, f)\,.$}
\end{theorem}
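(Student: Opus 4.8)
The plan is to produce, for every $\rho_*\in{\rm adm}\,f(\Gamma),$ a function admissible for $\Gamma$ whose $L^n$-norm with respect to $\widetilde{h}$ is controlled by the right-hand side of~(\ref{eq1D}); this is the ''inverse'' counterpart of the proof of Theorem~\ref{th1}, with Lemma~\ref{lem8} now playing the role that the $ACP^{\,-1}$-estimate played there. I would keep the same finite covering of $\overline{D}$ and $\overline{D_*}$ by normal neighborhoods $U_i,$ $1\leqslant i\leqslant I_0,$ and $V_m,$ $1\leqslant m\leqslant N_0,$ with coordinate maps $\varphi_i:U_i\to B(0,r_i)$ and $\psi_m:V_m\to B(0,R_m),$ the disjointifications $U_i^{\,\prime},$ $V_m^{\,\prime},$ and the sets $U_{m,i}=f^{\,-1}(V_m^{\,\prime})\cap U_i^{\,\prime}$ exactly as in the proof of Theorem~\ref{th1}. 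By Proposition~\ref{pr1} there is a countable pairwise disjoint decomposition $D\setminus B_0=\bigcup_k B_k$ with $\widetilde{h}(B_0)=0,$ on each piece of which $f$ is injective, differentiable and bilipschitz in local coordinates with $J\neq 0;$ here $B_0=B\cup B_f$ also contains the branch set, so that $\widetilde{h_*}(f(B_0))=0$ by the $N$-property.

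Next I would define $\rho:{\Bbb B}^n/G\to[0,\infty]$ by $\rho(p)=\rho_*(f(p))\,\|f_{m, i}^{\,\prime}(\varphi_i(p))\|$ for $p\in U_{m, i}\setminus B_0,$ where $f_{m, i}=\psi_m\circ f\circ\varphi_i^{\,-1},$ and $\rho(p)=\infty$ on $B_0.$ The admissibility is the path-integral step: for a rectifiable $\gamma\in\Gamma$ with $\widetilde{\gamma}=f\circ\gamma,$ decompose $\gamma$ into the pieces lying in the various $U_{m, i}$ by means of the $S_{m, i}$-splitting of Theorem~\ref{th1}, and on each piece apply Lemma~\ref{lem8} to the coordinate map $f_{m, i},$ which takes values in $B(0,R_m).$ Since $\|f_{m, i}^{\,\prime}\|=L(\,\cdot\,,f_{m, i})$ at points of differentiability, summing the resulting inequalities yields $\int_{\widetilde{\gamma}}\rho_*\,ds_{\widetilde{h_*}}\leqslant\frac{1}{1-R_0^2}\int_\gamma\rho\,ds_{\widetilde{h}},$ whence $\int_\gamma\rho\,ds_{\widetilde{h}}\geqslant 1-R_0^2$ because $\rho_*\in{\rm adm}\,f(\Gamma).$ By the $ACP$-property $f$ is absolutely continuous on almost all $\gamma,$ and by Lemma~\ref{lem1} almost every $\gamma$ meets $B_0$ (respectively $\widetilde{\gamma}$ meets $f(B_0)$) in a set of zero length; the paths violating these fall into a family of zero modulus, while any path meeting $B_0$ in positive length has $\int_\gamma\rho\,ds_{\widetilde{h}}=\infty$ by construction. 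Hence $\frac{1}{1-R_0^2}\,\rho\in{\rm adm}\,\Gamma$ up to a family of zero modulus, and by subadditivity of the modulus $M(\Gamma)\leqslant\frac{1}{(1-R_0^2)^n}\int_D\rho^n\,d\widetilde{h}.$

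It then remains to bound the volume integral. On each $U_{m, i}\cap B_k$ I would pass to coordinates via $\varphi_i,$ use $\|f_{m, i}^{\,\prime}\|^n=K_O(p,f)\,|J(x,f_{m, i})|$ from~(\ref{eq21}), and change variables $y=f_{m, i}(x)$ by the area formula~\cite[Theorem~3.2.5, $m=n$]{Fe}, which has a single summand since $f|_{B_k}$ is injective. Comparing the conformal factors through the crude bound $(1-|y|^2)^n/(1-|x|^2)^n\leqslant(1-r_0^2)^{-n}$ converts the weight $\frac{2^n\,dm(x)}{(1-|x|^2)^n}$ into the target element $d\widetilde{h_*}$ up to the constant $(1-r_0^2)^{-n},$ giving $\int_{U_{m, i}\cap B_k}\rho^n\,d\widetilde{h}\leqslant(1-r_0^2)^{-n}\int_{f(U_{m, i}\cap B_k)}K_O(p,f)\,\rho_*^n(p_*)\,d\widetilde{h_*}(p_*).$ Summing over $m,i,k$ and using that for almost every $p_*$ the preimages in $B_0$ are negligible (as $\widetilde{h_*}(f(B_0))=0$), the inner sum $\sum_{m,i,k}\chi_{f(U_{m, i}\cap B_k)}(p_*)\,K_O(\,\cdot\,,f)$ collapses to $\sum_{p\in D\cap f^{\,-1}(p_*)}K_O(p,f)=K_I(p_*,f^{\,-1}).$ This produces $M(\Gamma)\leqslant\frac{1}{(1-R_0^2)^n(1-r_0^2)^n}\int_{f(D)}K_I(p_*,f^{\,-1})\,\rho_*^n(p_*)\,d\widetilde{h_*}(p_*),$ and refining the covering so that $r_0,R_0\to 0$ sends the constant to $1$ and yields~(\ref{eq1D}).

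The step I expect to be the main obstacle is the reassembly of the local change-of-variables contributions into the single sum $K_I(p_*,f^{\,-1})$: one must verify that the countable family $\{B_k\cap U_{m, i}\}$ genuinely exhausts $f^{\,-1}(p_*)\cap(D\setminus B_0)$ for almost every $p_*$ and that the discarded sets $B_0$ and $f(B_0)$ contribute nothing, which is precisely where Proposition~\ref{pr1} together with the $N$- and $N^{\,-1}$-Luzin properties are indispensable. The admissibility step, although routine once Lemma~\ref{lem8} is available, still requires the careful $S_{m, i}$-localization inherited from the proof of Theorem~\ref{th1}.
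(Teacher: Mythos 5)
Your proposal follows essentially the same route as the paper's own proof: the same test function $\rho(p)=\rho_*(f(p))\,\Vert f_{m,i}^{\,\prime}(\varphi_i(p))\Vert$ on $U_{m,i}\setminus B_0,$ admissibility up to the factor $1-R_0^2$ via Lemma~\ref{lem8} on the $U_{m,i}$-pieces, the change of variables by~\cite[Theorem~3.2.5]{Fe} on each injectivity piece $B_k\cap U_{m,i}$ with the comparison of conformal factors yielding $(1-r_0^2)^n,$ and the final passage $r_0,R_0\to 0.$ The only cosmetic deviation (setting $\rho=\infty$ on $B_0$ rather than $0$ and relying on Lemma~\ref{lem1}) does not change the argument.
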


\medskip
\begin{proof} By Remark~\ref{rem1}, $D$ may
be represented in a form of at most countable union of balls of the
form~$\overline{B(x_k, r_k)},$ each of which is compact in ${\Bbb
B}^n/G.$ Thus, $f(D)$ $\sigma$-compact in ${\Bbb B}^n/G_*,$ in
particular, $f(D)$ is Borel.

\medskip
Let $C_k^{\,*},$ $B_k,$ $U_m,$ $V_i$ and $U_{mi}$ correspond to the
notation used in the proof of Theorem~\ref{th1}.

\medskip
Without loss of generality, we may assume that all paths of the
family $\Gamma$ are locally rectifiable. Let $\gamma\in \Gamma$ and
let $\gamma^{\,0}$ be a normal representation of $\gamma.$ By
Lemma~\ref{lem1}, $\gamma^{\,0}(s)\not\in B_0$ for almost all $s\in
[0, l(\gamma)]$ and almost all $\gamma\in \Gamma.$ Since, by the
assumption, $f\in ACP,$ then $f\circ\gamma^{0}$ is locally
rectifiable and absolutely continuous for almost
all~$\gamma\in\Gamma.$
\medskip

As before, for a domain $D_0\subset {\Bbb R}^n$ and a mapping
$\varphi:D_0\rightarrow{\Bbb R}^n$ we use the notation $\Vert
\varphi^{\,\prime}(y)\Vert=\max \{ |f^{\,\prime}(y)h|: h\in{\Bbb
R}^n, |h|=1\}.$ We set
$$f_{m, i}(x):=(\psi_m\circ f\circ\varphi_i^{\,-1})(x)\,,\quad x=\varphi_i(p)\,,\quad \in p\in U_{m, i}\,,$$
$$f_k:=f_{B_k}\,.$$
Let $\rho_*\in{\rm adm}\,f(\Gamma),$ then set
%
$$\rho (p)= \left \{\begin{array}{rr}
\rho_*(f(p))\Vert f_{m, i}^{\,\prime}(\varphi_i(p))\Vert, &  \quad p\in U_{m, i}\setminus B_0, \\
0, & \text{в других случаях.}\end{array} \right. 
$$
Let
$$A_{m, i}=\{s\in [0, l(\gamma)]: s\in U_{m, i}\}\,,\quad |\gamma|\cap U_{m, i}:=\gamma|_{A_{m, i}}\,.$$
Then
\begin{equation}\label{eq22}
\int\limits_{\gamma}
\rho(p)\,ds_{\widetilde{h}}(p)=\sum\limits_{1\leqslant i\leqslant
I_0\atop 1\leqslant m\leqslant N_0}\int\limits_{A_{m,
i}}\rho(\gamma^{\,0}(s))\,ds\,.
\end{equation}
By Lemma~\ref{lem8}
$$\int\limits_{A_{m,
i}}\rho(\gamma^{\,0}(s))\,ds=\int\limits_{A_{m, i}}
\rho_*(f(\gamma^{\,0}(s)))\Vert f_{m,
i}^{\,\prime}(\varphi_i(\gamma^{\,0}(s)))\Vert\,ds\geqslant
$$
\begin{equation} \label{eq7.3.7b}
\geqslant (1-R_0^2)\cdot\int\limits_{\psi_m\circ f\circ\gamma}
(\rho_*\circ\psi^{\,-1}_m)(y)\,ds_h(y)=(1-R_0^2)\cdot\int\limits_{(f\circ\gamma)|_{A_{m,
i}}} \rho_*(y)\,ds_{\widetilde{h}}(y)\,.
\end{equation}
By~(\ref{eq22}) and~(\ref{eq7.3.7b}), taking into account that
$\rho_*\in{\rm adm}\,f(\Gamma),$ we obtain that
\begin{equation}\label{eq23}
\int\limits_{\gamma} \rho(p)\,ds_{\widetilde{h}}(p)\geqslant 1-R_0^2
\end{equation}
for almost all $\gamma\in \Gamma.$
Thus,
\begin{equation} \label{eq7.3.7z} M_p(\Gamma)\leqslant\frac{1}{{(1-R_0^2)}^n}\int\limits_D
\rho^n(p)\, d\widetilde{h}(p)\,. \end{equation}
Note that $\rho=\sum\limits_{k\geqslant 1, 1\leqslant i\leqslant
I_0\atop 1\leqslant m\leqslant M_0}\limits^{\infty}\rho_{k, m, i}$,
where $\rho_{k, m, i}=\rho\cdot\chi_{B_k\cap U_{m, i}}$ are
functions that have pairwise disjoint supports.
By~\cite[Theorem~3.2.5, $m=n$]{Fe}, we obtain that
$$\int\limits_{f(B_k\cap U_{m, i})} K_O\left(f_k^{\,-1}(p_*),
f\right)\cdot\rho_*^n(p_*)\,d\widetilde{h_*}(p_*)=$$$$
2^n\int\limits_{\psi_m(f(B_k\cap U_{m, i}))} K_O\left(f_{k, i,
m}^{\,-1}(y), f_{k, i,
m}\right)\cdot\frac{\rho_*^n(\psi^{\,-1}_m(y))}{{(1-|y|^2)}^n}\,dm(y)\geqslant$$
$$
2^n\int\limits_{\psi_m(f(B_k\cap U_{m, i}))} K_O\left(f_{k, i,
m}^{\,-1}(y), f_{k, i,
m}\right)\cdot\rho_*^n(\psi^{\,-1}_m(y))\,dm(y)=$$
$$
2^n\int\limits_{\varphi_i(B_k\cap U_{m, i})} K_O\left(x, f_{k, i,
m}\right)\cdot\rho_*^n(\varphi_i^{\,-1}(x))|J(x, f_{k, i,
m})|\,dm(x)=$$
$$=2^n\int\limits_{\varphi_i(B_k\cap U_{m, i})}
{\Vert f^{\,\prime}_{k, i, m}(x)\Vert}^n
\cdot\rho_*^n(\varphi_i^{\,-1}(x))\,dm(x)=$$
$$=2^n\int\limits_{\varphi_i(B_k\cap U_{m, i})} {\Vert f^{\,\prime}_{k,
i, m}(x)\Vert}^n
\cdot\rho_*^n(\varphi_i^{\,-1}(x))\cdot\frac{{(1-|x|^2)}^n}{{(1-|x|^2)}^n}\,dm(x)\geqslant
$$
$$\geqslant2^n{(1-r_0^2)}^n\cdot \int\limits_{\varphi_i(B_k\cap
U_{m, i})} {\Vert f^{\,\prime}_{k, i, m}(x)\Vert}^n
\cdot\frac{\rho_*^n(\varphi_i^{\,-1}(x))}{{(1-|x|^2)}^n}\,dm(x)=$$
\begin{equation}\label{eq7.3.7x}={(1-r_0^2)}^n\cdot \int\limits_D \rho^n_{k, m,
i}(p)\,d\widetilde{h}(p)\,.
\end{equation}

\medskip
Finally, by the Lebesgue convergence theorem,
see~\cite[теорема~I.12.3]{Sa}, taking into account~(\ref{eq7.3.7z})
and (\ref{eq7.3.7x}), we obtain that
$$\int\limits_{f(D)} K_I(y, f^{\,-1})\cdot\rho_*^n(p_*)\,d\widetilde{h_*}(p_*)=
{(1-r_0^2)}^n\cdot\int\limits_D \sum\limits_{k\geqslant 1,
1\leqslant i\leqslant I_0\atop 1\leqslant m\leqslant
M_0}\limits^{\infty}\rho_k^n(p)\,d\widetilde{h}(p)\geqslant$$$$\geqslant
{(1-r_0^2)}^n\cdot {(1-R_0^2)}^n\cdot M(\Gamma)\,.$$
From the last relation it follows that
$$M(\Gamma)\leqslant \frac{1}{{(1-r_0^2)}^n{(1-R_0^2)}^n}
\cdot \int\limits_{f(D)} K_I(y, f^{\,-1})\cdot\rho_*^n(p_*)\,d\widetilde{h_*}(p_*)\,.$$
Passing now to the limit at $r_0\rightarrow 0$ and $R_0\rightarrow
0,$ we obtain the desired inequality~(\ref{eq1D}).~$\Box$
\end{proof}

\medskip
\section{Local and boundary behavior of mappings on
factor-spaces}

In conclusion, we consider the application of Theorem~\ref{th1} to
the question of the boundary behavior of maps. Let $D$ be a domain
in ${\Bbb B}^n/G,$ and let $E,$ $F\subset D$ be arbitrary sets. In
what follows, by $\Gamma(E,F, D)$ we denote the family of all paths
$\gamma:[a,b]\rightarrow D$ which join $E$ and $F$ in $D,$ i.e.,
$\gamma(a)\in E,\,\gamma(b)\in F$ and $\gamma(t)\in D$ for
$t\in(a,\,b).$ We agree to say that the boundary $\partial D$ of a
domain $D$ is {\it strongly accessible at $p_0\in
\partial D$}, if for each neighborhood $U$ of $p_0$ there is a
compactum $E\subset D,$ a neighborhood $V\subset U$ of the same
point and the number $\delta
>0$ such that $M(\Gamma(E, F, D))\geqslant \delta$ for any continua
$E$ and $F$ in $D,$ that intersect both $\partial U$ and $\partial
V.$ We also say that the boundary $\partial D$  is {\it strongly
accessible}, if it is strongly achievable at each of its points.
Given $E\subset {\Bbb B}^n/G,$ we set, as usual,
$$C(f, E)=\{p_*\in{\Bbb B}^n/G_*: \exists\,\, p_k\in D, p\in \partial E:
p_k\rightarrow p, f(p_k)\rightarrow p_*, k\rightarrow\infty\}\,.$$
Following~\cite[section~2]{IR} (see
also~\cite[section~6.1]{MRSY$_2$}), we say that the function
${\varphi}:D\rightarrow{\Bbb R}$ has a {\it finite mean oscillation}
at $p_0\in \overline{D},$ abbr. $\varphi\in FMO(p_0),$ if
%
%
%
%
$$\limsup\limits_{\varepsilon\rightarrow
0}\frac{1}{\widetilde{h}(\widetilde{B}(p_0,
\varepsilon))}\int\limits_{\widetilde{B}(p_0,\,\varepsilon)}
|{\varphi}(p)-\overline{\varphi}_{\varepsilon}|\
d\widetilde{h}(p)<\infty\,,$$
%
%
where $\overline{{\varphi}}_{\varepsilon}=\frac{1}
{\widetilde{h}(\widetilde{B}(p_0,
\varepsilon))}\int\limits_{\widetilde{B}(p_0, \varepsilon)}
{\varphi}(p) \,d\widetilde{h}(p).$ The following statement holds.

\medskip
\begin{theorem}\label{th3}{\sl\,  Suppose that $G$ and $G_*$ are two groups of
M\"{o}bius transformations of the unit ball ${\Bbb B}^n,$
$n\geqslant 2,$ onto itself, acting discontinuously on ${\Bbb B}^n$
and not having fixed points in ${\Bbb B}^n.$ Let $D$ and $D_{\, *}$
be domains belonging to ${\Bbb B}^n/G$ and ${\Bbb B}^n/G_*,$
respectively, and having compact closures $\overline{D}$ and
$\overline{D_*}.$ Let $f:D\rightarrow D_{\,*}$ be an open discrete
almost everywhere differentiable map, $f\in ACP^{\,-1},$ having $N$
and $N^{\,-1}$-Luzin properties. Suppose that $D$ is locally path
connected at $b\in \partial D,$ $C(f,
\partial D)\subset \partial D^{\,\prime},$ and $\partial D^{\,\prime}$
is strongly accessible at least at one of the points~$y\in C(f, b).$
If $K_I(p, f)\leqslant Q(p)$ for almost all $p\in D,$ where $Q:{\Bbb
B}^n/G\rightarrow[0,\infty]$ is some function such that $Q\in
FMO(b),$ then $C(f, b)=\{y\}.$
}
\end{theorem}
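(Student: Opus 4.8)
The plan is to argue by contradiction, setting the lower bound supplied by strong accessibility against an upper bound for the modulus of the \emph{same} family that is forced to zero by the $FMO$-condition through Corollary~\ref{cor3}. Suppose $C(f,b)\neq\{y\}$ and fix a second point $z\in C(f,b)$ with $z\neq y$. Since $b\in\partial D$ we have $C(f,b)\subset C(f,\partial D)\subset\partial D_{\,*}$, so $y\in\partial D_{\,*}$ and strong accessibility at $y$ applies. Choosing a neighbourhood $U$ of $y$ with $z\notin\overline U$, it produces a compactum $E\subset D_{\,*}$, a neighbourhood $V\subset U$ of $y$ and a number $\delta>0$ such that $M(\Gamma(E,F,D_{\,*}))\geqslant\delta$ for every continuum $F$ in $D_{\,*}$ meeting both $\partial U$ and $\partial V$. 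Using that $D$ is locally path connected at $b$, together with sequences $p_k\to b$, $f(p_k)\to y$ and $p_k^{\,\prime}\to b$, $f(p_k^{\,\prime})\to z$, I would, for each small $\varepsilon>0$ and all sufficiently large $k$, join $p_k$ and $p_k^{\,\prime}$ by a continuum $E_k\subset\widetilde B(b,\varepsilon)\cap D$. Then $f(E_k)$ is a continuum in $D_{\,*}$ containing $f(p_k)\in V$ and $f(p_k^{\,\prime})\notin\overline U$, hence meeting both $\partial U$ and $\partial V$, so that $M(\Gamma(E,f(E_k),D_{\,*}))\geqslant\delta$.

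Next I would transfer this family back to $D$. Because $E$ is compact in the interior $D_{\,*}$ while $C(f,\partial D)\subset\partial D_{\,*}$, the set $f^{\,-1}(E)$ cannot accumulate at $b$, so there is $\varepsilon_0\in(0,1)$ with $f^{\,-1}(E)\cap\widetilde B(b,\varepsilon_0)=\varnothing$. Fix $\varepsilon<\varepsilon_0$ and set $\Gamma_\varepsilon:=\Gamma(E_k,f^{\,-1}(E),D)$. The key claim is that every $\beta\in\Gamma(E,f(E_k),D_{\,*})$ has a \emph{total} lifting $\gamma$ with $f\circ\gamma=\beta$ lying in $\Gamma_\varepsilon$: starting a maximal lifting at the endpoint of $\beta$ over $f(E_k)\subset\widetilde B(b,\varepsilon)$ (which exists since $f$ is open and discrete), the lift either terminates over the endpoint of $\beta$ in $E$, yielding a path from $E_k$ to $f^{\,-1}(E)$, or tends to $\partial D$; the latter is impossible, since then $\beta=f\circ\gamma$ would have to approach $\partial D_{\,*}$ by $C(f,\partial D)\subset\partial D_{\,*}$, whereas $\beta$ stays in the interior of $D_{\,*}$. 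Hence $\Gamma(E,f(E_k),D_{\,*})\subset f(\Gamma_\varepsilon)$, and monotonicity of the modulus gives $\delta\leqslant M(\Gamma(E,f(E_k),D_{\,*}))\leqslant M(f(\Gamma_\varepsilon))$.

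Finally I would bound $M(f(\Gamma_\varepsilon))$ from above. Put $\eta(t)=1/(t\log(1/t))$ and $I(\varepsilon)=\int_\varepsilon^{\varepsilon_0}\eta(t)\,dt$, and let $\rho_\varepsilon(p)=\eta(\widetilde h(b,p))/I(\varepsilon)$ on the ring $\varepsilon<\widetilde h(b,p)<\varepsilon_0$ and $\rho_\varepsilon(p)=0$ elsewhere. Every $\gamma\in\Gamma_\varepsilon$ joins $\widetilde B(b,\varepsilon)$ to the complement of $\widetilde B(b,\varepsilon_0)$ and so crosses this ring, whence $\rho_\varepsilon\in{\rm adm}\,\Gamma_\varepsilon$. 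Corollary~\ref{cor3} then yields
\[
M(f(\Gamma_\varepsilon))\leqslant\int_D K_I(p,f)\,\rho_\varepsilon^{\,n}(p)\,d\widetilde h(p)\leqslant\frac{1}{I(\varepsilon)^n}\int_{\{\varepsilon<\widetilde h(b,p)<\varepsilon_0\}}Q(p)\,\eta^{\,n}(\widetilde h(b,p))\,d\widetilde h(p).
\]
Working in a normal neighbourhood of $b$, where by Lemma~\ref{lem6} and Lemma~\ref{lem3} the metric $\widetilde h$ coincides with $h$ and is comparable to the Euclidean one, I would invoke the classical $FMO$ integral estimate to bound the last integral by $C\cdot I(\varepsilon)$. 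Since $I(\varepsilon)=\log\log(1/\varepsilon)-\log\log(1/\varepsilon_0)$, this gives $M(f(\Gamma_\varepsilon))\leqslant C/(\log\log(1/\varepsilon))^{\,n-1}\to0$ as $\varepsilon\to0$, contradicting $M(f(\Gamma_\varepsilon))\geqslant\delta$. Therefore $C(f,b)=\{y\}$.

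I expect the main obstacle to be the completeness of the lifting in the second step: one must combine openness and discreteness of $f$ (for the existence of maximal liftings) with the global hypothesis $C(f,\partial D)\subset\partial D_{\,*}$ to exclude escape of the lift to $\partial D$, and it is precisely here that the boundary hypotheses, rather than purely local data, enter. The $FMO$ estimate is the other delicate point, but it reduces to the standard Euclidean lemma once the integral is transported through the conformal charts furnished by Proposition~\ref{pr1A}.
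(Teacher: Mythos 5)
Your proof is correct, and every ingredient it needs is available in the paper's toolkit; but it is organized quite differently from the paper's own proof, which is not self-contained. The paper uses Theorem~\ref{th1} to establish the ring inequalities~(\ref{eq15A})--(\ref{eq16A}) for an \emph{arbitrary} admissible $\eta$ (in the language of this literature, that $f$ is a ring $Q$-mapping at $b$), then records two facts --- maximal $f$-liftings of paths exist (\cite[Lemma~2.1]{SM}) and ${\Bbb B}^n/G$ is locally Ahlfors regular (Lemma~\ref{lem3}) --- and concludes by citing the abstract metric-space result \cite[Theorem~5]{Sev}, inside which the entire strong-accessibility/FMO contradiction is carried out. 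Your argument opens up that black box and proves the statement directly: the lower bound $\delta$ from strong accessibility applied to the continua $f(E_k)$, the transfer of the path family back to $D$ by total liftings (with $C(f,\partial D)\subset\partial D_{\,*}$ used twice, once to prevent a maximal lifting from escaping to $\partial D$ and once to keep $f^{\,-1}(E)$ away from $b$), and the decay $M(f(\Gamma_\varepsilon))\lesssim(\log\log(1/\varepsilon))^{1-n}$ obtained from Corollary~\ref{cor3} with $\eta(t)=1/(t\log(1/t))$. This is precisely the content of the theorem the paper cites, so the two proofs agree in substance; yours is longer but makes visible exactly where each hypothesis of Theorem~\ref{th3} enters, while the paper's buys brevity at the price of an external reference whose hypotheses must be checked. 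The one step you should tighten is the FMO estimate: FMO is a condition on averages over balls, so it does not automatically survive a bi-Lipschitz change of metric and measure, and ``transporting the classical Euclidean lemma'' through comparability alone is not a complete justification. Here the transfer is legitimate for a more special reason: after normalizing the lift of $b$ to the origin by a M\"{o}bius map, the $\widetilde{h}$-balls centered at $b$ correspond, via Lemma~\ref{lem6}, to hyperbolic balls centered at $0$, which are themselves Euclidean balls centered at $0$, and the hyperbolic density $2^n(1-|x|^2)^{-n}$ is bounded above and below near $0$, so both the mean-oscillation condition and the weighted integral transform with uniformly bounded distortion; alternatively, one can invoke the Ahlfors-regular version of the FMO lemma from \cite{Sev} or \cite{IS}, which is what the paper's citation implicitly does.
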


\medskip
\begin{proof}
Let $$A=A(b, r_1, r_2)=\{p\in {\Bbb B}^n/G: r_1<d(p, p_0)<r_2\},
\quad 0 < r_1 < r_2 <\infty\,.$$
By Theorem~\ref{th1}, $f$ satisfies relation~(\ref{eq1B}) for each
family of paths $\Gamma$ in $D.$ In particular, the condition
\begin{equation}\label{eq15A}
M(f(\Gamma(C_1, C_0, A)))\leqslant\int\limits_{A\cap
D}Q(p)\cdot\rho^n(p)\,d\widetilde{h}(p)\quad\forall\,\rho\in{\rm
adm\,}\Gamma(C_1, C_0, A)\,,
\end{equation}
holds for any two continua~$C_0\subset \overline{\widetilde{B}(b,
r_1)}$ and $C_1\subset {\Bbb B}^n/G\setminus \widetilde{B}(b, r_2).$

Let $\eta:(r_1, r_2)\rightarrow [0, \infty]$ be an arbitrary
Lebesgue measurable function satisfying the condition
$\int\limits_{r_1}^{r_2}\eta(t)\,dt\geqslant 1.$ Set
$\rho(x)=\eta(\widetilde{h}(p, p_0)).$ In this case,
by~\cite[предложение~13.4]{MRSY$_2$},
$\int\limits_{\gamma}\rho(p)\,ds_{\widetilde{h}}(p)\geqslant 1$ for
an arbitrary (locally rectifiable) path $\gamma.$ Thus,
\begin{equation}\label{eq16A}
M(f(\Gamma(C_1, C_0, A)))\leqslant\int\limits_{A\cap
D}Q(p)\cdot\eta({\widetilde{h}}^{n}(p, p_0))\,d\widetilde{h}(p)\,.
\end{equation}
Note that each $\beta:[a, b)\rightarrow D_*$ has a maximal
$f$-lifting in $D$ starting at $x\in f^{\,-1}(\beta(a)),$
see~\cite[Lemma~2.1]{SM}. By Lemma~\ref{lem3}, ${\Bbb B}^n/G$ is
locally Ahlfors regular space. Thus, the necessary conclusion
follows from~\cite[теорема~5]{Sev}. ~$\Box$
\end{proof}

\medskip
Recall that a family $\frak{F}$ of mappings $f\colon X\rightarrow
{X}^{\,\prime}$ is called {\it equicontinuous at a point} $x_0\in
X,$ if for any $\varepsilon>0$ there is $\delta>0$ such that
${d}^{\,\prime} \left(f(x),f(x_0)\right)<\varepsilon$ for all $x\in
D$ with $d(x,x_0)<\delta$ and all $f\in \frak{F}.$  We say, that
$\frak{F}$ is {\it equicontinuous in $D$}, if $\frak{F}$ is
equicontinuous at every point $x_0\in X.$ As one of the possible
applications of Theorem~\ref{th1}, we also give the following
statement.

\medskip
\begin{theorem}\label{theor4*!} {\sl\, Suppose that $G$ and $G_*$ are two groups of
M\"{o}bius transformations of the unit ball ${\Bbb B}^n,$
$n\geqslant 2,$ onto itself, acting discontinuously on ${\Bbb B}^n$
and not having fixed points in ${\Bbb B}^n.$ Let $D$ and $D_{\, *}$
be domains belonging to ${\Bbb B}^n/G$ and ${\Bbb B}^n/G_*,$
respectively, and having compact closures $\overline{D}$ and
$\overline{D_*}.$ Let also $p_0\in D,$ $B_R$ be a ball in $D_*$ and
$Q:D\rightarrow [0, \infty]$ be a function measurable with respect
to the measure $\widetilde{h}.$ Suppose that ${\Bbb B}^n/G_*$ is
$n$-Ahlfors regular space with $(1; n)$-Poincar\'{e} inequality.

Denote by $\frak{R}_{p_0, Q, B_R, \delta}(D)$ the family of all
almost differentiable homeomorphisms $f:D\rightarrow B_R,$ belonging
to the class~$ACP^{\,-1},$ and possessing $N$ and $N^{\,-1}$-Luzin
properties, for which: 1) there is a continuum $K_f\subset B_R
\setminus f(D),$ satisfying the condition $\sup\limits_{x, y\in K_f}
d^{\,\prime}(x, y)\geqslant \delta>0;$ 2) $K_I(p, f)\leqslant Q(p)$
for almost all $p\in D.$ If $Q\in FMO(p_0),$ then the family
$\frak{R}_{p_0, Q, B_R, \delta}(D)$ is equicontinuous at $p_0\in D.$
 }
\end{theorem}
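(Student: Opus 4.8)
The plan is to argue by contradiction, pairing the upper modulus estimate of Corollary~\ref{cor3} with a Loewner-type lower bound extracted from the omitted continua. Suppose $\frak{R}_{p_0, Q, B_R, \delta}(D)$ is not equicontinuous at $p_0$. Then there are $\varepsilon_0>0,$ maps $f_j\in\frak{R}_{p_0, Q, B_R, \delta}(D)$ and points $p_j\to p_0$ with $d^{\,\prime}(f_j(p_j), f_j(p_0))\geqslant\varepsilon_0.$ Fix $r_0>0$ so small that $\overline{\widetilde{B}}(p_0, r_0)\subset D$; for all large $j$ we have $p_j\in\widetilde{B}(p_0, r_j)$ with $r_j:=\widetilde{h}(p_j, p_0)\to 0.$ Since ${\Bbb B}^n/G$ is a manifold by Proposition~\ref{pr1A}, we may join $p_0$ and $p_j$ by a path $\sigma_j$ contained in $\widetilde{B}(p_0, r_j);$ as $f_j$ is a homeomorphism, $F_j:=f_j(\sigma_j)$ is a continuum in $B_R$ with $\operatorname{diam}F_j\geqslant\varepsilon_0.$ Together with the omitted continuum $K_{f_j}\subset B_R\setminus f_j(D),$ $\operatorname{diam}K_{f_j}\geqslant\delta,$ we thus dispose of two continua lying in the bounded set $\overline{B_R}.$

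First I would establish the lower bound. Set $\Delta_j=\Gamma(F_j, K_{f_j}, B_R).$ Because ${\Bbb B}^n/G_*$ is $n$-Ahlfors regular and supports a $(1; n)$-Poincar\'e inequality, it is an $n$-Loewner space; since $F_j$ and $K_{f_j}$ lie in the bounded set $\overline{B_R}$ and have diameters bounded below by $\min\{\varepsilon_0,\delta\},$ their relative distance is bounded above uniformly in $j,$ so the Loewner estimate yields $M(\Delta_j)\geqslant c_0>0$ with $c_0$ independent of $j.$ Every $\beta\in\Delta_j$ starts on $F_j\subset f_j(D)$ and ends on $K_{f_j}\subset B_R\setminus f_j(D),$ hence its maximal initial arc lying in $f_j(D)$ joins $F_j$ to $\partial f_j(D);$ its $f_j^{\,-1}$-image is a path in $D$ issuing from $\sigma_j$ and tending to $\partial D.$ Consequently $\Delta_j$ is minorized by $f_j(\Gamma_j),$ where $\Gamma_j=\Gamma(\sigma_j,\partial D, D),$ and therefore $M(f_j(\Gamma_j))\geqslant M(\Delta_j)\geqslant c_0.$

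Next comes the upper bound. Applying Corollary~\ref{cor3} (the case $\widetilde{m}=1$) together with $K_I(p, f_j)\leqslant Q(p),$ we get $M(f_j(\Gamma_j))\leqslant\int\limits_D Q(p)\rho^n(p)\,d\widetilde{h}(p)$ for every $\rho\in{\rm adm\,}\Gamma_j.$ Each $\gamma\in\Gamma_j$ meets both $\widetilde{S}(p_0, r_j)$ and $\widetilde{S}(p_0, r_0),$ so with $\psi(t)=1/\bigl(t\log(1/t)\bigr)$ and $I_j=\int_{r_j}^{r_0}\psi(t)\,dt\to\infty,$ the function $\rho(p)=\psi(\widetilde{h}(p, p_0))/I_j$ on the ring $A(p_0, r_j, r_0)$ (and $0$ elsewhere) is admissible for $\Gamma_j$ exactly as in the derivation of~(\ref{eq16A}). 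By Lemma~\ref{lem3} and Remark~\ref{rem1} the space ${\Bbb B}^n/G$ is locally Ahlfors regular, so the hypothesis $Q\in FMO(p_0)$ gives the standard ring estimate $\int_{A(p_0, r_j, r_0)}Q(p)\psi^n(\widetilde{h}(p, p_0))\,d\widetilde{h}(p)=O(I_j),$ whence $M(f_j(\Gamma_j))\leqslant C/I_j^{\,n-1}\to 0.$ This contradicts $M(f_j(\Gamma_j))\geqslant c_0>0,$ and equicontinuity at $p_0$ follows.

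The main obstacle is the lower bound of the second paragraph: one must convert, uniformly in $j,$ the geometric data (an omitted continuum $K_{f_j}$ of diameter $\geqslant\delta$ and an image continuum $F_j$ of diameter $\geqslant\varepsilon_0$) into a strictly positive modulus. This is precisely where $n$-Ahlfors regularity and the $(1; n)$-Poincar\'e inequality are used, through the Loewner property of ${\Bbb B}^n/G_*.$ The two delicate points are the uniform control of the relative distance of $F_j$ and $K_{f_j}$ inside $\overline{B_R},$ and the minorization of $\Delta_j$ by $f_j(\Gamma_j)$ via the first-exit arcs; the upper bound, by contrast, is a routine transcription to the factor space of the familiar $FMO$ ring estimate once the local Ahlfors regularity of the source is invoked.
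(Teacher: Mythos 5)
Your argument is correct in substance, but it is organized quite differently from the paper's. The paper's proof of Theorem~\ref{theor4*!} is a pure reduction: it notes that every $f\in\frak{R}_{p_0, Q, B_R, \delta}(D)$ satisfies the modulus inequality of Theorem~\ref{th1} (Corollary~\ref{cor3}) with majorant $Q$, recalls from the proof of Theorem~\ref{th3} the local structure facts (admissibility of the radial function, local Ahlfors regularity of ${\Bbb B}^n/G$ via Lemma~\ref{lem3}, existence of liftings), and then invokes the ready-made equicontinuity theorem \cite[Theorem~1]{Sev} for maps with such a ring estimate in metric measure spaces. You instead reprove that cited theorem in the present setting: the contradiction scheme, the Loewner lower bound for the family joining the image continuum $F_j$ to the omitted continuum $K_{f_j},$ the minorization of that family by $f_j(\Gamma_j)$ via first-exit arcs, and the $FMO$ ring estimate forcing $M(f_j(\Gamma_j))\rightarrow 0.$ This is exactly the mechanism hidden inside the citation, so your route makes the result self-contained within the paper, at the price of importing the Loewner theorem for $n$-Ahlfors regular spaces with a $(1;n)$-Poincar\'{e} inequality from \cite{He}; the paper's route is shorter but delegates all of this to an external reference.

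Two points in your write-up need small repairs. First, you assert the Loewner lower bound for $\Delta_j=\Gamma(F_j, K_{f_j}, B_R),$ i.e.\ for paths confined to $B_R$; but the Loewner property of ${\Bbb B}^n/G_*$ bounds from below the modulus of the joining family in the \emph{whole} space, and restricting the paths to $B_R$ can only decrease the modulus, so the bound $M(\Delta_j)\geqslant c_0$ does not follow as written (a ball in a Loewner space need not itself be a Loewner domain). The fix costs nothing: take $\Delta_j=\Gamma(F_j, K_{f_j}, {\Bbb B}^n/G_*).$ Your minorization argument is insensitive to where the paths run, since any path from $F_j$ to $K_{f_j}$ must leave $f_j(D),$ and only its initial arc inside $f_j(D)$ is used. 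Second, $p_j$ lies on the sphere $\widetilde{S}(p_0, r_j)$ rather than in the open ball $\widetilde{B}(p_0, r_j),$ so $\sigma_j$ should be taken in a slightly larger ball, say $\widetilde{B}(p_0, 2r_j)$; path connectedness of small balls follows from Proposition~\ref{pr1A} and Lemma~\ref{lem6}. With these adjustments, and the standard treatment of the half-open preimage arcs tending to $\partial D$ (they still cross the ring $A(p_0, r_j, r_0),$ which is all your admissible $\rho$ requires), the proof is complete.
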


\medskip
{\it Proof} of Theorem~\ref{theor4*!} follows from Theorem~\ref{th1}
and~\cite[Theorem~1]{Sev} together with remarks made in the proof of
Theorem~\ref{th3}.~$\Box$

\medskip
{\bf Example.} As a simple illustration of Theorem~\ref{theor4*!},
consider the following family of mappings. Let $\alpha\geqslant 1$
be a fixed number. Based on the homeomorphism
$h(x)=\frac{x}{|x|}\exp\{\log^{\,\alpha}\frac{1}{|x|}\},$ we put
$$
h_m(x)=\left\{
\begin{array}{rr}
\frac{e\cdot x}{((m-1)/m)\cdot\exp\left\{\log^{\,\alpha}\left(\frac{e}{(m-1)/m}\right)\right\}}, & x\in {\Bbb B}^n\cap B(0, (m-1)/m),\\
\frac{e\cdot
x}{|x|\exp\left\{\log^{\,\alpha}\left(\frac{e}{|x|}\right)\right\}},
&  x\in {\Bbb B}^n\setminus B(0, (m-1)/m)
\end{array}
\right.\,,
$$
where ${\Bbb B}^n=\{x\in {\Bbb R}^n: |x|<1\}.$ It is not difficult
to see that mappings $h_m$ are~$W_{\rm loc}^{1, n}({\Bbb
B}^n)$-homeomorphisms such that~$h_m^{\,-1}\in W_{\rm loc}^{1,
n}({\Bbb B}^n)$ and, therefore, are differentiable almost everywhere
mappings with $ACP^{\,-1},$ $N$- and $N^{\,-1}$-Luzin properties in
${\Bbb B}^n$ (see~\cite[Theorem~28.2]{Va$_3$},
\cite[Corollary~B]{MM} and~\cite[Lemma~3]{Va$_1$}). Using reasoning
given at considering of~\cite[Proposition~6.3]{MRSY$_2$}, it can be
shown that
$K_I(x, h_m)\leqslant \alpha\cdot\log^{\alpha-1}\frac{e}{|x|}$ for
almost all $x\in {\Bbb B}^n.$ If $\alpha=2,$ then $K_I(x,
h_m)\leqslant 2\cdot Q(x),$ where $Q(x):=\log\frac{e}{|x|}\in
FMO(B(0, r_0^{\,\prime}))$ (see~\cite[p.~5]{RR}), where the latter
should be understood with respect to the Euclidean metric and
Lebesgue measure. For a hyperbolic metric and measure, this
statement is also true, since the hyperbolic and Euclidean metrics
are equivalent on compact sets (see Lemma~\ref{lem3}).

Suppose that $G$ is a group of M\"{o}bius transformations of the
unit ball ${\Bbb B}^n,$ $n\geqslant 2,$ onto itself, acting
discontinuously and not having fixed points in ${\Bbb B}^n.$ Suppose
also that ${\Bbb B}^n/G$ is $n$-Ahlfors regular space with $(1;
n)$-Poincar\'{e} inequality. Let $\pi:{\Bbb B}^n\rightarrow {\Bbb
D}/G$ be the natural projection of ${\Bbb B}^n$ onto ${\Bbb B}^n/G,$
and let $p_0\in {\Bbb B}^n/G$ be such that $\pi(0)=p_0.$ Let $r_0>0$
be the radius of a ball with center at a point~$p_0,$ entirely lying
in some normal neighborhood $U$ of $p_0.$ Then by the definition of
the natural projection $\pi,$ as well as the definition of the
hyperbolic metric~$h$ and the metrics~$\widetilde{h}$ in~(\ref{eq2})
we have, that $\pi(B(0, r^{\,\prime}_0))=\widetilde{B}(p_0, r_0),$
where $r^{\,\prime}_0:=(e^{r_0}-1)(e^{r_0}+1).$ In this case, the
family of mappings
$$
\widetilde{g}_m(y)=\left\{
\begin{array}{rr}
\frac{e\cdot
y}{((m-1)/m)\cdot\exp\left\{\log^{\,\alpha}\left(\frac{e}{(m-1)/m}\right)\right\}},
& y\in B(0, r^{\,\prime}_0)\cap B(0, (r^{\,\prime}_0(m-1))/m),\\
\frac{r^{\,\prime}_0\cdot e\cdot
y}{|y|\exp\left\{\log^{\,\alpha}\left(\frac{er^{\,\prime}_0}{|y|}\right)\right\}}\,,
& y\in B(0, r^{\,\prime}_0)\setminus B(0, (r^{\,\prime}_0(m-1))/m)
\end{array}
\right.
$$
is a family of automorphisms of $B(0, r^{\,\prime}_0),$ while
$\widetilde{g}_m(y)=y$ for $y\in S(0, r^{\,\prime}_0).$ On the other
hand, by construction, there exists a continuous inverse mapping
$\varphi=\pi^{\,-1}$ of $\widetilde{B}(p_0, r_0)$ onto a Euclidean
ball $B(0, r^{\,\prime}_0).$ In this case, we set
$$g_m(p)=(\widetilde{g}_m\circ \varphi)(p), \qquad p\in \widetilde{B}(p_0, r_0)\,.$$
Note that the mappings $g_m$ are almost everywhere differentiable
homeomorphisms of $\widetilde{B}(p_0, r_0)$ onto $B(0,
r^{\,\prime}_0),$ which belong to $ACP^{\,-1}$-class and have $N$-
and $N^{\,-1}$-Luzin properties in ${\Bbb B}^n/G.$ Wherein,
$g_m(p)=\varphi(p)$ for $p\in
\partial\widetilde{B}(p_0, r_0).$
Next we set
$$f_m(p)=\left\{
\begin{array}{rr} (\pi\circ\widetilde{g}_m\circ \varphi)(p), & p\in \widetilde{B}(p_0, r_0)\,,\\
p, &  p\in {\Bbb B}^n/G\setminus \widetilde{B}(p_0, r_0)
\end{array}
\right.\,.$$
The definition of the mappings~$f_m$ implies that $f_m$ are
differentiable almost everywhere homeomorphisms of ${\Bbb B}^n/G$
onto itself, which belong to $ACP^{\,-1}$-class and have $N$- and
$N^{\,-1}$-Luzin properties in ${\Bbb B}^n/G$ (see Figure~\ref{fig3}
for illustration).
\begin{figure}[h]
\centerline{\includegraphics[scale=0.5]{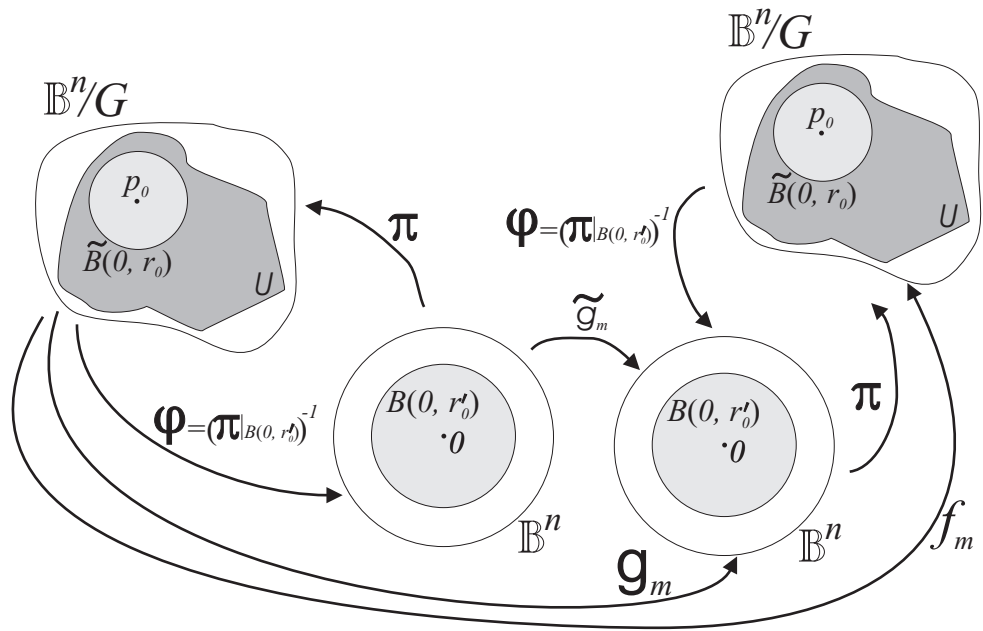}} \caption{The
scheme used to construct the mappings~$f_m$ }\label{fig3}
\end{figure}
We can verify directly that the family of mappings $f_m$ is
equicontinuous at the point~$p_0.$ The last conclusion also follows
from Theorem~\ref{theor4*!}. Notice, that all the conditions of this
theorem are fulfilled, except that the family of mappings $f_m$ acts
into some ball $B_R\subset {\Bbb B}^n/G$ and does not take values
from some continuum $K_m,$ belonging to this ball and having a
diameter no less than $\delta>0,$ $m=1,2,\ldots .$

\medskip
In order for this last condition to be satisfied, we consider the
restriction $f_m|_{\widetilde{B}(p_0, r_0)}$ of this family of
mappings $f_m$ to the ball $\widetilde{B}(p_0, r_0).$ Without loss
of generality, we may assume that the closed ball
$\widetilde{B}(p_0, 2r_0)$ still lies in some normal neighborhood
$U$ of $p_0$ and does not coincide with the whole space ${\Bbb
B}^n/G.$ (Simply, we can initially choose $r_0$ appropriately).
Choose a continuum $K\subset B(0, (2r_0)^{\,\prime})\setminus B(0,
r^{\,\prime}_0)$ in an arbitrary way, where
$(2r_0)^{\,\prime}=\frac{e^{(2r_0)^{\,\prime}}-1}{e^{(2r_0)^{\,\prime}}+1}.$
Then the family $f_m|_{\widetilde{B}(p_0, r_0)}$ satisfies all the
conditions of Theorem~\ref{theor4*!}, in particular, the mappings of
this family do not take on the values of some fixed non-degenerate
continuum~$\pi(K)\subset {\Bbb B}^n/G.$


\medskip
\medskip
{\bf \noindent Evgeny Sevost'yanov} \\
{\bf 1.} Zhytomyr Ivan Franko State University,  \\
40 Bol'shaya Berdichevskaya Str., 10 008  Zhytomyr, UKRAINE \\
{\bf 2.} Institute of Applied Mathematics and Mechanics\\
of NAS of Ukraine, \\
1 Dobrovol'skogo Str., 84 100 Slavyansk,  UKRAINE\\
esevostyanov2009@gmail.com

\end{document}